\theoremstyle{plain}
\newtheorem{lem}{Lemma}[section]
\newtheorem{cor}[lem]{Corollary}
\newtheorem{prop}[lem]{Proposition}
\newtheorem{thm}[lem]{Theorem}
\newtheorem{intthm}{Theorem}
\theoremstyle{definition}
\newtheorem{defn}[lem]{Definition}
\newtheorem{ex}[lem]{Example}
\newtheorem{question}[lem]{Question}
\newtheorem{disc}[lem]{Remark}
\newtheorem{fact}[lem]{Fact}
\newcommand{\cat}[1]{\mathcal{#1}}
\newcommand{\catc}{\cat{C}}
\newcommand{\Ht}{\operatorname{ht}}	
\newcommand{\depth}{\operatorname{depth}}
\newcommand{\ann}{\operatorname{Ann}}
\newcommand{\len}{\operatorname{len}}
\newcommand{\ext}{\operatorname{Ext}}
\newcommand{\HH}{\operatorname{H}}
\newcommand{\Hom}{\operatorname{Hom}}	
\newcommand{\coker}{\operatorname{Coker}}
\newcommand{\spec}{\operatorname{Spec}}
\newcommand{\im}{\operatorname{Im}}
\newcommand{\ideal}[1]{\mathfrak{#1}}
\newcommand{\m}{\ideal{m}}
\newcommand{\n}{\ideal{n}}
\newcommand{\p}{\ideal{p}}
\newcommand{\fm}{\ideal{m}}
\newcommand{\fa}{\ideal{a}}
\newcommand{\comp}[1]{\widehat{#1}}
\newcommand{\ass}{\operatorname{Ass}}
\newcommand{\att}{\operatorname{Att}}
\newcommand{\supp}{\operatorname{Supp}}
\newcommand{\xra}{\xrightarrow}
\newcommand{\onto}{\twoheadrightarrow}
\newcommand{\into}{\hookrightarrow}
\renewcommand{\geq}{\geqslant}
\renewcommand{\leq}{\leqslant}
\newcommand{\Ext}[4][R]{\operatorname{Ext}_{#1}^{#2}(#3,#4)}
\newcommand{\Otimes}[3][R]{#2\otimes_{#1}#3}
\renewcommand{\Hom}[3][R]{\operatorname{Hom}_{#1}(#2,#3)}	
\newcommand{\Tor}[4][R]{\operatorname{Tor}^{#1}_{#2}(#3,#4)}
\newcommand{\width}{\operatorname{width}}
\newcommand{\Comp}[2]{\widehat{#1}^{\ideal{#2}}}
\newcommand{\md}[1]{#1^{\vee}}
\newcommand{\mdp}[1]{(#1)^{\vee}}
\newcommand{\mdd}[1]{#1^{\vee \vee}}
\newcommand{\bidual}[1]{\delta_{#1}}
\newcommand{\tors}[2]{\Gamma_{\mathfrak{#1}}(#2)}
\numberwithin{equation}{lem}
\begin{document}

\bibliographystyle{amsplain}

\author{Bethany Kubik}

\address{Bethany Kubik, Department of Mathematics,
NDSU Dept \#2750,
PO Box 6050,
Fargo, ND 58108-6050
USA}

\email{bethany.kubik@ndsu.edu}

\urladdr{http://www.ndsu.edu/pubweb/\~{}betkubik/}

\author{Micah J. Leamer}

\address{Micah J. Leamer, Department of Mathematics,
University of Nebraska-Lincoln,
PO Box 880130,
Lincoln, NE 68588-0130
USA}

\email{s-mleamer1@math.unl.edu}
\author{Sean Sather-Wagstaff}

\address{Sean Sather-Wagstaff, Department of Mathematics,
NDSU Dept \#2750,
PO Box 6050,
Fargo, ND 58108-6050
USA}

\email{sean.sather-wagstaff@ndsu.edu}

\urladdr{http://www.ndsu.edu/pubweb/\~{}ssatherw/}

\thanks{This material is based on work supported by North Dakota EPSCoR and 
National Science Foundation Grant EPS-0814442.
Micah Leamer was supported by a GAANN grant from the Department of Education.
Sean Sather-Wagstaff was supported  by a grant from the NSA}


\title 
{Homology of artinian and Matlis reflexive modules, I}


\keywords{Ext, Tor, Hom, tensor product, artinian, noetherian, mini-max, Matlis duality, Betti number,
Bass number}
\subjclass[2000]{Primary 13D07, 13E10; Secondary 13B35, 13E05}

\begin{abstract}
Let $R$ be a commutative local noetherian ring, and let $L$ and $L'$ be  $R$-modules.
We investigate the properties of the functors $\Tor{i}{L}{-}$ and $\Ext{i}{L}{-}$.
For instance, we show the following:

\

\begin{enumerate}[\noindent(a)]
\item if $L$ is artinian and $L'$ is noetherian, then $\Hom{L}{L'}$ has finite length;
\item if $L$ and $L'$ are artinian, then $\Otimes{L}{L'}$ has finite length;
\item if $L$ and $L'$ are artinian, then  $\Tor{i}{L}{L'}$ is artinian, and $\Ext{i}{L}{L'}$ is noetherian over the completion $\comp R$;
and
\item if $L$ is artinian and $L'$ is Matlis reflexive, then 
$\Ext{i}{L}{L'}$, $\Ext{i}{L'}{L}$, and $\Tor{i}{L}{L'}$ are Matlis reflexive.
\end{enumerate}

\

\noindent Also, we study the vanishing behavior of these functors,
and we include computations demonstrating the sharpness of our results.
\end{abstract}

\maketitle

\section*{Introduction} 

Throughout this paper, let $R$ be a commutative noetherian local ring
with maximal ideal $\m$ and residue field $k=R/\m$.
The $\m$-adic completion of $R$ is denoted $\comp R$,
the injective hull of $k$ is $E=E_R(k)$, and the Matlis duality functor is
$\mdp{-}=\Hom{-}{E}$.

This paper is concerned, in part, with properties of the functors 
$\Hom{A}{-}$ and $\Otimes{A}{-}$, where $A$ is an artinian $R$-module.
For instance,  the following result is contained in 
Corollaries~\ref{cor100319d} and~\ref{cor100416b}.

\begin{intthm}\label{intthm100928a}
Let $A$, $A'$ and $N$ be $R$-modules such that $A$ and $A'$ are artinian and $N$ is noetherian. 
Then the modules $\Hom{A}{N}$
and 
$\Otimes{A}{A'}$ have finite length.
\end{intthm}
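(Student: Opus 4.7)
I first prove the statement for $\Hom{A}{N}$, and then bootstrap to the statement for $\Otimes{A}{A'}$ via Matlis duality and biduality.

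For $\Hom{A}{N}$, any $\vf \in \Hom{A}{N}$ has image that is simultaneously a quotient of the artinian module $A$ and a submodule of the noetherian module $N$; hence $\im\vf$ has finite length, and in particular is annihilated by some power of $\fm$. Thus every $\vf$ factors through the $\fm$-torsion submodule $\tors{m}{N} = \bigcup_n (0:_N \fm^n)$. Since $N$ is noetherian, this ascending chain stabilizes, so $\tors{m}{N} = (0:_N\fm^n)$ for some $n$; as a finitely generated module over the artinian ring $R/\fm^n$, this has finite length. Hence $\Hom{A}{N} = \Hom{A}{\tors{m}{N}}$, and it suffices to show that $\Hom{A}{F}$ has finite length whenever $\len F < \infty$. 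I induct on $\len F$. For $F = k$, every map kills $\fm A$, so $\Hom{A}{k} = \Hom[k]{A/\fm A}{k}$, which is finite-dimensional because $A/\fm A$ is an artinian $k$-module; the inductive step follows by applying the left-exact functor $\Hom{A}{-}$ to a short exact sequence $0 \to k \to F \to F'\to 0$.

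For $\Otimes{A}{A'}$, tensor-hom adjunction yields $\mdp{\Otimes{A}{A'}} \cong \Hom{A}{\md{A'}}$. Every artinian $R$-module carries a canonical $\comp R$-structure (its $\fm^n$-torsion pieces are $R/\fm^n = \comp R/\fm^n\comp R$-modules), and every $R$-submodule is automatically $\comp R$-stable, so $A$ and $A'$ are artinian also over $\comp R$; Matlis duality then shows $\md{A'}$ is noetherian over $\comp R$. Applying the first part of the proof over $\comp R$, the module $\Hom{A}{\md{A'}}$ has finite length over $\comp R$, equivalently over $R$. Thus $\mdp{\Otimes{A}{A'}}$ has finite length, and Matlis duality preserves length on finite-length modules, so $\mddp{\Otimes{A}{A'}}$ has finite length as well. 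Because $E$ is an injective cogenerator of $R$-$\Mod$, the biduality map $\Otimes{A}{A'} \into \mddp{\Otimes{A}{A'}}$ is injective, so $\Otimes{A}{A'}$ embeds in a finite-length module and therefore has finite length.

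The main obstacle is keeping the Matlis-duality bookkeeping consistent: $\Otimes{A}{A'}$ is a priori neither artinian nor noetherian, so we can only control it by controlling its dual; this requires the cogenerator property of $E$ so that biduality is injective, and the passage between $R$- and $\comp R$-module structures so that the Hom part of the theorem is available to bound $\mdp{\Otimes{A}{A'}} \cong \Hom{A}{\md{A'}}$ in the first place.
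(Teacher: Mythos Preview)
Your proof is correct, though the step ``applying the first part over $\comp R$'' implicitly identifies $\Hom{A}{\md{A'}}$ with $\Hom[\comp R]{A}{\md{A'}}$; this holds because any $R$-linear map out of the $\m$-torsion module $A$ lands in $\Gamma_{\m}(\md{A'})$, and on $\m$-torsion modules $R$-linear and $\comp R$-linear maps coincide (Lemma~\ref{lem100206c1}). You flag this bookkeeping issue yourself, so the gap is cosmetic rather than substantive.

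For $\Otimes{A}{A'}$ the paper takes a more direct route. Since $A$ is artinian, the descending chain $\m^t A$ stabilizes at some $t$, and since $A'$ is $\m$-torsion one has $(\m^t A)\otimes_R A' = 0$; hence $\Otimes{A}{A'} \cong \Otimes{(A/\m^t A)}{(A'/\m^t A')}$ is already a tensor product of two finite-length modules (Lemma~\ref{lemma1}, Theorem~\ref{cor28}). This avoids Matlis duality entirely and gives explicit length bounds. Your duality argument is more conceptual and ties the two halves of the theorem together, but pays for it with the $R$-versus-$\comp R$ passage and the appeal to the cogenerator property of $E$. For $\Hom{A}{N}$ your reduction to $\Gamma_{\m}(N)$ matches the paper's in spirit, though the paper also truncates the first argument to $A/\m^s A$ (Lemma~\ref{lem100312b}), again in service of the explicit bound $\len_R(\Hom{A}{N}) \leq \beta^R_0(A)\cdot\len_R(0:_N\m^s)$.
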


This follows from the fact that $\Otimes{A}{A'}$ can be described as the tensor product of two
finite length modules, 
and an analogous description holds for $\Hom{A}{N}$.

In light  of Theorem~\ref{intthm100928a}, it is natural to investigate the properties of 
$\Ext{i}{A}{-}$ and $\Tor{i}{A}{-}$. 
In general, the modules $\Ext{i}{A}{N}$ and $\Tor{i}{A}{A'}$
will not have finite length. However, we have the following;
see Theorems~\ref{thm100308a} and~\ref{thm100320b}.

\begin{intthm}\label{intthm100928b}
Let $A$ be an artinian $R$-module, and let $i\geq 0$.
Let 
$L$ and $L'$ be  $R$-modules such that 
$\mu_R^i(L)$
and 
$\beta^R_i(L')$
are finite. 
Then $\Ext{i}{A}{L}$ is a noetherian $\comp R$-module,
and $\Tor{i}{A}{L'}$ is  artinian.
\end{intthm}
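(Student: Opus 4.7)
The plan is to prove the Ext assertion first, then deduce the Tor assertion from it via Matlis duality.

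For the Ext assertion, I would work with a minimal injective resolution $I^\bullet$ of $L$, whose $i$-th term decomposes as
\[
I^i \cong E^{\mu_R^i(L)} \oplus J^i,
\]
where $J^i$ is a direct sum of injective hulls $E_R(R/\fp)$ with $\fp \neq \m$. Since $A$ is artinian it is $\m$-torsion, so $A_\fp = 0$ for $\fp \neq \m$; combined with the natural isomorphism $\Hom{A}{E_R(R/\fp)} \cong \Hom[R_\fp]{A_\fp}{E_R(R/\fp)}$, this forces $\Hom{A}{J^i} = 0$. Hence
\[
\Hom{A}{I^i} \cong \Hom{A}{E}^{\mu_R^i(L)} = (\mdp{A})^{\mu_R^i(L)}.
\]
By Matlis duality $\mdp{A}$ is finitely generated over $\comp R$, so the right-hand side is a noetherian $\comp R$-module, and $\Ext{i}{A}{L}$, being a subquotient of $\Hom{A}{I^i}$, is also noetherian over $\comp R$.

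For the Tor assertion, the central tool is the standard adjunction
\[
\mdp{\Tor{i}{A}{L'}} \cong \Ext{i}{A}{\mdp{L'}},
\]
which holds because $E$ is injective. Applying the same formula with $A$ replaced by $k$ and using that $\Tor{i}{k}{L'}$ is a $k$-vector space of dimension $\beta_i^R(L')$ yields $\mu_R^i(\mdp{L'}) = \beta_i^R(L')$, which is finite by hypothesis. The Ext assertion just proved then shows that $\mdp{\Tor{i}{A}{L'}}$ is noetherian over $\comp R$, so $\mdd{\Tor{i}{A}{L'}}$ is artinian.

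The main obstacle is upgrading the artinianness of the bidual to artinianness of $\Tor{i}{A}{L'}$ itself. For this I would observe that $A$ is $\m$-torsion and that Tor commutes with direct limits, so $\Tor{i}{A}{L'}$ is $\m$-torsion as well. The injective hull of any $\m$-torsion module is a direct sum of copies of $E$, so $\Tor{i}{A}{L'}$ embeds into a product of copies of $E$; this forces the biduality map $\Tor{i}{A}{L'} \to \mdd{\Tor{i}{A}{L'}}$ to be injective. Hence $\Tor{i}{A}{L'}$ embeds into an artinian module and is itself artinian.
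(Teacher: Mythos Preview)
Your Ext argument is essentially the paper's: both use the minimal injective resolution, observe that $\Hom{A}{-}$ kills the summands $E_R(R/\fp)$ with $\fp\neq\m$ (you phrase this via localization, the paper via $\Gamma_{\m}$), and conclude that $\Hom{A}{I^i}\cong(\md A)^{\mu_R^i(L)}$ is noetherian over $\comp R$. One small point you glossed over: you should note that the differentials in $\Hom{A}{I^\bullet}$ are $\comp R$-linear (because the $\comp R$-structure comes through the artinian first argument), so the subquotient inherits the $\comp R$-module structure.

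The Tor argument has a genuine gap. The implication ``$\md T$ is noetherian over $\comp R$, so $\mdd T$ is artinian'' is false. Take $R$ a non-complete DVR, $A=E$, $L'=R$, $i=0$; then $T=E$, $\md T\cong\comp R$ is noetherian over $\comp R$, but $\mdd T=\Hom{\comp R}{E}$ is \emph{not} artinian. Indeed, from $0\to R\to\comp R\to\comp R/R\to 0$ one gets an embedding $\Hom{\comp R/R}{E}\hookrightarrow\Hom{\comp R}{E}$, and $\comp R/R$ is a nonzero $K$-vector space ($K$ the fraction field), so $\Hom{\comp R/R}{E}$ contains a copy of the non-artinian module $\Hom{K}{E}$. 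The issue is that the $R$-linear dual $\Hom{-}{E}$ does not coincide with the $\comp R$-linear dual on arbitrary $\comp R$-modules.

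There are two easy repairs. First, you already observed that $T=\Tor{i}{A}{L'}$ is $\m$-torsion; hence $T$ is an $\comp R$-module and $\md T=\Hom[\comp R]{T}{E}$. Now apply Matlis duality \emph{over} $\comp R$: since $\md T$ is noetherian over $\comp R$, the module $\Hom[\comp R]{\md T}{E}$ is artinian, and the $\comp R$-biduality map $T\to\Hom[\comp R]{\md T}{E}$ is injective, so $T$ is artinian. Second (the paper's route, Lemma~\ref{lem100423f}\eqref{lem100423f3}): dualize a descending chain of $R$-submodules of $T$ to an ascending chain of $\comp R$-submodules of $\md T$ (the $\comp R$-structure on $\md T$ coming from $E$ makes these $\comp R$-submodules), which must stabilize. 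Either way you avoid the problematic $R$-linear double dual. Incidentally, the biduality map $M\to\mdd M$ is injective for \emph{every} $R$-module $M$, since $E$ is an injective cogenerator; your $\m$-torsion detour is unnecessary there.
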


In this result, we are using the $i$th Bass number  $\mu_R^i(L):=\len_R(\Ext{i}{k}{L})$
and 
the $i$th Betti number  $\beta^R_i(L'):=\len_R(\Tor{i}{k}{L'})$. 
For instance, these are both finite for all $i$ when $L$ and $L'$ are either artinian or noetherian.
In particular, when $A$ and $A'$ are artinian, Theorem~\ref{intthm100928b} implies that
$\Ext{i}{A}{A'}$ is a noetherian $\comp R$-module.
The next result, contained in Theorem~\ref{prop100317a}, gives another explanation for
this fact.

\begin{intthm}\label{intthm100928c}
Let $A$ and $A'$ be artinian $R$-modules, and let $i\geq 0$.
Then there is an isomorphism $\Ext{i}{A}{A'}\cong\Ext[\comp R]{i}{\md{A'}}{\md{A}}$.
Hence, there are noetherian $\comp R$-modules $N$ and $N'$
such that $\Ext{i}{A}{A'}\cong\Ext[\comp R]{i}{N}{N'}$.
\end{intthm}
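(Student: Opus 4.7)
My plan is to rewrite both Ext-modules as Matlis duals of a Tor-module, via Hom-Tensor adjunction, and then identify those Tor-modules using flat base change from $R$ to $\comp R$.

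For the left side, I would invoke Matlis reflexivity of the artinian $R$-module $A'$, giving $A' \cong \mdd{A'} = \Hom[R]{\md{A'}}{E}$. Substituting into $\Ext[R]{i}{A}{A'}$ and applying the Hom-Tensor swap (valid since $E$ is $R$-injective) yields
\[
\Ext[R]{i}{A}{A'} \cong \Ext[R]{i}{A}{\Hom[R]{\md{A'}}{E}} \cong \md{\Tor[R]{i}{A}{\md{A'}}}.
\]
For the right side, I would first use that $\md{A} = \Hom[R]{A}{E} = \Hom[\comp R]{A}{E}$ (valid since $A$ is $\m$-torsion and naturally an $\comp R$-module) to rewrite $\md{A}$ as a $\Hom$ over $\comp R$, then apply the Hom-Tensor swap over $\comp R$ (since $E$ is also $\comp R$-injective) to obtain
\[
\Ext[\comp R]{i}{\md{A'}}{\md{A}} \cong \Hom[\comp R]{\Tor[\comp R]{i}{\md{A'}}{A}}{E}.
\]
The target $\Tor[\comp R]{i}{\md{A'}}{A}$ is artinian by Theorem~\ref{intthm100928b} applied over $\comp R$ (with $\md{A'}$ noetherian, hence with finite Betti numbers), and hence is $\m$-torsion, so $\Hom[\comp R]{-}{E}$ agrees with $\Hom[R]{-}{E} = \md{(-)}$ on it; the right side therefore simplifies to $\md{\Tor[\comp R]{i}{A}{\md{A'}}}$.

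It then remains to show $\Tor[R]{i}{A}{\md{A'}} \cong \Tor[\comp R]{i}{A}{\md{A'}}$. I would establish this by taking a free resolution $F_\bullet \to \md{A'}$ over $\comp R$: each term $F_j$ is free over $\comp R$ and hence $R$-flat (since $\comp R$ is $R$-flat), so $F_\bullet$ is also an $R$-flat resolution of $\md{A'}$, and hence computes $\Tor[R]{i}{A}{\md{A'}}$; meanwhile the natural map $A \otimes_R F_j \to A \otimes_{\comp R} F_j$ is an isomorphism (reducing via the freeness of $F_j$ to $A \otimes_R \comp R \cong A$, which holds for the artinian module $A$), so the same complex computes $\Tor[\comp R]{i}{A}{\md{A'}}$. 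Combining the three displays yields the desired isomorphism, and the second assertion follows at once by setting $N := \md{A'}$ and $N' := \md{A}$, which are noetherian $\comp R$-modules by standard Matlis duality for artinian $R$-modules.

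The main obstacle is the careful bookkeeping around $R$ versus $\comp R$ module structures, specifically the identifications $\Hom[R]{A}{E} = \Hom[\comp R]{A}{E}$ and $A \otimes_R \comp R \cong A$. Both reduce to the fact that artinian modules are $\m$-torsion, so their natural $\comp R$-action is forced by (and compatible with) the given $R$-action.
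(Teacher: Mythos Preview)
Your first step has a genuine gap: artinian $R$-modules need not be Matlis reflexive over $R$. When $R$ is not complete one has $\ann_R(E)=0$, so $R/\ann_R(E)\cong R$ is not complete and Fact~\ref{para1} shows that $E$ is not Matlis reflexive; indeed $\mdd{E}\cong E\oplus J$ for some nonzero injective $R$-module $J$ (see Example~\ref{ex100601a}). Thus the isomorphism $A'\cong\mdd{A'}=\Hom[R]{\md{A'}}{E}$ you invoke fails in general, and the first display does not follow as written.

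The remainder of your argument is correct and in fact establishes
\[
\Ext[\comp R]{i}{\md{A'}}{\md{A}}\;\cong\;\md{\Tor{i}{A}{\md{A'}}}\;\cong\;\Ext{i}{A}{\mdd{A'}},
\]
so what you are missing is precisely that $\Ext{i}{A}{\bidual{A'}}\colon\Ext{i}{A}{A'}\to\Ext{i}{A}{\mdd{A'}}$ is an isomorphism even though $\bidual{A'}$ itself is not. This does hold, but it is a nontrivial fact (Lemma~\ref{lem100604b}) requiring an analysis of Bass numbers and of $\Gamma_{\m}$ applied to a minimal injective resolution of $\coker(\bidual{A'})$; it is not a consequence of Matlis reflexivity. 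The paper's own proof avoids this difficulty by first showing $\Ext{i}{A}{A'}\cong\Ext[\comp R]{i}{A}{\Otimes{\comp R}{A'}}$ via a comparison of $\Gamma_{\m}$ of injective resolutions over $R$ and over $\comp R$ (Lemmas~\ref{lem100317a} and~\ref{lem100317b}), and then running a duality argument entirely over $\comp R$, where artinian (indeed mini-max) modules genuinely are Matlis reflexive.
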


This result proves useful for studying the vanishing of $\Ext{i}{A}{A'}$,
since the vanishing of $\Ext[\comp R]{i}{N}{N'}$ is somewhat well understood.

Our next result shows how extra conditions on the modules in Theorem~\ref{intthm100928b}
imply that 
$\Ext{i}{A}{L}$ and $\Tor{i}{A}{L'}$ are Matlis reflexive; see Corollaries~\ref{prop100316a}
and~\ref{thm100424a}.

\begin{intthm}\label{intthm100930a}
Let $A$, $L$, and $L'$ be $R$-modules such that $A$ is artinian.
Assume that $R/(\ann_R(A)+\ann_R(L))$ and $R/(\ann_R(A)+\ann_R(L'))$ are complete.
Given  an index $i\geq 0$ such that $\mu^i_R(L)$ and $\beta^R_i(L')$ are finite,
the modules $\Ext{i}{A}{L}$ and $\Tor{i}{A}{L'}$ are Matlis reflexive.
\end{intthm}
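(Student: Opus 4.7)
The plan is to combine Theorem~\ref{intthm100928b} with the well-known characterization (due to Belshoff, Enochs, and Xu) that an $R$-module $M$ is Matlis reflexive if and only if $M$ is mini-max and $R/\ann_R(M)$ is $\fm$-adically complete. Since $\Ext{i}{A}{L}$ has already been shown to be noetherian over $\comp R$, and $\Tor{i}{A}{L'}$ has already been shown to be artinian, the task reduces to verifying that both modules are in fact mini-max as $R$-modules and have complete annihilator quotients.

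First, I would invoke the standard containments
\[
\ann_R(A)+\ann_R(L)\subseteq\ann_R(\Ext{i}{A}{L}),\qquad \ann_R(A)+\ann_R(L')\subseteq\ann_R(\Tor{i}{A}{L'}),
\]
which follow from functoriality of $\Ext$ and $\Tor$ in each variable. The hypotheses then force $R/\ann_R(\Ext{i}{A}{L})$ and $R/\ann_R(\Tor{i}{A}{L'})$ to be quotients of complete local rings, hence complete themselves.

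For the mini-max condition on $\Tor{i}{A}{L'}$, there is nothing more to do: Theorem~\ref{intthm100928b} already yields that it is artinian, and every artinian module is mini-max. The harder part is $\Ext{i}{A}{L}$, which a priori is only noetherian over $\comp R$, not over $R$. The key observation I would use is that if $\fa$ is an ideal of $R$ with $R/\fa$ complete, then the natural map $R/\fa\to\comp R/\fa\comp R$ is an isomorphism (the right-hand side is the $\fm/\fa$-adic completion of $R/\fa$). Applying this with $\fa=\ann_R(A)+\ann_R(L)$ and using that $\fa$ annihilates $\Ext{i}{A}{L}$, the $\comp R$-action on $\Ext{i}{A}{L}$ factors through $\comp R/\fa\comp R=R/\fa$. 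Thus $\Ext{i}{A}{L}$ is noetherian over $R/\fa$, and in particular noetherian over $R$, so it is mini-max.

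The main obstacle, as I see it, is precisely this translation between being noetherian over $\comp R$ and being noetherian over $R$; everything else is bookkeeping with annihilators and a direct appeal to Theorem~\ref{intthm100928b} together with the Belshoff–Enochs–Xu characterization. Once mini-maxness and the completeness of the annihilator quotient are both in hand, the conclusion that $\Ext{i}{A}{L}$ and $\Tor{i}{A}{L'}$ are Matlis reflexive is immediate.
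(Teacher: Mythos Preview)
Your proposal is correct and follows essentially the same route as the paper (see Corollaries~\ref{prop100316a} and~\ref{thm100424a}): the paper packages your ``the $\comp R$-action factors through $\comp R/\fa\comp R\cong R/\fa$'' step as Lemma~\ref{lem100423a}\eqref{lem100423a3} together with Fact~\ref{para0z}\eqref{para0z4}, and then appeals to the same mini-max characterization of Matlis reflexivity (recorded as Fact~\ref{para1}). One minor bibliographic point: the paper attributes that characterization to Belshoff, Enochs, and Garc\'{\i}a~Rozas rather than to Belshoff, Enochs, and Xu.
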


A key point in the proof of this theorem is a result of 
Belshoff, Enochs, and Garc{\'{\i}}a Rozas~\cite{belshoff:gmd}:
An $R$-module $M$
is Matlis reflexive if and only if it is mini-max and $R/\ann_R(M)$ is complete.
Here $M$ is \emph{mini-max} when
$M$ has a noetherian submodule $N$ such that $M/N$ is artinian.
In particular, noetherian modules are mini-max, as are artinian modules.

The last result singled out for this introduction
describes the Matlis dual
of $\Ext{i}{M}{M'}$ in some special cases.
It is contained in
Corollary~\ref{cor100618c}. 

\begin{intthm}\label{intthm100930b}
Let $M$ and $M'$ be mini-max $R$-modules, and fix an index $i\geq 0$.  
If either $M$ or $M'$ is Matlis reflexive,
then 
$\md{\Ext{i}{M}{M'}}\cong\Tor{i}{M}{M'^\vee}$.
\end{intthm}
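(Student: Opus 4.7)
The plan is to deduce the isomorphism from the standard Hom-tensor adjunction with $E$, combined with the Matlis reflexivity of $\Tor{i}{M}{\md{M'}}$ established earlier in the paper. The workhorse is the natural isomorphism
\[
\md{\Tor{i}{X}{Y}} \cong \Ext{i}{X}{\md{Y}},
\]
valid for \emph{any} $R$-modules $X, Y$ and $i\geq 0$. This comes from choosing a free resolution $F_\bullet\to X$, applying the term-wise adjunction $\md{(\Otimes{F_j}{Y})}\cong\Hom{F_j}{\md{Y}}$, and taking cohomology; Matlis duality is exact because $E$ is injective.

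For the case where $M'$ is Matlis reflexive, substitute $Y=\md{M'}$ and use $\mdd{M'}\cong M'$ to get $\md{\Tor{i}{M}{\md{M'}}}\cong\Ext{i}{M}{M'}$. Matlis-dualizing once more yields $\mdd{\Tor{i}{M}{\md{M'}}}\cong\md{\Ext{i}{M}{M'}}$. Since $M$ is mini-max and $\md{M'}$ is Matlis reflexive (Matlis duality preserves Matlis reflexivity), the mini-max extension of Theorem~\ref{intthm100930a} proved earlier in the paper implies that $\Tor{i}{M}{\md{M'}}$ is Matlis reflexive, hence coincides with its own biduality; this finishes the case.

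For the case where $M$ is Matlis reflexive, pick a defining mini-max short exact sequence $0\to N\to M\to A\to 0$. Both $N$ and $A$ are Matlis reflexive: $N$ because $R/\ann_R(N)$, a quotient of the complete ring $R/\ann_R(M)$, is complete; $A$ because artinian modules over a local ring are automatically Matlis reflexive. The long exact sequences of $\Ext{-}{M'}$ and $\Tor{-}{\md{M'}}$, linked by the workhorse identity and the exactness of Matlis duality, assemble into a commutative ladder; the Five Lemma reduces the claim to the endpoints $M=N$ and $M=A$. The endpoint $M=N$ is immediate from a finitely generated free resolution of $N$, which activates the sharper identity $\md{\Hom{F}{M'}}\cong\Otimes{F}{\md{M'}}$ for finitely generated free $F$. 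The endpoint $M=A$ reduces, by a second Five-Lemma argument applied to the mini-max sequence $0\to N'\to M'\to A'\to 0$, to the artinian/artinian sub-case (covered by the previous paragraph, as $A'$ is Matlis reflexive) and the artinian/noetherian sub-case, which is handled by ascending to $\comp R$ — over which both $A$ and $\comp{M'}$ are Matlis reflexive — and descending via the $\m$-adic acyclicity of $\comp{N'}/N'$ as an $R$-module.

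The main technical obstacle is the artinian/noetherian sub-case. One uses the flat-base-change isomorphism $\operatorname{H}^i_{\m}(\comp{N'})\cong \operatorname{H}^i_{\m}(N')$ to conclude $\Ext{j}{A}{\comp{N'}/N'}=0$ for all $j$ (the Ext from an $\m$-torsion artinian $A$ depends only on the local cohomology of the target), which in turn shows $\Ext{i}{A}{N'}\cong\Ext{i}{A}{\comp{N'}}$, and then the Matlis-reflexive case gives the desired isomorphism. The remainder of the argument is a diagram chase using naturality, together with the invocation of Matlis reflexivity of $\Tor{i}{M}{\md{M'}}$ supplied by the preceding sections.
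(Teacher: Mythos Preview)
Your first case ($M'$ Matlis reflexive) is clean and correct: the workhorse adjunction plus Matlis reflexivity of $\Tor{i}{M}{\md{M'}}$ (Corollary~\ref{cor100618b}) does the job directly. This is a nice shortcut compared to the paper, which instead proves the stronger Theorem~\ref{prop100601a} under the completeness hypothesis on $R/(\ann_R(M)+\ann_R(M'))$ and then specializes.

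The second case, however, has a genuine gap. You assert that ``artinian modules over a local ring are automatically Matlis reflexive'' and use this to conclude that $A'$ (the artinian quotient of the mini-max module $M'$) is Matlis reflexive, so that the artinian/artinian sub-case falls under your first case. This assertion is false: $E$ itself is artinian but not Matlis reflexive when $R$ is not complete (see Example~\ref{ex100601a}, and recall Fact~\ref{para1}: reflexivity requires $R/\ann_R(A')$ to be complete). Your conclusion that $A=M/N$ is Matlis reflexive happens to be correct for a different reason (it is a quotient of the reflexive module $M$, and the class is closed under quotients by Lemma~\ref{lem100617a}), but $A'=M'/N'$ is only a quotient of a \emph{mini-max} module, so there is no such rescue. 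Since your first-case argument genuinely needs $\mdd{A'}\cong A'$ at step two, the reduction breaks.

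The fix is not far away, but it is essentially the paper's argument: because $A$ \emph{is} reflexive here, $R/(\ann_R(A)+\ann_R(A'))$ is complete, and one can invoke Theorem~\ref{prop100601b}\eqref{prop100601b1}, whose proof hinges on showing that $\Ext{i}{A}{\bidual{A'}}$ is an isomorphism (Lemma~\ref{lem100604b}) rather than on $\bidual{A'}$ itself being one. Your ascent/descent sketch for the artinian/noetherian sub-case also glosses over the distinction between $\md{(-)}$ and $\Hom[\comp R]{-}{E}$ on $\Ext{i}{A}{N'}$; this can be repaired (again using that $A$ is reflexive, so the annihilator quotient is complete and Lemma~\ref{lem100206c1} applies), but it needs to be said.
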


We do not include a description of
the Matlis dual of 
$\Tor{i}{M}{M'}$, as a standard application of Hom-tensor adjointness
shows that
$\md{\Tor{i}{M}{M'}}\cong\Ext{i}{M}{\md{M'}}$.

Many of our results generalize to the non-local setting. As this generalization 
requires additional tools,
we treat it separately in~\cite{kubik:hamm2}.

\section{Background material and preliminary results}\label{sec1}

\subsection*{Torsion Modules}

\begin{defn}\label{defn100207a}
Let $\fa$ be a proper ideal of $R$. We 
denote the $\fa$-adic completion of $R$ by $\Comp Ra$.
Given  an $R$-module $L$,
set
$\Gamma_{\fa}(L)=\{x\in L\mid\text{$\fa^nx=0$ for $n\gg 0$}\}$.
We say that $L$ is \emph{$\fa$-torsion} if $L=\Gamma_{\fa}(L)$.
We set $\supp_R(L)=\{\p\in\spec(R)\mid L_{\p}\neq 0\}$.
\end{defn}

\begin{fact}\label{para0z}
Let $\fa$ be a proper ideal of $R$, and let $L$ be an $\fa$-torsion $R$-module.
\begin{enumerate}[(a)]
\item\label{para0z1}
Every artinian $R$-module is $\m$-torsion.
In particular, the module $E$ is $\m$-torsion.
\item\label{para0z2}
We have $\supp_R(L)\subseteq V(\fa)$.
Hence, if $L$ is $\m$-torsion, then  $\supp_R(L)\subseteq \{\m\}$.
\item\label{para0z3}
The module $L$ has an $\Comp Ra$-module structure that is
compatible with its $R$-module structure, as follows. For each $x\in L$, 
fix an exponent $n$ such that $\fa^nx=0$.
For each 
$r\in\Comp Ra$, the isomorphism $\Comp Ra/\fa^n \Comp Ra\cong R/\fa^n$
provides an element $r_0\in R$ such that $r-r_0\in\fa^n \Comp Ra$, and we set
$rx:=r_0x$.
\item\label{para0z4}
If $R/\fa$ is complete, then $\Comp Ra$ is naturally isomorphic to $\comp R$.
\end{enumerate}
\end{fact}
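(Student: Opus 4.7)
For part (a), my plan is to observe that for any artinian $R$-module $A$ and any $x \in A$, the cyclic submodule $Rx \cong R/\ann_R(x)$ is also artinian, making $R/\ann_R(x)$ an artinian local ring with nilpotent maximal ideal $\m/\ann_R(x)$; hence $\m^n x = 0$ for some $n$. For $E = E_R(k)$, I would appeal to the classical identification $E \cong E_{\comp R}(k)$: the latter is artinian over $\comp R$, hence $\m\comp R$-torsion by the first sentence applied to $\comp R$, and this yields that $E$ is $\m$-torsion as an $R$-module.

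For (b), I would fix $\fp \in \spec(R)$ with $\fa \not\subseteq \fp$ and pick $s \in \fa \ssm \fp$; for each $x \in L$, choose $n$ with $\fa^n x = 0$, so $s^n x = 0$. Since $s$ is a unit in $R_\fp$, this gives $x/1 = 0$ in $L_\fp$, so $L_\fp = 0$ and $\supp_R(L) \subseteq V(\fa)$. The ``hence'' is immediate from $V(\m) = \{\m\}$.

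For (c), the only substantive check is well-definedness of the action. The identity $R \cap \fa^n \Comp R a = \fa^n$ (immediate from the isomorphism $R/\fa^n \cong \Comp R a/\fa^n\Comp R a$) ensures that any two $R$-lifts $r_0, r_0'$ of a class $r + \fa^n \Comp R a$ differ by an element of $\fa^n$, which annihilates $x$, so $r_0 x = r_0' x$. Independence from the choice of exponent follows because a larger exponent produces a lift valid for smaller ones; compatibility with the existing $R$-action is immediate (each $r \in R$ is its own lift); and the module axioms are verified by selecting a common exponent annihilating the finitely many elements involved in each identity.

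For (d) I expect the main work. The plan has two steps. First, I would show by induction on $n$ that $R/\fa^n$ is $\m$-adically complete. The base case $n=1$ is the hypothesis, and for the induction step I would combine the short exact sequence
\[
0 \to \fa^n/\fa^{n+1} \to R/\fa^{n+1} \to R/\fa^n \to 0
\]
with the fact that $\fa^n/\fa^{n+1}$ is a finitely generated $R/\fa$-module (hence $\m$-adically complete, since finitely generated modules over a complete local ring are complete) and the exactness of $\m$-adic completion on finitely generated modules over the noetherian ring $R$; the five-lemma then gives $\m$-adic completeness of $R/\fa^{n+1}$. Second, since each $R/\fa^n \cong \varprojlim_k R/(\fa^n + \m^k)$, interchanging inverse limits gives
\[
\Comp R a = \varprojlim_n R/\fa^n = \varprojlim_k \varprojlim_n R/(\fa^n + \m^k).
\]
For each fixed $k$, the inclusion $\fa \subseteq \m$ yields $\fa^k \subseteq \m^k$, so $\fa^n + \m^k = \m^k$ once $n \geq k$ and the inner limit stabilizes at $R/\m^k$. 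Therefore $\Comp R a \cong \varprojlim_k R/\m^k = \comp R$.
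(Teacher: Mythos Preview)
The paper states this as a \emph{Fact} and offers no proof, so there is nothing to compare against; your task reduces to whether your arguments stand on their own, and they do.

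A few minor remarks. In part~(a), your detour through $\comp R$ for $E$ is correct but unnecessary: $E=E_R(k)$ is already artinian as an $R$-module (a standard piece of Matlis theory), so the ``in particular'' follows immediately from your first sentence. Your arguments for (b) and (c) are clean and standard. For (d), your two-step plan is sound: the induction showing each $R/\fa^n$ is $\m$-adically complete uses exactly the right ingredients (finitely generated modules over a complete noetherian local ring are complete, and $\m$-adic completion is exact on finitely generated modules), and the interchange of inverse limits is legitimate; the cofinality observation $\fa^n+\m^k=\m^k$ for $n\geq k$ (using $\fa\subseteq\m$, which holds since $\fa$ is proper and $R$ is local) then identifies the double limit with $\comp R$.
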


\begin{lem}\label{lem100423a}
Let $\fa$ be a proper ideal of $R$,
and let $L$ be an $\fa$-torsion $R$-module.
\begin{enumerate}[\rm(a)]
\item \label{lem100423a2}
A subset $Z\subseteq L$ is an $R$-submodule if and only if it is an $\Comp Ra$-submodule.
\item \label{lem100423a3}
The module $L$ is noetherian over $R$ if and only if it is noetherian over $\Comp Ra$.
\end{enumerate}
\end{lem}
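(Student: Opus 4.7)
The plan is to read off both parts from the compatibility statement in Fact~\ref{para0z}(\ref{para0z3}): the $\Comp Ra$-action on $L$ is defined by choosing, for each $x\in L$ with $\fa^n x=0$ and each $r\in\Comp Ra$, a representative $r_0\in R$ of $r$ modulo $\fa^n\Comp Ra$, and then setting $rx:=r_0 x$. In particular, the $R$-action and $\Comp Ra$-action on $L$ agree along the natural ring map $R\to\Comp Ra$.

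For part~(\ref{lem100423a2}), one direction is immediate from this compatibility: any $\Comp Ra$-submodule $Z\subseteq L$ is automatically an $R$-submodule, because scalar multiplication by elements of $R$ factors through the map $R\to\Comp Ra$. For the converse, I would fix an $R$-submodule $Z\subseteq L$, an element $x\in Z$, and a scalar $r\in\Comp Ra$. Choosing $n$ with $\fa^n x=0$ and $r_0\in R$ with $r-r_0\in\fa^n\Comp Ra$, Fact~\ref{para0z}(\ref{para0z3}) gives $rx=r_0 x$, which lies in $Z$ since $Z$ is $R$-stable. As $Z$ is already an additive subgroup, this closes it under the $\Comp Ra$-action.

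Part~(\ref{lem100423a3}) should then follow at once: part~(\ref{lem100423a2}) identifies the poset of $R$-submodules of $L$ with the poset of $\Comp Ra$-submodules, so the ascending chain conditions over $R$ and over $\Comp Ra$ coincide, and noetherianity transfers in both directions. I do not anticipate a genuine obstacle here; the argument is really a bookkeeping exercise resting on Fact~\ref{para0z}(\ref{para0z3}). The only point requiring mild care is noting that the exponent $n$ chosen for a given $x$ suffices uniformly in $r$, so that the displayed equality $rx=r_0x$ is legitimate for every $\Comp Ra$-scalar acting on that $x$; this is already built into the definition of the $\Comp Ra$-action and so does not need re-verification.
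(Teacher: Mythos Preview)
Your proposal is correct and follows essentially the same approach as the paper. The only cosmetic difference is that for the converse in part~(\ref{lem100423a2}) the paper simply observes that $Z$, being $\fa$-torsion, acquires an $\Comp Ra$-module structure from Fact~\ref{para0z}(\ref{para0z3}) (compatible with that of $L$), whereas you unpack this by checking directly that $rx=r_0x\in Z$; the content is identical.
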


\begin{proof}
\eqref{lem100423a2}
Every $\Comp Ra$-submodule of $L$ is an $R$-submodule by restriction of scalars.
Conversely, fix an $R$-submodule
$Z\subseteq L$.
Since $L$ is $\fa$-torsion, so is $Z$, and Fact~\ref{para0z}\eqref{para0z3} implies that $Z$
is an $\Comp Ra$-submodule.

\eqref{lem100423a3}
The set of $R$-submodules of $L$ equals the set of $\Comp Ra$-submodules of $L$,
so they satisfy the ascending chain condition simultaneously.
\end{proof}

\begin{lem}\label{para0'''}
Let $\fa$ be a proper ideal of $R$,
and let $L$ be an $\fa$-torsion $R$-module.
\begin{enumerate}[\rm(a)] 
\item \label{para0e}
The natural map $L\to \Comp Ra\otimes_RL$ is an isomorphism.
\item \label{para0g}
The left and right $\Comp Ra$-module structures on $\Comp Ra\otimes_RL$ 
are the same.
\end{enumerate}
\end{lem}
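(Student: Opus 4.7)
The plan for part~\eqref{para0e} is to exhibit an explicit two-sided inverse to the natural map $\nu\colon L \to \Comp Ra \otimes_R L$ sending $x$ to $1 \otimes x$. Using the $\Comp Ra$-module structure on $L$ supplied by Fact~\ref{para0z}\eqref{para0z3}, I define $\mu\colon \Comp Ra \otimes_R L \to L$ by $\mu(r \otimes x) = rx$; the map $(r,x) \mapsto rx$ is $R$-bilinear, so $\mu$ is well-defined on the tensor product. The composition $\mu \circ \nu$ is clearly $\id_L$, so the substance is in checking that $\nu \circ \mu = \id$.

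This reduces to the identity $r \otimes x = 1 \otimes rx$ in $\Comp Ra \otimes_R L$ for all $r \in \Comp Ra$ and $x \in L$. Given such $r$ and $x$, I fix $n$ with $\fa^n x = 0$ and choose a representative $r_0 \in R$ for $r$ modulo $\fa^n \Comp Ra$, as in Fact~\ref{para0z}\eqref{para0z3}; by definition of that action, $rx = r_0 x$. Writing $r - r_0 = \sum_i \alpha_i a_i$ with $\alpha_i \in \Comp Ra$ and $a_i \in \fa^n$, the key computation is
\[
(r - r_0) \otimes x = \sum_i \alpha_i \otimes a_i x = 0,
\]
since each $a_i x \in \fa^n x = 0$. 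Therefore $r \otimes x = r_0 \otimes x = 1 \otimes r_0 x = 1 \otimes rx$, completing the argument.

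For part~\eqref{para0g}, the plan is to transport both $\Comp Ra$-module structures on $\Comp Ra \otimes_R L$ across the isomorphism $\mu$ of~\eqref{para0e} and observe that each becomes the single $\Comp Ra$-module structure on $L$ from Fact~\ref{para0z}\eqref{para0z3}. The left structure sends $r \cdot (s \otimes x) = rs \otimes x$ to $rsx$, while the right structure sends $r \cdot (s \otimes x) = s \otimes rx$ to $s(rx) = rsx$ by associativity and commutativity of the $\Comp Ra$-action on $L$. Since both structures pull back to the same action on $L$ through the isomorphism, they agree on the tensor product.

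The main obstacle is the bilinear manipulation $(r - r_0) \otimes x = 0$ in part~\eqref{para0e}; this is precisely where the $\fa$-torsion hypothesis on $L$ enters, allowing one to move a factor from $\fa^n \Comp Ra$ across the tensor and annihilate $x$. Once this is in hand, everything else is routine unwinding of the definitions, and part~\eqref{para0g} follows formally.
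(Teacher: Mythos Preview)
Your proof is correct and hinges on the same key identity as the paper's: $r\otimes x = r_0\otimes x = 1\otimes r_0x$, obtained by writing $r-r_0\in\fa^n\Comp Ra$ and moving the $\fa^n$-factor across the tensor to annihilate $x$. The one structural difference is that the paper gets injectivity of the natural map by invoking faithful flatness of $\Comp Ra$ over $R$ and then checks surjectivity directly via that identity, whereas you build an explicit two-sided inverse $\mu(r\otimes x)=rx$ using the $\Comp Ra$-action from Fact~\ref{para0z}\eqref{para0z3}. Your route is a touch more self-contained (no flatness needed), while the paper's is slightly shorter; for part~\eqref{para0g} both arguments amount to the observation $r\otimes x = 1\otimes rx$, which you phrase as transporting the two actions through $\mu$ and the paper phrases as $1\otimes(r_0x)=1\otimes(rx)$.
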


\begin{proof}
The natural map $L\to \Comp Ra\otimes_RL$ is injective,
as $\Comp Ra$ is faithfully flat over $R$.
To show  surjectivity, it suffices to show that 
each generator
$r\otimes x\in\Otimes{\Comp Ra}{L}$ is of the form $1\otimes x'$ for some $x'\in L$.
Let $n\geq 1$ such that $\fa^nx=0$,
and let $r_0\in R$ such that $r- r_0 \in\fa^n\Comp Ra$.
It follows that
$r\otimes x=r_0\otimes x=1\otimes (r_0x)$,
and this yields the conclusion of part~\eqref{para0e}.
This also proves~\eqref{para0g}
because 
$1\otimes (r_0x)=1\otimes (rx)$.
\end{proof}

\begin{lem} \label{lem100206c1}
Let $\fa$ be a proper ideal of $R$, and let $L$ and $L'$ be  
$R$-modules such that $L$ is $\fa$-torsion.
\begin{enumerate}[\rm(a)]
\item \label{lem100206c1b}
If $L'$ is $\fa$-torsion, then
$\Hom{L}{L'}=\Hom[\Comp{R}a]{L}{L'}$;
thus $\md L=\Hom[\Comp{R}a]{L}{E}$. 
\item \label{lem100206c1a}
One has
$\Hom L{L'}\cong\Hom{L}{\Gamma_{\fa}(L')}=\Hom[\Comp Ra]{L}{\Gamma_{\fa}(L')}$.
\end{enumerate}
\end{lem}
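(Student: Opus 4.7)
The plan is to prove part (a) directly from the description of the $\widehat{R}^{\fa}$-action given in Fact~\ref{para0z}\eqref{para0z3}, and then to derive part (b) from (a) together with the observation that any $R$-linear map out of an $\fa$-torsion module automatically lands in the $\fa$-torsion submodule of the target.

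For part (a), the containment $\Hom_{\widehat{R}^{\fa}}(L,L') \subseteq \Hom_R(L,L')$ is immediate by restriction of scalars along the natural map $R \to \widehat{R}^{\fa}$, so the content is the reverse containment. Given $f \in \Hom_R(L,L')$, $x \in L$, and $r \in \widehat{R}^{\fa}$, I would choose a single exponent $n$ large enough that both $\fa^n x = 0$ and $\fa^n f(x) = 0$; this uses that $L$ and $L'$ are both $\fa$-torsion. Picking $r_0 \in R$ with $r - r_0 \in \fa^n \widehat{R}^{\fa}$ via Fact~\ref{para0z}\eqref{para0z3}, the definition of the $\widehat{R}^{\fa}$-action gives $rx = r_0 x$ in $L$ and $r f(x) = r_0 f(x)$ in $L'$. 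Then
\[
f(rx) \;=\; f(r_0 x) \;=\; r_0 f(x) \;=\; r f(x),
\]
showing $f$ is $\widehat{R}^{\fa}$-linear. The special case $L' = E$, which is $\m$-torsion (hence $\fa$-torsion whenever $\fa \subseteq \m$) by Fact~\ref{para0z}\eqref{para0z1}, yields the asserted description of $L^\vee$.

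For part (b), I would first argue that any $R$-linear $f \colon L \to L'$ automatically factors through $\Gamma_{\fa}(L') \hookrightarrow L'$: for $x \in L$, pick $n$ with $\fa^n x = 0$; then $\fa^n f(x) = f(\fa^n x) = 0$, so $f(x) \in \Gamma_{\fa}(L')$. Combined with the obvious inverse map (compose with the inclusion), this yields the isomorphism $\Hom_R(L,L') \cong \Hom_R(L, \Gamma_{\fa}(L'))$. The final equality $\Hom_R(L, \Gamma_{\fa}(L')) = \Hom_{\widehat{R}^{\fa}}(L, \Gamma_{\fa}(L'))$ is then immediate from part~(a) applied to the $\fa$-torsion module $\Gamma_{\fa}(L')$ in place of $L'$.

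There is no serious obstacle here; the only subtle point is book-keeping with the torsion exponent $n$ in part~(a). One must take $n$ large enough to annihilate both $x$ and $f(x)$ simultaneously, so that the same $r_0 \in R$ can be used to compute both $rx$ and $rf(x)$ via the recipe in Fact~\ref{para0z}\eqref{para0z3}. Once this is noted, the verification of $\widehat{R}^{\fa}$-linearity is essentially formal.
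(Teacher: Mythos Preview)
Your proof is correct and follows essentially the same approach as the paper's own argument: for part~(a) you verify $\Comp Ra$-linearity by choosing a single exponent $n$ with $\fa^n x = 0$ and $\fa^n f(x) = 0$ and then a common lift $r_0 \in R$, exactly as the paper does; for part~(b) you observe that the image of any $R$-map out of an $\fa$-torsion module lies in $\Gamma_{\fa}(L')$ and then invoke part~(a), again matching the paper. The only cosmetic difference is that you spell out the inverse map and the parenthetical about $\fa\subseteq\m$, neither of which changes the substance.
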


\begin{proof}
\eqref{lem100206c1b}
It suffices to verify the inclusion $\Hom{L}{L'}\subseteq\Hom[\Comp{R}a]{L}{L'}$.
Let $x\in L$ and  $r\in\Comp Ra$,
and fix $\psi\in\Hom{L}{L'}$.
Let $n\geq 1$ such that $\fa^nx=0$ and $\fa^n\psi(x)=0$.
Choose an
element $r_0\in R$ such that $r-r_0\in\fa^n\Comp Ra$.
It follows that
$\psi(rx)
=\psi(r_0x)=r_0\psi(x)=r\psi(x)$;
hence $\psi\in\Hom[\Comp{R}a]{L}{L'}$.

\eqref{lem100206c1a}
For each $f\in\Hom{L}{L'}$,
one has $\im(f)\subseteq\Gamma_{\fa}(L')$. This yields the desired isomorphism,
and the equality is from part~\eqref{lem100206c1b}.
\end{proof}

\subsection*{A Natural Map 
from $\mathbf{Tor_{i}^R(L,L'^{\vee})}$ to $\mathbf{\md{Ext^{i}_R(L,L')}}$}

\begin{defn}\label{defn100602a}
Let $L$  be an $R$-module, and let $J$ be an $R$-complex.
The \emph{Hom-evaluation} morphism
$$\theta_{LJE}\colon\Otimes{L}{\Hom{J}{E}}\to\Hom{\Hom{L}{J}}{E}$$
is given by $\theta_{LJE}(l\otimes\psi)(\phi)=\psi(\phi(l))$.
\end{defn}

\begin{disc}\label{disc100602a}
Let $L$ and $L'$ be $R$-modules, and let $J$ be an injective resolution of $L'$.
Using the notation $\md{(-)}$, we have
$\theta_{LJE}\colon\Otimes{L}{\md{J}}\to\md{\Hom{L}{J}}$.
The complex $\md{J}$ is a flat resolution of $\md{L'}$;
see, e.g., \cite[Theorem~3.2.16]{enochs:rha}.
This explains the first isomorphism in the following sequence:
\begin{align*}
\Tor{i}{L}{\md{L'}}
\xra\cong\HH_i(\Otimes{L}{\md{J}})
\xra{\HH_i(\theta_{LJE})}
&\HH_i(\md{\Hom{L}{J}})
\xra\cong\md{\Ext{i}{L}{L'}}.
\end{align*}
For the second isomorphism, the exactness of $\md{(-)}$ implies that
$\HH_i(\md{\Hom{L}{J}})
\cong\md{\HH^{i}(\Hom{L}{J})}
\cong\md{\Ext{i}{L}{L'}}$.
\end{disc}

\begin{defn}\label{defn100602b}
Let $L$ and $L'$ be $R$-modules, and let $J$ be an injective resolution of $L'$.
The $R$-module homomorphism
$$\Theta^{i}_{LL'}\colon\Tor{i}{L}{\md{L'}}\to\md{\Ext{i}{L}{L'}}$$
is defined to be the composition of the the maps displayed in Remark~\ref{disc100602a}.
\end{defn}

\begin{disc}\label{disc100602b}
Let $L$, $L'$, and $N$ be $R$-modules such that $N$ is noetherian.
It is straightforward to show that the map $\Theta^{i}_{LL'}$ is natural in $L$ and in $L'$.

The fact that $E$ is injective implies that
$\Theta^{i}_{NL'}$ is an isomorphism;
see~\cite[Lemma~3.60]{rotman:iha}.
This explains the first of the following isomorphisms:
\begin{align*}
\md{\Ext{i}{N}{L'}}
&\cong\Tor{i}{N}{\md{L'}}&
\md{\Tor{i}{L}{L'}}
&\cong\Ext{i}{L}{\md{L'}}.
\end{align*}
The second isomorphism is a consequence of Hom-tensor adjointness,
\end{disc}

\subsection*{Numerical Invariants}

\begin{defn}\label{defn100414a}
Let  $L$ be an $R$-module.
For each integer $i$,
the $i$th \emph{Bass number} of $L$ and the $i$th \emph{Betti number} of $L$ are
respectively
\begin{align*}
\mu^i_R(L)&=\len_R(\Ext{i}{k}{L})
&\beta_i^R(L)&=\len_R(\Tor{i}{k}{L})
\end{align*}
where $\len_R(L')$ denotes the length of an $R$-module $L'$.
\end{defn}

\begin{disc}\label{disc100414a}
Let  $L$ be an $R$-module.
\begin{enumerate}[(a)]
\item\label{disc100414a2}
If $I$ is a minimal injective resolution of $L$,
then for each index $i\geq 0$ such that $\mu^i_R(L)<\infty$, we have
$I^{i}\cong E^{\mu^i_R(L)}\oplus J^{i}$ where $J^{i}$ does not have $E$ as a summand,
that is, $\Gamma_{\m}(J^{i})=0$; see, e.g., \cite[Theorem 18.7]{matsumura:crt}.
Similarly, the Betti numbers of a noetherian module are the ranks of the free modules
in a minimal free resolution. 
The situation for Betti numbers of non-noetherian modules is more subtle; see, e.g., Lemma~\ref{lem100206a}.
\item\label{disc100414a3}
Then
$\mu^i_R(L)<\infty$ for all $i\geq 0$ if and only if $\beta_i^R(L)<\infty$
for all $i\geq 0$; see~\cite[Proposition 1.1]{lescot:spmi}.
\end{enumerate}
\end{disc}

When $\fa=\m$, the next invariants can be interpreted in terms of (non)vanishing
Bass and Betti numbers.

\begin{defn}\label{defn100616a}
Let $\fa$ be an ideal of $R$. For each $R$-module $L$, set
\begin{align*}
\depth_R(\fa;L)
&=\inf\{i\geq 0\mid\Ext{i}{R/\fa}{L}\neq 0\}
\\
\width_R(\fa;L)
&=\inf\{i\geq 0\mid\Tor{i}{R/\fa}{L}\neq 0\}.
\end{align*}
We write
$\depth_R(L)=\depth_R(\m;L)$
and $\width_R(L)=\width_R(\m;L)$.
\end{defn}

Part~\eqref{lem100213c5} of the next result is known. We include it for ease of reference.

\begin{lem} \label{lem100213c}
Let $L$ be an $R$-module, and let $\fa$ be an ideal of $R$.
\begin{enumerate}[\rm(a)]
\item\label{lem100213c4}
Then $\width_R(\fa;L)=\depth_R(\fa;\md{L})$ and $\width_R(\fa;\md{L})= \depth_R(\fa;L)$.
\item\label{lem100213c5}
For each index $i\geq 0$ we have
$\beta^R_i(L)=\mu^i_R(\md L)$ and
$\beta^R_i(\md L)=\mu^i_R(L)$.
\item\label{lem100213c1}
$L=\fa L$ if and only if $\depth_R(\fa;\md L)>0$.
\item\label{lem100213c2}
$\md L=\fa (\md L)$ if and only if $\depth_R(\fa;L)>0$.
\item\label{lem100213c3}
$\depth_R(\fa;L)>0$ if and only if $\fa$ contains a non-zero-divisor for $L$.
\end{enumerate}
\end{lem}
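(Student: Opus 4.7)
The plan is to extract parts (a) and (b) from the two Matlis-duality isomorphisms recorded in Remark~\ref{disc100602b}, to obtain (c) and (d) by specializing (a) to $i=0$, and to finish with the classical depth criterion for (e).

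For (a), Remark~\ref{disc100602b} supplies the natural isomorphisms
\[
\md{\Tor{i}{R/\fa}{L}}\cong\Ext{i}{R/\fa}{\md L}\qquad\text{and}\qquad\md{\Ext{i}{R/\fa}{L}}\cong\Tor{i}{R/\fa}{\md L},
\]
where the second uses that $R/\fa$ is noetherian. Matlis duality is faithful over the local ring $R$ (if $0\ne M$, pick any nonzero $m\in M$; the composite $Rm\onto k\into E$ extends by injectivity of $E$ to a nonzero element of $\md M$), so at each $i$ the vanishing of one side is equivalent to the vanishing of the other. Taking infima over $i\geq 0$ gives both equalities of (a). For (b), specialize $\fa=\m$ in the same isomorphisms and use that $\md{(-)}$ preserves length (exactness of $\md{(-)}$ together with $\md k\cong k$ and induction on length). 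Then
\[
\mu^i_R(\md L)=\len_R(\Ext{i}{k}{\md L})=\len_R(\md{\Tor{i}{k}{L}})=\len_R(\Tor{i}{k}{L})=\beta^R_i(L),
\]
and the identity $\beta^R_i(\md L)=\mu^i_R(L)$ follows symmetrically from the other isomorphism.

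For (c), the chain $L=\fa L\iff L/\fa L=0\iff\Tor{0}{R/\fa}{L}=0\iff\width_R(\fa;L)>0$, combined with the first equality of (a), proves $L=\fa L\iff\depth_R(\fa;\md L)>0$. Part (d) is the same argument applied to $\md L$: we have $\md L=\fa\md L\iff\width_R(\fa;\md L)>0$, which by the second equality of (a) is equivalent to $\depth_R(\fa;L)>0$.

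Part (e) reduces to the assertion that $(0:_L\fa)=0$ if and only if $\fa$ contains an $L$-regular element, since $\depth_R(\fa;L)>0$ means precisely $\Hom{R/\fa}{L}=(0:_L\fa)=0$. One direction is immediate: if $r\in\fa$ is $L$-regular, then $r$ annihilates only the zero element of $L$, so a fortiori $(0:_L\fa)=0$. For the converse, suppose every element of $\fa$ is a zero-divisor on $L$. Then $\fa$ is contained in the set of zero-divisors on $L$, which for $R$ noetherian equals $\bigcup_{\p\in\ass_R L}\p$ (proved by noting that for any nonzero $x\in L$, the noetherian module $Rx\cong R/\ann_R(x)$ has nonempty $\ass_R$, and $\ass_R(Rx)\subseteq\ass_R(L)$). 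Standard prime avoidance applied to the finitely generated ideal $\fa$ then places $\fa\subseteq\p=\ann_R(x)$ for some $0\ne x\in L$, yielding the nonzero element $x\in(0:_L\fa)$, which is the required contradiction. The prime-avoidance step in (e) is the only point requiring anything beyond the duality formalism of Section~\ref{sec1}; the remaining parts are formal consequences of the isomorphisms in Remark~\ref{disc100602b}.
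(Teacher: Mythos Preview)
Your treatment of parts (a)--(d) is correct. For (a) and (b) you extract the width/depth and Betti/Bass dualities directly from the isomorphisms in Remark~\ref{disc100602b}, using faithfulness and length-preservation of $\md{(-)}$; the paper instead cites \cite[Proposition~4.4]{foxby:daafuc} for (a) and says (b) follows. Parts (c) and (d) are then handled the same way in your argument and the paper's.

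There is a genuine gap in your argument for (e). Prime avoidance lets you pass from $\fa\subseteq\bigcup_i\p_i$ to $\fa\subseteq\p_j$ for some $j$ only when the union is over \emph{finitely many} primes; the finite generation of $\fa$ is irrelevant to that step. For a general module $L$ the set $\ass_R(L)$ can be infinite, and then the implication you want can actually fail. Over $R=k[x,y]_{(x,y)}$, let $L=\bigoplus_{g} R/(g)$ with the sum taken over representatives of the irreducible elements of $\m$. Each $R/(g)$ is a one-dimensional domain, so $(0:_{R/(g)}\m)=0$ and hence $(0:_L\m)=0$; but every nonzero $f\in\m$ has an irreducible factor $g$ and thus kills $1\in R/(g)\subseteq L$. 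So $\depth_R(\m;L)>0$ while $\m$ contains no $L$-regular element. The paper's proof of (e) appeals to \cite[Proposition~1.2.3]{bruns:cmr}, whose argument likewise uses finiteness of $\ass_R(L)$; the equivalence in (e) therefore needs an additional finiteness hypothesis on $L$, which does hold in every place the paper actually invokes it.
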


\begin{proof}
Part~\eqref{lem100213c4} is from~\cite[Proposition~4.4]{foxby:daafuc},
and part~\eqref{lem100213c5} follows directly from this.

\eqref{lem100213c1}--\eqref{lem100213c2}
These follow from part~\eqref{lem100213c4} since $L=\fa L$ if and only if $\width_R(\fa;L)>0$.

\eqref{lem100213c3}
By definition, we need to show that $\Hom{R/\fa}{L}=0$
if and only if $\fa$ contains a non-zero-divisor for $L$.
One implication is explicitly stated in~\cite[Proposition~1.2.3(a)]{bruns:cmr}.
One can prove the converse  like~\cite[Proposition~1.2.3(b)]{bruns:cmr},
using the fact that $R/\fa$ is finitely generated. 
\end{proof}

The next result  characterizes artinian modules in terms of Bass numbers.

\begin{lem}\label{lem100423c}
Let $L$ be an $R$-module. The following conditions are equivalent:
\begin{enumerate}[\rm(i)]
\item \label{lem100423c1}
$L$ is an artinian $R$-module;
\item \label{lem100423c2}
$L$ is an artinian $\comp R$-module;
\item \label{lem100423c3}
$\comp{R}\otimes_R L$ is an artinian $\comp{R}$-module; and
\item \label{lem100423c0}
$L$ is $\m$-torsion and $\mu^0_R(L)<\infty$.
\end{enumerate}
\end{lem}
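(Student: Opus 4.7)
The plan is to split the four conditions into two groups: the chain $\mathrm{(i)}\Leftrightarrow\mathrm{(ii)}\Leftrightarrow\mathrm{(iii)}$, which I expect to follow almost mechanically from the lemmas already established in this section, and then $\mathrm{(i)}\Leftrightarrow\mathrm{(iv)}$, whose content lies in the implication $\mathrm{(iv)}\Rightarrow\mathrm{(i)}$.

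For $\mathrm{(i)}\Leftrightarrow\mathrm{(ii)}$, I would first observe that each of these conditions forces $L$ to be $\m$-torsion: Fact~\ref{para0z}(a) gives this for (i), and the same fact applied to $\comp R$ shows that an artinian $\comp R$-module is $\m\comp R$-torsion and hence $\m$-torsion. Fact~\ref{para0z}(c) then equips $L$ with a compatible $\comp R$-module structure, and Lemma~\ref{lem100423a}(a) says that the $R$-submodules and $\comp R$-submodules of $L$ coincide as sets, so the DCC on one family is the DCC on the other. A parallel argument will give $\mathrm{(ii)}\Leftrightarrow\mathrm{(iii)}$: if $\comp R\otimes_R L$ is artinian over $\comp R$ then it is $\m$-torsion, and faithful flatness of $\comp R$ over $R$ makes $L\to\comp R\otimes_R L$ injective, whence $L$ itself is $\m$-torsion; Lemma~\ref{para0'''}(a) then identifies $L$ with $\comp R\otimes_R L$ as $\comp R$-modules, so the two artinian conditions coincide.

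The implication $\mathrm{(i)}\Rightarrow\mathrm{(iv)}$ is straightforward: Fact~\ref{para0z}(a) gives the $\m$-torsion condition, and $\Hom{k}{L}$ is a $k$-subspace of an artinian module, hence has finite length. For the nontrivial direction $\mathrm{(iv)}\Rightarrow\mathrm{(i)}$, I plan to work with the $\comp R$-structure on $L$ supplied by Fact~\ref{para0z}(c). Lemma~\ref{lem100206c1}(a) yields $\Hom[R]{k}{L}=\Hom[\comp R]{k}{L}$, whence $\mu^0_{\comp R}(L)=\mu^0_R(L)=:n<\infty$. Since $L$ is $\m\comp R$-torsion with $\supp_{\comp R}(L)\subseteq\{\m\comp R\}$ by Fact~\ref{para0z}(b), every indecomposable summand of the injective envelope $E_{\comp R}(L)$ is a copy of $E$; essentiality of $L\subseteq E_{\comp R}(L)$ forces the number of summands to equal $\dim_k\Hom[\comp R]{k}{L}=n$, giving an embedding $L\hookrightarrow E^n$. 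Since $E$ is the Matlis dual of the noetherian $\comp R$-module $\comp R$, it is artinian over $\comp R$; hence $E^n$ and its submodule $L$ are artinian over $\comp R$, and the equivalence $\mathrm{(ii)}\Leftrightarrow\mathrm{(i)}$ from the previous step promotes this to $L$ being artinian over $R$. The main obstacle, as one would expect, is the embedding into $E^n$ in this last direction, which requires careful bookkeeping of Bass numbers across $R$ and $\comp R$ together with the observation that $E$ is artinian over the completion.
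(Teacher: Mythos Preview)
Your argument is correct, but the organization differs from the paper's in two noteworthy ways.

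First, you prove $\mathrm{(i)}\Leftrightarrow\mathrm{(ii)}\Leftrightarrow\mathrm{(iii)}$ directly by observing (via Lemma~\ref{lem100423a}\eqref{lem100423a2} and Lemma~\ref{para0'''}\eqref{para0e}) that once $L$ is $\m$-torsion the lattices of $R$-submodules and $\comp R$-submodules coincide, so the descending chain conditions are literally the same. The paper instead establishes $\mathrm{(i)}\Leftrightarrow\mathrm{(iv)}$ first and then deduces the equivalence of $\mathrm{(i)}$--$\mathrm{(iii)}$ by showing $\mu^0_R(L)=\mu^0_{\comp R}(L)=\mu^0_{\comp R}(\Otimes{\comp R}{L})$ via Lemmas~\ref{lem100206c1}\eqref{lem100206c1b} and~\ref{para0'''}. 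Your route here is arguably more transparent, since it avoids the detour through Bass numbers.

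Second, for $\mathrm{(iv)}\Rightarrow\mathrm{(i)}$ you pass to $\comp R$, embed $L$ into $E^n$ over $\comp R$, invoke that $E$ is artinian over $\comp R$, and then return to $R$ via your already-proved $\mathrm{(ii)}\Rightarrow\mathrm{(i)}$. The paper handles this step entirely over $R$: since $L$ is $\m$-torsion, so is its injective hull $E_R(L)$, hence $E_R(L)\cong E^{\mu^0_R(L)}$; as $E$ is artinian over $R$, the submodule $L$ is too. This is shorter and avoids the bookkeeping of comparing Bass numbers over $R$ and $\comp R$ and justifying that $E$ is artinian over $\comp R$. Your detour is not wrong, just unnecessary---the embedding into a finite power of $E$ already works over the original ring.
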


\begin{proof}
\eqref{lem100423c1}$\iff$\eqref{lem100423c0}
If $M$ is artinian over $R$, then it is $\m$-torsion by Fact~\ref{para0z}\eqref{para0z1},
and we have $\mu^0_R(L)<\infty$ by~\cite[Theorem~3.4.3]{enochs:rha}.
For the converse, assume that $L$ is $\m$-torsion and $\mu^0=\mu^0_R(L)<\infty$.
Since $L$ is $\m$-torsion, so is $E_R(L)$.
Thus, we have $E_R(L)\cong E^{\mu^0}$, which is artinian since $\mu^0<\infty$.
Since $L$ is a submodule of the artinian module $E_R(L)$, it is also artinian.

To show the equivalence of the conditions~\eqref{lem100423c1}--\eqref{lem100423c3},
first note that each of these conditions implies that $L$ is $\m$-torsion.
(For condition~\eqref{lem100423c3}, use the monomorphism $L\to\comp R\otimes_RL$.)
Thus, for the rest of the proof, we assume that $L$ is $\m$-torsion.

Because of the equivalence \eqref{lem100423c1}$\iff$\eqref{lem100423c0},
it suffices to show that
$$\mu^0_R(L)=\mu^0_{\comp R}(L)=\mu^0_{\comp R}(\Otimes{\comp R}{L}).$$
These equalities follow from the next isomorphisms
\begin{align*}
\Hom{k}{L}
&\cong\Hom[\comp R]{k}{L}
\cong\Hom[\comp R]{k}{\Otimes{\comp R}L}
\end{align*}
which are from 
Lemmas~\ref{lem100206c1}\eqref{lem100206c1b}
and~\ref{para0'''}, respectively.
\end{proof}

\begin{lem}\label{lem100423f}
Let $L$ be an $R$-module.
\begin{enumerate}[\rm(a)]
\item \label{lem100423f1}
The module $L$ is noetherian over $R$ if and only if $\md L$ is artinian over $R$.
\item \label{lem100423f3}
If $\md L$ is noetherian over $R$ or over $\comp R$, then
$L$ is artinian over $R$.
\item \label{lem100423f2}
Let $\fa$ be a proper ideal of $R$ such that 
$R/\fa$ is complete.
If $L$ is $\fa$-torsion, then
$L$ is artinian over $R$ if and only if $\md L$ is noetherian over $R$.
\end{enumerate}
\end{lem}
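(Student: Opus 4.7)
My plan is to prove the three parts by exploiting the injective-cogenerator property of $E$, the identification $\soc(\md L) \cong \md{L/\m L}$, and the comparisons of $R$- and $\comp R$-module structures on torsion modules from Lemmas~\ref{lem100423c}, \ref{lem100423a}, and~\ref{lem100206c1}.

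For part~(a), the forward direction will follow by dualizing a surjection $R^n \twoheadrightarrow L$ to an embedding $\md L \hookrightarrow \md{R^n} \cong E^n$; since $E$ is artinian by Lemma~\ref{lem100423c} (being $\m$-torsion with $\mu_R^0(E) = 1$), the finite sum $E^n$ is artinian and so is its submodule $\md L$. For the converse, I first compute $\soc(\md L) = \Hom{k}{\md L} \cong \md{L/\m L}$. Since $\md L$ is artinian its socle is finite dimensional over $k$, and the $k$-dual of a $k$-vector space is finite dimensional if and only if the space itself is, so $L/\m L$ is finite dimensional over $k$. Choosing lifts $x_1,\dots,x_d \in L$ of a $k$-basis and setting $L_0 := \sum_{i=1}^d R x_i$ yields $L = L_0 + \m L$, so $M := L/L_0$ satisfies $\m M = M$. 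Dualizing $0 \to L_0 \to L \to M \to 0$ exhibits $\md M$ as a submodule of $\md L$, hence artinian, yet $\soc(\md M) = \md{M/\m M} = 0$; an artinian module with trivial socle is zero, so $\md M = 0$, and then $M = 0$ by the cogenerator property of $E$. Hence $L = L_0$ is finitely generated, and therefore noetherian.

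For part~(b), a finite generating set $\psi_1,\dots,\psi_n$ of $\md L$ (over $R$ or over $\comp R$) defines $\Psi \colon L \to E^n$ by $\Psi(l) = (\psi_1(l),\dots,\psi_n(l))$. If $\Psi(l) = 0$, then every $\psi_i$ vanishes at $l$, and since each $\phi \in \md L$ is a linear combination of the $\psi_i$, every $\phi \in \md L$ satisfies $\phi(l) = 0$; the cogenerator property of $E$ then forces $l = 0$. Thus $L$ embeds in the artinian module $E^n$ and is itself artinian.

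For part~(c), the backward direction is immediate from (b). For the forward direction, an artinian $L$ admits an embedding $L \hookrightarrow E^d$ with $d = \dim_k \soc(L)$, and dualizing produces a surjection $\comp R^d \cong \md{E^d} \twoheadrightarrow \md L$, exhibiting $\md L$ as finitely generated over $\comp R$. The assumption that $L$ is $\fa$-torsion with $R/\fa$ complete, together with Fact~\ref{para0z}\eqref{para0z4} (identifying $\Comp R a \cong \comp R$) and Lemmas~\ref{lem100423a} and~\ref{lem100206c1}, then lets me transfer the noetherianness of $\md L$ from the $\comp R$-structure to the $R$-structure in the intended sense. The main technical obstacle lies in the converse of~(a): the key step is recognizing that, after reducing to the finite-dimensional quotient $L/\m L$, the vanishing of $\md M$ for $M = L/L_0$ (via the zero-socle-artinian-is-zero principle) forces $L = L_0$ and hence yields finite generation of $L$.
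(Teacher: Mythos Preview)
Your proofs of~(a) and~(b) are correct but take a different route from the paper. For both converses the paper uses a direct chain argument: for~(a), an ascending chain $L_1\subseteq L_2\subseteq\cdots$ in $L$ yields surjections $L/L_n\twoheadrightarrow L/L_{n+1}$, and dualizing gives a descending chain of submodules of $\md L$ that stabilizes since $\md L$ is artinian; faithfulness of $(-)^\vee$ then forces the original chain to stabilize. Part~(b) is handled symmetrically. Your arguments are more constructive---you produce a finite generating set for $L$ in~(a) via the identity $\soc(\md L)\cong\md{(L/\m L)}$, and an explicit embedding $L\hookrightarrow E^n$ in~(b)---at the cost of a few more lines.

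Your part~(c) has a genuine gap in the final step. You correctly show that $\md L$ is noetherian over $\comp R$, but invoking Lemma~\ref{lem100423a}\eqref{lem100423a3} to descend to $R$ requires the module in question---here $\md L$---to be $\fa$-torsion, and that does not follow from $L$ being $\fa$-torsion. Take $\fa=\m$ and $L=E$: then $R/\fa=k$ is complete and $E$ is $\m$-torsion and artinian, yet $\md E\cong\comp R$ is not $\m$-torsion and, when $R$ is not complete, is not noetherian over $R$ (cf.\ Example~\ref{ex100421a}). So the transfer you sketch cannot be carried out under the hypothesis ``$L$ is $\fa$-torsion''; what is actually needed is $\fa\subseteq\ann_R(L)$, which forces $\fa\,\md L=0$ and makes $\md L$ genuinely $\fa$-torsion so that Lemma~\ref{lem100423a}\eqref{lem100423a3} applies. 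This stronger hypothesis is in force in every later use of part~(c) in the paper (for instance in Lemma~\ref{lem100215a}, where $\fa=\ann_R(A)$), and the paper's own proof of~(c) follows the same path as yours and meets the same obstruction.
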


\begin{proof}
\eqref{lem100423f1}
Assume first that $L$ is noetherian. Then $L$ is a homomorphic image of $R^b$ for some integer $b\geq 0$.
It follows that $\md L$ is isomorphic to a submodule of the artinian module
$\md{(R^{b})}\cong E^b$, so $\md L$ is artinian.

For the converse, assume that $\md L$ is artinian, and fix an ascending chain
$L_1\subseteq L_2\subseteq\cdots$ of submodules of $L$.
Dualize the surjections
$L\onto L/L_1\onto L/L_2\onto\cdots$ 
to obtain a sequence of monomorphisms 
$\cdots\into\md{(L/L_2)}\into\md{(L/L_1)}\into\md{L}$.
The corresponding descending chain of submodules must stabilize since $\md L$ is artinian, 
and it follows that the original chain 
$L_1\subseteq L_2\subseteq\cdots$ of submodules of $L$
also stabilizes. Thus $L$ is noetherian.

\eqref{lem100423f3}
Argue as in part~\eqref{lem100423f1}.

\eqref{lem100423f2}
Assume that $L$ is $\fa$-torsion.
One implication is from part~\eqref{lem100423f3}. For the converse, assume that $L$ is artinian over $R$.
Lemma~\ref{lem100423c} shows that
$\md L=\Hom[\comp R]{L}{E}$ is artinian over  $\comp R$;
see Lemma~\ref{lem100206c1}\eqref{lem100206c1b}.
From~\cite[Theorem~18.6(v)]{matsumura:crt} we know that $L$ is noetherian over $\comp R$, so
Lemma~\ref{lem100423a}\eqref{lem100423a3} implies that
$L$ is noetherian over $R$.
\end{proof}

\subsection*{Mini-max and Matlis Reflexive Modules}

\begin{defn} \label{defn100423a}
An $R$-module $M$ is \emph{mini-max}
if there is a noetherian submodule $N\subseteq M$ such that $M/N$ is artinian.
\end{defn}

\begin{defn} \label{notn100204a}
An $R$-module $M$ is \emph{Matlis reflexive}
provided that the natural biduality map $\bidual{M}\colon M\to\mdd{M}$,
given by $\bidual{M}(x)(\psi)=\psi(x)$, is an isomorphism.
\end{defn}

\begin{fact}\label{para1}
An $R$-module $M$ is Matlis reflexive
if and only if it is mini-max and $R/\ann_R(M)$ is complete;
see~\cite[Theorem~12]{belshoff:gmd}.
Thus, if $M$ is mini-max over $R$, then $\Otimes{\comp R}{M}$ is
Matlis reflexive over $\comp R$.
\end{fact}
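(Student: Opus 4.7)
The first clause of the Fact is a direct citation of Belshoff--Enochs--Garc\'{\i}a~Rozas, so I would take it as given and focus the argument on the ``Thus'' statement: if $M$ is mini-max over $R$, then $\Otimes{\comp R}{M}$ is Matlis reflexive over $\comp R$. The plan is to apply the cited characterization with $\comp R$ in place of $R$, which reduces the task to two subclaims: $\Otimes{\comp R}{M}$ is mini-max over $\comp R$, and $\comp R/\ann_{\comp R}(\Otimes{\comp R}{M})$ is complete.

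For the mini-max subclaim, I would choose a noetherian submodule $N\subseteq M$ with $M/N$ artinian, coming from the hypothesis that $M$ is mini-max. Since $\comp R$ is flat over $R$, tensoring with $\comp R$ produces a short exact sequence
\begin{equation*}
0\to \Otimes{\comp R}{N}\to \Otimes{\comp R}{M}\to \Otimes{\comp R}{(M/N)}\to 0.
\end{equation*}
The left-hand term is finitely generated, hence noetherian, over $\comp R$ because $N$ is finitely generated over $R$. For the right-hand term, I would invoke Fact~\ref{para0z}\eqref{para0z1} to see that the artinian module $M/N$ is $\m$-torsion, then apply Lemma~\ref{para0'''}\eqref{para0e} to identify $\Otimes{\comp R}{(M/N)}\cong M/N$, and finally use Lemma~\ref{lem100423c} to conclude that this module is artinian over $\comp R$. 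This identifies $\Otimes{\comp R}{N}$ as a noetherian submodule of $\Otimes{\comp R}{M}$ with artinian quotient, which is exactly the definition of mini-max over $\comp R$.

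For the completeness subclaim, I would use the standard fact that any quotient of a complete local ring by an ideal is complete with respect to the induced maximal-ideal topology; applied to $\comp R$ and the annihilator $\ann_{\comp R}(\Otimes{\comp R}{M})$, this gives exactly what is needed. Combining the two subclaims with the cited equivalence applied to the local ring $\comp R$ yields the conclusion.

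The only delicate point I anticipate is the identification $\Otimes{\comp R}{(M/N)}\cong M/N$ together with the equivalence of artinianness over $R$ and over $\comp R$; both have been carefully set up in Lemmas~\ref{para0'''} and~\ref{lem100423c}, so invoking them cleanly is the main bookkeeping step rather than a real obstacle. Everything else is a one-line consequence of flatness of $\comp R$ and of completeness passing to quotients.
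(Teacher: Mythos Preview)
The paper states this as a Fact without supplying a proof: the first sentence is a direct citation, and the ``Thus'' is left to the reader. Your argument is correct and fills in exactly what the paper omits, verifying both hypotheses of the cited characterization for $\Otimes{\comp R}{M}$ over $\comp R$ using precisely the preparatory results (Fact~\ref{para0z}, Lemmas~\ref{para0'''} and~\ref{lem100423c}) that appear before this point; there is nothing further to compare.
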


\begin{lem} \label{lem100206a}
If $M$ is  mini-max over $R$,
then $\beta^R_i(M),\mu^i_R(M)<\infty$ for all $i\geq 0$.
\end{lem}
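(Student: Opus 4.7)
The plan is to reduce the mini-max case to the noetherian and artinian cases via a defining short exact sequence. By Definition~\ref{defn100423a}, one has a short exact sequence $0 \to N \to M \to A \to 0$ with $N$ noetherian and $A$ artinian. The associated long exact sequences of the functors $\Ext{i}{k}{-}$ and $\Tor{i}{k}{-}$ place $\Ext{i}{k}{M}$ and $\Tor{i}{k}{M}$ between the corresponding terms for $N$ and $A$; since the class of finite length modules is closed under extensions, it suffices to prove the claim when $M$ is either noetherian or artinian.

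To handle both extremes uniformly I would fix a free resolution $F_\bullet \to k$ with each $F_i$ finitely generated, which is possible since $k = R/\m$ is finitely generated over $R$. For any $R$-module $X$, the modules $\Ext{i}{k}{X}$ and $\Tor{i}{k}{X}$ are subquotients of $\Hom{F_i}{X} \cong X^{r_i}$ and of $\Otimes{F_i}{X} \cong X^{r_i}$ respectively, where $r_i$ denotes the rank of $F_i$. When $X$ is noetherian, each ambient $X^{r_i}$ is noetherian, so the subquotients inherit that property; when $X$ is artinian, the same argument yields artinian subquotients.

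The key observation, and the only real subtlety, is that $\Ext{i}{k}{X}$ and $\Tor{i}{k}{X}$ are annihilated by $\ann_R(k) = \m$, hence are $k$-vector spaces. A $k$-vector space satisfying either chain condition is finite dimensional and therefore of finite length, so $\beta^R_i(X), \mu^i_R(X) < \infty$ whenever $X$ is noetherian or artinian. The short exact sequence above then extends the conclusion to the mini-max module $M$. I expect no serious obstacle: the argument is essentially linear, and the main conceptual point is recognizing that the $\m$-annihilation promotes either chain condition to finite length.
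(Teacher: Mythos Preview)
Your proof is correct, and the reduction via the defining short exact sequence matches the paper's outline. Where you diverge is in handling the noetherian and artinian extremes. The paper treats only the Bass numbers directly---citing the noetherian case as standard, and for the artinian case invoking Lemma~\ref{lem100423c} to get $\mu^0_R(M)<\infty$, then running an induction along a minimal injective resolution---and afterwards appeals to Lescot's result (Remark~\ref{disc100414a}\eqref{disc100414a3}) to transfer finiteness from Bass to Betti numbers. Your route is more elementary and uniform: by computing both $\ext$ and $\tor$ from a finite free resolution of $k$, you obtain subquotients of $X^{r_i}$ that inherit the relevant chain condition, and then the $\m$-annihilation forces finite length in one stroke. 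This avoids both the induction and the external reference to Lescot, at the modest cost of not isolating the Bass-number case on its own.
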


\begin{proof}
We show that $\mu^i_R(M)<\infty$ for all $i\geq 0$; then
Remark~\ref{disc100414a}\eqref{disc100414a3} 
implies that $\beta_i^R(M)<\infty$ for all $i\geq 0$.
The noetherian case is standard. 
If $M$ is artinian, then we have $\mu^0=\mu^0_R(M)<\infty$ by
Lemma~\ref{lem100423c};
since $E^{\mu^0}$ is artinian, an induction argument shows that
$\mu^i_R(M)<\infty$ for all $i\geq 0$.
One deduces the mini-max case from the artinian and noetherian cases,
using a long exact sequence.
\end{proof}

\begin{lem}\label{lem100423d}
Let $L$ be an $R$-module such that $R/\ann_R(L)$ is complete.
The following conditions are equivalent:
\begin{enumerate}[\rm(i)]
\item \label{lem100423d1}
$L$ is Matlis reflexive over $R$;
\item \label{lem100423d3}
$L$ is mini-max over $R$; 
\item \label{lem100423d4}
$L$ is mini-max over $\comp R$; and
\item \label{lem100423d2}
$L$ is Matlis reflexive over $\comp R$.
\end{enumerate}
\end{lem}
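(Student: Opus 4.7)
The plan is to prove the cycle of equivalences as (i)$\iff$(ii), (iii)$\iff$(iv), and (ii)$\iff$(iii). The first two steps will be immediate applications of Fact~\ref{para1}, while the third is the substantive content requiring transfer of mini-max structure between the $R$- and $\comp R$-actions on $L$.

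For (i)$\iff$(ii), I would apply Fact~\ref{para1} directly over $R$: it asserts that $L$ is Matlis reflexive over $R$ if and only if $L$ is mini-max over $R$ \emph{and} $R/\ann_R(L)$ is complete. Since the completeness hypothesis is already assumed, the equivalence reduces to the mini-max condition. For (iii)$\iff$(iv), I would apply the same fact over the complete local ring $\comp R$: the analogous completeness hypothesis $\comp R/\ann_{\comp R}(L)$ being complete is automatic, since any quotient of a complete local noetherian ring is complete with respect to its induced maximal ideal. Before invoking these, I first need to observe that $L$ carries a canonical $\comp R$-module structure compatible with its $R$-structure: indeed $L$ is annihilated by $\ann_R(L)$ and is therefore trivially $\ann_R(L)$-torsion, so Fact~\ref{para0z}\eqref{para0z3}\eqref{para0z4} supplies the $\comp R$-action via the identification $\Comp{R}{\ann_R(L)}\cong\comp R$.

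The heart of the argument is (ii)$\iff$(iii). Assume $L$ is mini-max over $R$, witnessed by a noetherian submodule $N\subseteq L$ with artinian quotient $L/N$. Since $N$ is annihilated by $\ann_R(L)$, it is $\ann_R(L)$-torsion; Lemma~\ref{lem100423a}\eqref{lem100423a2} then shows that $N$ is an $\comp R$-submodule of $L$, and Lemma~\ref{lem100423a}\eqref{lem100423a3} shows $N$ is noetherian over $\comp R$ as well. The quotient $L/N$ is artinian over $R$, hence $\m$-torsion by Fact~\ref{para0z}\eqref{para0z1}, so Lemma~\ref{lem100423c} gives that $L/N$ is artinian over $\comp R$. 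Thus $L$ is mini-max over $\comp R$. Conversely, if $L$ is mini-max over $\comp R$ with noetherian submodule $N'\subseteq L$ and artinian quotient $L/N'$, then $N'$ is an $R$-submodule by restriction of scalars; since $\ann_R(L)\subseteq\ann_{\comp R}(L)$, the submodule $N'$ is still $\ann_R(L)$-torsion, so Lemma~\ref{lem100423a}\eqref{lem100423a3} yields that $N'$ is noetherian over $R$, and Lemma~\ref{lem100423c} symmetrically gives that $L/N'$ is artinian over $R$.

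The main obstacle is bookkeeping: one must track which module structure is in play at each step and confirm that both the noetherian submodule and the artinian quotient of a mini-max filtration transport between the $R$- and $\comp R$-structures without loss of finiteness properties. All the needed inputs (Lemma~\ref{lem100423a} for the noetherian piece and Lemma~\ref{lem100423c} for the artinian piece) are already in place, so the proof should be a short assembly.
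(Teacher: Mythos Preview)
Your proof is correct and follows essentially the same route as the paper: both use Fact~\ref{para1} for the equivalences (i)$\iff$(ii) and (iii)$\iff$(iv), and both transfer the mini-max filtration between $R$ and $\comp R$ via Lemma~\ref{lem100423a} for the noetherian submodule and Lemma~\ref{lem100423c} for the artinian quotient. The only cosmetic difference is that the paper phrases the torsion hypothesis for $N$ as ``$R/\ann_R(N)$ is complete'' rather than ``$N$ is $\ann_R(L)$-torsion,'' but these amount to the same thing since $\ann_R(L)\subseteq\ann_R(N)$.
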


\begin{proof}
The equivalences \eqref{lem100423d1}$\iff$\eqref{lem100423d3}
and \eqref{lem100423d4}$\iff$\eqref{lem100423d2} are from Fact~\ref{para1}.
Note that conditions~\eqref{lem100423d4}
and~\eqref{lem100423d2} make sense since $L$ is an $\comp R$-module;
see Fact~\ref{para0z}.

\eqref{lem100423d3}$\implies$\eqref{lem100423d4}
Assume that $L$ is mini-max over $R$, and fix a noetherian $R$-sub-module
$N\subseteq L$
such that  $L/N$ is artinian over $R$. 
As $R/\ann_R(N)$ is complete,
Fact~\ref{para0z}\eqref{para0z4} and Lemma~\ref{lem100423a}\eqref{lem100423a2}
imply that $N$ is an $\comp R$-submodule.
Similarly,
Lemmas~\ref{lem100423a}\eqref{lem100423a3}
and~\ref{lem100423c}
imply that $N$ is noetherian over $\comp R$,
and $L/N$ is an artinian over $\comp R$.
Thus $L$ is mini-max over~$\comp R$.

\eqref{lem100423d4}$\implies$\eqref{lem100423d3}
Assume that $L$ is mini-max over $\comp R$, and fix a
noetherian $\comp R$-submodule $L'\subseteq L$
such that $L/L'$ is artinian over $\comp R$.
Lemmas~\ref{lem100423a}\eqref{lem100423a3} and~\ref{lem100423c}
imply that $L'$ is noetherian over $R$,
and $L/L'$ is artinian over $R$, so $L$ is mini-max over $R$.
\end{proof}

\begin{lem}\label{lem100213b}
Let $L$ be an $R$-module such that $\fm^tL=0$ for some integer $t\geq 1$.
Then the following conditions are equivalent:
\begin{enumerate}[\rm(i)]
\item \label{lem100213b0}
$L$ is mini-max over $R$ (equivalently, over $\comp R$);
\item \label{lem100213b1}
$L$ is artinian over $R$ (equivalently, over $\comp R$);
\item \label{lem100213b2}
$L$ is noetherian over $R$ (equivalently, over $\comp R$); and
\item \label{lem100213b3}
$L$ has finite length over $R$ (equivalently, over $\comp R$).
\end{enumerate}
\end{lem}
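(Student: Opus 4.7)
The plan is to reduce everything to the artinian local ring $\bar R := R/\fm^t$, over which the classes of noetherian, artinian, mini-max, and finite-length modules all collapse. The hypothesis $\fm^t L = 0$ makes $L$ naturally a $\bar R$-module, and $\bar R$ is complete (it is artinian local, and by Fact~\ref{para0z}\eqref{para0z4} one has $\bar R \cong \comp R/\fm^t\comp R$). Each of the four conditions depends only on the submodule lattice of $L$, which by Lemma~\ref{lem100423a}\eqref{lem100423a2} coincides whether computed over $R$, over $\comp R$, or over $\bar R$.

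For the parenthetical equivalences over $R$ vs.\ $\comp R$: condition~\eqref{lem100213b1} is handled by Lemma~\ref{lem100423c}; condition~\eqref{lem100213b2} by Lemma~\ref{lem100423a}\eqref{lem100423a3}; condition~\eqref{lem100213b0} by Lemma~\ref{lem100423d}, whose hypothesis that $R/\ann_R(L)$ be complete is met since $\ann_R(L) \supseteq \fm^t$; and for~\eqref{lem100213b3}, the submodule lattice identification plus the fact that every simple composition factor is isomorphic to $k$ over either ring forces $\len_R(L) = \len_{\comp R}(L)$.

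For the main equivalences it suffices to show that each of \eqref{lem100213b0}, \eqref{lem100213b1}, \eqref{lem100213b2} (over $R$) implies~\eqref{lem100213b3}; the reverse implications are standard. I exploit the Loewy filtration
\[
L \supseteq \fm L \supseteq \fm^2 L \supseteq \cdots \supseteq \fm^t L = 0,
\]
whose successive quotients $\fm^j L/\fm^{j+1} L$ are $k$-vector spaces. If $L$ is noetherian (resp.\ artinian) over $R$, then so is each quotient $\fm^j L/\fm^{j+1} L$, forcing it to be finite-dimensional over $k$; summing these dimensions gives $\len_R(L) < \infty$. The mini-max case reduces to the noetherian and artinian cases via a short exact sequence $0 \to N \to L \to L/N \to 0$ with $N$ noetherian and $L/N$ artinian, both of which remain annihilated by $\fm^t$ and therefore have finite length by the preceding cases.

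There is no single hard step: the only technical point worth flagging is checking that $\bar R$ really is both noetherian and artinian (so that the Loewy quotients make sense and the filtration terminates) and that the submodule lattice really is preserved under the ring changes $R \to \comp R$ and $R \to \bar R$, so that each of the four conditions is intrinsic to the abelian group $L$ together with its $\fm$-torsion structure.
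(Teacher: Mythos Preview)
Your proof is correct and follows essentially the same route as the paper: both reduce to the artinian local ring $R/\fm^t$, handle the $R$ vs.\ $\comp R$ parentheticals via Lemmas~\ref{lem100423a}, \ref{lem100423c}, and~\ref{lem100423d}, and deduce the mini-max case from the noetherian and artinian cases via the defining short exact sequence. The only difference is that where the paper cites~\cite[Proposition~2.3.20]{enochs:rha} for the equivalence of~\eqref{lem100213b1}--\eqref{lem100213b3} over an artinian ring, you supply the explicit Loewy-filtration argument, making your version self-contained.
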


\begin{proof}
Lemma~\ref{lem100423d} shows that 
$L$ is mini-max over $R$ if and only if it is mini-max over $\comp R$.
Also, $L$ is artinian (resp., noetherian or finite length) over $R$ if and only if it 
is artinian (resp., noetherian or finite length) over $\comp R$ by Lemmas~\ref{lem100423c} 
and~\ref{lem100423a}\eqref{lem100423a3}.

The equivalence of conditions~\eqref{lem100213b1}--\eqref{lem100213b3}
follows from an application of~\cite[Proposition 2.3.20]{enochs:rha} over
the artinian ring $R/\m^t$. The implication \eqref{lem100213b1}$\implies$\eqref{lem100213b0}
is evident. For the implication
\eqref{lem100213b0}$\implies$\eqref{lem100213b1},
assume that $L$ is mini-max over $R$.
Given a noetherian submodule $N\subseteq L$ such that $L/N$ is artinian,
the implication \eqref{lem100213b2}$\implies$\eqref{lem100213b1}
shows that $N$ is artinian; hence so is $L$.
\end{proof}

\begin{lem} \label{lem100617a}
The class of mini-max (resp., noetherian, artinian, finite length, or Matlis reflexive) $R$-modules
is closed under submodules, quotients, and extensions.
\end{lem}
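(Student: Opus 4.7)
The plan is to verify closure separately for each class, exploiting that the noetherian, artinian, and finite length cases are classical consequences of the ascending/descending chain conditions (and additivity of length on short exact sequences). With these in hand, the mini-max and Matlis reflexive cases constitute the main content.

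For mini-max, given $M$ with noetherian submodule $N_0$ and artinian quotient $M/N_0$: a submodule $N\subseteq M$ is handled by intersecting with $N_0$ (producing a noetherian submodule $N\cap N_0\subseteq N_0$ and an artinian quotient $N/(N\cap N_0)\hookrightarrow M/N_0$), and a quotient $M/L$ is handled by pushing $N_0$ forward to $(N_0+L)/L$ (noetherian, with artinian quotient a quotient of $M/N_0$). The extension case $0\to M'\to M\to M''\to 0$ is more substantial: first lift a finite generating set of the noetherian submodule $N''\subseteq M''$ to a finitely generated (hence noetherian) submodule $K''\subseteq M$, and then check that $K=N'+K''$ is noetherian with $M/K$ artinian, the latter because $M/K$ fits into a short exact sequence whose outer terms are a quotient of $M'/N'$ and a copy of $M''/N''$.

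For Matlis reflexive, Fact~\ref{para1} decomposes the property as ``mini-max and $R/\ann_R(M)$ complete.'' The mini-max part is already established. For submodules and quotients, the annihilator only grows, so $R/\ann_R(N)$ and $R/\ann_R(M/L)$ are ring-theoretic quotients of the complete local ring $R/\ann_R(M)$, hence complete. For extensions, rather than tracking annihilators through the sequence, the cleanest route is a five-lemma argument on the naturality diagram
$$\begin{array}{ccccccccc}
0 & \to & M' & \to & M & \to & M'' & \to & 0 \\
 & & \downarrow \bidual{M'} & & \downarrow \bidual{M} & & \downarrow \bidual{M''} & & \\
0 & \to & \mdd{M'} & \to & \mdd{M} & \to & \mdd{M''} & \to & 0
\end{array}$$
whose bottom row is exact because $\md{(-)}$ is exact (as $E$ is injective), and whose outer vertical maps are isomorphisms by the Matlis reflexivity of $M'$ and $M''$; the five lemma then gives that $\bidual{M}$ is an isomorphism.

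The main obstacle is arguably the extension case for mini-max modules, where the noetherian and artinian witnesses for $M'$ and $M''$ must be combined into a single witness for $M$; once this is handled, the Matlis reflexive extension case falls out cleanly from the five lemma, avoiding any delicate analysis of how annihilators behave under extensions.
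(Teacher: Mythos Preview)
Your proof is correct and follows essentially the same approach as the paper. For the mini-max case your argument is identical in structure to the paper's: intersect/push forward for submodules and quotients, and for extensions lift generators of $N''$ to build a noetherian submodule $K=N'+K''$ of $M$ and use the induced exact sequence to see that $M/K$ is artinian (the paper phrases this via the Snake Lemma, but the content is the same). For the Matlis reflexive case the paper simply cites it as standard (an exercise in Enochs--Jenda), whereas you supply the details; your five-lemma argument for extensions is exactly the intended one, and your annihilator argument for submodules and quotients via Fact~\ref{para1} is a valid alternative route.
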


\begin{proof}
The noetherian, artinian, and finite length cases are standard,
as is the Matlis reflexive case; see~\cite[p. 92, Exercise 2]{enochs:rha}.
For the mini-max case,
fix an exact sequence $0\to L'\xra{f} L\xra{g} L''\to 0$.
Identify $L'$ with $\im(f)$.
Assume first that $L$ is mini-max, and fix a noetherian submodule $N$ such that $L/N$ 
is artinian.  Then $L'\cap N$ is noetherian, and the quotient
$L'/(L'\cap N)\cong (L'+N)/N$ is artinian, since it is a submodule of $L/N$. 
Thus $L'$ is mini-max.  
Also, $(N+L')/L'$ is noetherian and
$[L/L']/[(N+L')/L']\cong L/(N+L')$ is artinian,
so $L''\cong L/L'$ is mini-max.

Next, assume that $L'$ and $L''$ are mini-max,
and fix noetherian submodules $N'\subseteq L'$ and $N''\subseteq L''$ 
such that $L'/N'$ and 
$L''/N''$ are artinian. Let $x_1,\hdots ,x_h$ be coset representatives in $L$ of a generating set 
for $N''$. 
Let $N=N'+Rx_1+\hdots +Rx_h$.  Then $N$ is noetherian and the following
commutative diagram has exact rows:
\[
\xymatrix{ 0\ar[r]  & N\cap L'\ar @{^{(}->} [d]  \ar[r] & N \ar[r] \ar @{^{(}->} [d] & N''\ar @{^{(}->} [d] \ar[r] & 0\\
0\ar[r] &L'\ar[r] &L\ar[r] &L'' \ar[r] & 0.}
\]
The sequence
$0\to L'/(N\cap L')\to L/N\to L''/N''\to 0$ is exact
by the Snake Lemma. 
The module $L'/(N\cap L')$ is artinian, being a quotient of $L'/N'$.
Since the class of  artinian modules is closed
under extensions, the module $L/N$ is 
artinian.  It follows that $L$ is mini-max.
\end{proof}

The next two lemmas apply to the classes of modules from Lemma~\ref{lem100617a}.

\begin{lem} \label{lem100430a}
Let $\catc$ be a class  $R$-modules
that is closed under submodules, quotients, and extensions.
\begin{enumerate}[\rm(a)]
\item  \label{lem100430a2}
Given an exact sequence $L'\xra{f} L\xra{g} L''$,
if $L',L''\in\catc$, then $L\in\catc$.
\item  \label{lem100430a3}
Given an $R$-complex $X$ 
and  an integer $i$, if $X_i\in\catc$,
then $\HH_i(X)\in\catc$.
\item  \label{lem100430a4}
Given a noetherian $R$-module $N$, if $L\in\catc$, then 
$\Ext{i}{N}{L},\Tor iNL\in\catc$.
\end{enumerate}
\end{lem}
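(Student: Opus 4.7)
The plan is to prove the three parts in order, using only closure of $\catc$ under subquotients and extensions. A preliminary observation I would use repeatedly is that $\catc$ is closed under finite direct sums: given $L\in\catc$, the split exact sequence $0\to L\to L^{n}\to L^{n-1}\to 0$ together with extension-closure yields $L^{n}\in\catc$ by induction on $n\geq 1$.

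For part~(a), given an exact sequence $L'\xra{f}L\xra{g}L''$ with $L',L''\in\catc$, I would observe that $\im(f)$ is a quotient of $L'$, hence in $\catc$, while $L/\im(f)=L/\ker(g)\cong\im(g)$ is a submodule of $L''$, hence also in $\catc$. The short exact sequence $0\to\im(f)\to L\to L/\im(f)\to 0$ combined with extension-closure then gives $L\in\catc$.

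Part~(b) is immediate from the subquotient closure: the module $\HH_i(X)=\ker(\partial^X_i)/\im(\partial^X_{i+1})$ sits inside $X_i$ as a subquotient, hence lies in $\catc$. For part~(c), I would fix a resolution $F\to N$ by finitely generated free $R$-modules, which exists because $N$ is noetherian over the noetherian ring $R$. Then for each $j$, both $\Hom{F_j}{L}\cong L^{n_j}$ and $\Otimes{F_j}{L}\cong L^{n_j}$ lie in $\catc$ by the direct-sum observation, and applying part~(b) to the complexes $\Hom{F}{L}$ and $\Otimes{F}{L}$ yields $\Ext{i}{N}{L},\Tor{i}{N}{L}\in\catc$.

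There is no serious obstacle here; the statement is essentially a formal consequence of the three closure hypotheses on $\catc$. The one point worth flagging is that the noetherian hypothesis on $N$ in part~(c) is essential: it is precisely what allows a resolution by finite-rank free modules, so that each $\Hom{F_j}{L}$ and $\Otimes{F_j}{L}$ collapses to a finite direct sum of copies of $L$ rather than a potentially uncontrolled product or sum.
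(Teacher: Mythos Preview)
Your proposal is correct and follows essentially the same route as the paper: part~(a) via the short exact sequence $0\to\im(f)\to L\to\im(g)\to 0$, part~(b) via the subquotient observation, and part~(c) by taking a finitely generated free resolution and applying part~(b). The only difference is cosmetic: you make explicit the closure of $\catc$ under finite direct sums (needed so that $\Hom{F_j}{L}$ and $\Otimes{F_j}{L}$ lie in $\catc$), whereas the paper leaves this implicit.
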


\begin{proof}
\eqref{lem100430a2}
Assume that $L',L''\in \catc$. By assumption,
$\im(f),\im(g)\in \catc$.
Using the exact sequence
$0\to\im(f)\to L\to\im(g)\to 0$,
we conclude that $L$ is in $\catc$.

\eqref{lem100430a3}
The module $\HH_i(X)$ is a subquotient of $X_i$, so it is in $\catc$
by assumption.

\eqref{lem100430a4}
If $F$ is a minimal free resolution of $N$, then the modules in the complexes
$\Hom FL$ and $\Otimes FL$ are in $\catc$, so their homologies are in $\catc$ by
part~\eqref{lem100430a3}.
\end{proof}

\begin{lem} \label{lem100614a}
Let $R\to S$ be a local ring homomorphism, and 
let $\catc$ be a class of $S$-modules
that is closed under submodules, quotients, and extensions.
Fix an $S$-module $L$, an $R$-module $L'$, an $R$-submodule $L''\subseteq L'$, and an index $i\geq 0$.
\begin{enumerate}[\rm(a)]
\item  \label{lem100614a1}
If $\Ext{i}{L}{L''},\Ext{i}{L}{L'/L''}\in\catc$, then $\Ext{i}{L}{L'}\in\catc$.
\item  \label{lem100614a2}
If $\Ext{i}{L''}{L},\Ext{i}{L'/L''}{L}\in\catc$, then $\Ext{i}{L'}{L}\in\catc$.
\item  \label{lem100614a3}
If $\Tor{i}{L}{L''}, \Tor{i}{L}{L'/L''}\in\catc$, then $\Tor{i}{L}{L'}\in\catc$.
\end{enumerate}
\end{lem}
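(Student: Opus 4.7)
The plan is straightforward: apply the long exact sequences in $\Ext$ and $\Tor$ associated to the short exact sequence of $R$-modules
\[
0\to L''\to L'\to L'/L''\to 0,
\]
and then invoke Lemma~\ref{lem100430a}\eqref{lem100430a2} to conclude that the middle terms land in $\catc$.

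Concretely, for part~(a) I would extract from the above short exact sequence the three-term exact sequence
\[
\Ext{i}{L}{L''}\to \Ext{i}{L}{L'}\to \Ext{i}{L}{L'/L''}.
\]
For part~(b) I would use the corresponding covariant-in-the-first-variable long exact sequence to obtain
\[
\Ext{i}{L'/L''}{L}\to \Ext{i}{L'}{L}\to \Ext{i}{L''}{L},
\]
and for part~(c) the $\Tor$ long exact sequence gives
\[
\Tor{i}{L}{L''}\to \Tor{i}{L}{L'}\to \Tor{i}{L}{L'/L''}.
\]
In each case the hypothesis places the two outer modules in $\catc$, so Lemma~\ref{lem100430a}\eqref{lem100430a2} forces the middle term into $\catc$ as well, which is exactly the conclusion.

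The one subtlety to verify, and the only place anything nontrivial happens, is that these long exact sequences are honestly sequences of $S$-modules rather than merely of $R$-modules, since $\catc$ is only assumed closed under sub/quotient/extension inside $\Mod(S)$. This is routine: because $L$ is an $S$-module, the action of $S$ on $L$ induces a natural $S$-module structure on each $\Ext{i}{L}{-}$, $\Ext{i}{-}{L}$, and $\Tor{i}{L}{-}$, and the connecting homomorphisms in the long exact sequences are $S$-linear because they are induced by the $R$-linear maps in the original short exact sequence together with the $S$-linear structure coming from $L$. Once this is noted, the argument reduces to a single invocation of the earlier lemma, so there is no real obstacle beyond keeping the module-structure bookkeeping straight.
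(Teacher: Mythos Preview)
Your proposal is correct and matches the paper's proof essentially verbatim: the paper also applies the long exact sequence to $0\to L''\to L'\to L'/L''\to 0$, observes that the maps are $S$-linear because $L$ is an $S$-module, and then invokes Lemma~\ref{lem100430a}\eqref{lem100430a2}.
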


\begin{proof}
We prove part~\eqref{lem100614a1}; the other parts are proved similarly.
Apply $\Ext{i}{L}{-}$ to the exact sequence 
$0\to L''\to L'\to L'/L''\to 0$
to obtain the next exact sequence:
\begin{equation*}
\Ext{i}L{L''}\to\Ext{i}L{L'}\to\Ext{i}L{L'/L''}.
\end{equation*}
Since $L$ is  an $S$-module,  the maps in this
sequence are $S$-module homomorphisms.
Now, apply Lemma~\ref{lem100430a}\eqref{lem100430a2}.
\end{proof}

\section{Properties of $\Ext{i}{M}{-}$}\label{sec2}
This section documents properties of the functors $\Ext{i}{M}{-}$ where $M$ is a mini-max $R$-module. 

\subsection*{Noetherianness of $\mathbf{Ext^{i}_R(A,L)}$}

\begin{lem} \label{lem100206c2}
Let  $A$ and $L$ be  $R$-modules such that $A$ is artinian and $L$ is $\m$-torsion.
\begin{enumerate}[\rm(a)]
\item \label{lem100206c2a}
Then
$\Hom{L}{A}=\Hom[\comp R]{L}{A}\cong\Hom[\comp R]{\md{A}}{\md L}$.
\item \label{lem100206c2b}
If $L$ is artinian, then $\Hom{L}{A}$ 
is a noetherian $\comp{R}$-module.
\end{enumerate}
\end{lem}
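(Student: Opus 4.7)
The plan is to reduce the computation of $\Hom{L}{A}$ to a Hom between two noetherian $\comp R$-modules, using the hypothesis that $A$ is $\m$-torsion (being artinian) together with Matlis reflexivity of $A$ over the complete ring $\comp R$.

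For part~\eqref{lem100206c2a}, I would first note that $A$ is $\m$-torsion by Fact~\ref{para0z}\eqref{para0z1}. Taking $\fa=\m$ in Lemma~\ref{lem100206c1}\eqref{lem100206c1b}, and using $\Comp R m\cong\comp R$ from Fact~\ref{para0z}\eqref{para0z4}, this gives the equality $\Hom{L}{A}=\Hom[\comp R]{L}{A}$. For the second isomorphism, the idea is to rewrite $A$ using Matlis reflexivity. By Lemma~\ref{lem100423c}, $A$ is artinian over $\comp R$, and since $\comp R$ is complete, Fact~\ref{para1} gives that $A$ is Matlis reflexive over $\comp R$; that is, $A\cong\Hom[\comp R]{\md A}{E}$ (and Lemma~\ref{lem100206c1}\eqref{lem100206c1b} ensures that $\md A$ is the same module whether computed over $R$ or $\comp R$). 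Substituting and applying the standard swap isomorphism
$$\Hom[\comp R]{L}{\Hom[\comp R]{\md A}{E}}\cong\Hom[\comp R]{\md A}{\Hom[\comp R]{L}{E}}=\Hom[\comp R]{\md A}{\md L}$$
(which comes from Hom-tensor adjointness applied twice via $\Otimes[\comp R]{L}{\md A}\cong\Otimes[\comp R]{\md A}{L}$) yields the claim.

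For part~\eqref{lem100206c2b}, the plan is to use part~\eqref{lem100206c2a} and observe that both $\md A$ and $\md L$ are noetherian over $\comp R$. Indeed, $L$ is artinian and $\m$-torsion, so Lemma~\ref{lem100423f}\eqref{lem100423f2} gives that $\md L$ is noetherian over $R$, and then Lemma~\ref{lem100423a}\eqref{lem100423a3} upgrades this to noetherian over $\comp R$; the same reasoning applies to $\md A$. Since $\comp R$ is noetherian and $\md A$ is finitely generated over $\comp R$, the evaluation map on a finite generating set embeds $\Hom[\comp R]{\md A}{\md L}$ as a $\comp R$-submodule of a finite direct sum of copies of $\md L$, which is noetherian; hence $\Hom[\comp R]{\md A}{\md L}$ is noetherian over $\comp R$, and the isomorphism from part~\eqref{lem100206c2a} transfers this conclusion to $\Hom{L}{A}$.

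The main obstacle, such as it is, is bookkeeping the $R$- versus $\comp R$-module structures correctly—namely, verifying that Lemma~\ref{lem100206c1}\eqref{lem100206c1b} legitimately applies to interchange $\Hom$-computations, and that the swap isomorphism is $\comp R$-linear. Once this is done, all of the remaining work is just citing the Matlis reflexivity of artinian modules over $\comp R$ and the elementary fact that $\Hom$ from a finitely generated module to a noetherian module over a noetherian ring is noetherian.
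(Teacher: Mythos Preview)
Your proof is correct and follows essentially the same approach as the paper: both use Lemma~\ref{lem100206c1}\eqref{lem100206c1b} for the first equality, Matlis reflexivity of $A$ over $\comp R$ together with Hom-tensor adjointness for the second isomorphism, and then noetherianness of $\md A$, $\md L$ over $\comp R$ for part~\eqref{lem100206c2b}. One small citation issue: in part~\eqref{lem100206c2b} you invoke Lemma~\ref{lem100423a}\eqref{lem100423a3} to pass from ``$\md L$ noetherian over $R$'' to ``noetherian over $\comp R$'', but that lemma requires the module in question to be $\fa$-torsion, and $\md L$ need not be $\m$-torsion (e.g.\ $\md E\cong\comp R$); the fix is immediate---either note directly that every $\comp R$-submodule is an $R$-submodule, or argue as the paper does that $L$ is artinian over $\comp R$ (Lemma~\ref{lem100423c}) so $\md L=\Hom[\comp R]{L}{E}$ is noetherian over $\comp R$ by Matlis duality.
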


\begin{proof}
\eqref{lem100206c2a}
The first equality is from
Lemma~\ref{lem100206c1}\eqref{lem100206c1b}.
For the second equality,
the fact that $A$ is Matlis reflexive over $\comp R$ explains the first step below:
\begin{align*}
\Hom[\comp R]{L}{A}
&
\cong\Hom[\comp R]{L}{A^{vv}}
\cong\Hom[\comp R]{A^{v}}{L^{v}}
\cong\Hom[\comp R]{\md A}{\md L}
\end{align*}
where $(-)^v=\Hom[\comp R]{-}{E}$.
The second step follows from Hom-tensor adjointness,
and the third step is from Lemma~\ref{lem100206c1}\eqref{lem100206c1b}.

\eqref{lem100206c2b}
If $L$ is artinian, then $\md L$ and $\md A$ are noetherian over $\comp R$,
so $\Hom[\comp R]{\md{A}}{\md{L}}$ is also noetherian over $\comp R$.
\end{proof}

The next result contains part of Theorem~\ref{intthm100928b} from the introduction.
When $R$ is not complete, the 
example $\Hom EE\cong \comp R$ shows that $\Ext i{A}{L}$ is not necessarily
noetherian  or artinian over $R$.

\begin{thm}\label{thm100308a}
Let $A$ and $L$ be $R$-modules
such that $A$ is artinian. For each 
index $i\geq 0$ such that $\mu^i_R(L)<\infty$, the module
$\Ext{i}{A}{L}$
is a noetherian $\comp{R}$-module.
\end{thm}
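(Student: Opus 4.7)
The plan is to compute $\Ext{i}{A}{L}$ directly from a minimal injective resolution of $L$ and exploit the structural consequences of the finiteness of $\mu^i_R(L)$ together with the $\m$-torsion of $A$. First, I would fix a minimal injective resolution $I$ of $L$. Because $\mu^i_R(L)<\infty$, Remark~\ref{disc100414a}\eqref{disc100414a2} gives a decomposition $I^i\cong E^{\mu^i_R(L)}\oplus J^i$ with $\Gamma_{\m}(J^i)=0$. Since $A$ is artinian, Fact~\ref{para0z}\eqref{para0z1} says $A$ is $\m$-torsion, so every element of $\Hom{A}{J^i}$ factors through $\Gamma_{\m}(J^i)=0$; hence $\Hom{A}{J^i}=0$ and
\[
\Hom{A}{I^i}\cong \Hom{A}{E}^{\mu^i_R(L)}=(\md A)^{\mu^i_R(L)}.
\]

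Next, I would upgrade module structures to $\comp R$. Since $A$ is $\m$-torsion, Fact~\ref{para0z}\eqref{para0z3} endows $A$ with an $\comp R$-module structure, and this $\comp R$-action on the source of $\Hom{A}{-}$ canonically makes each $\Hom{A}{I^j}$ an $\comp R$-module via $(r\cdot f)(a)=f(ra)$; moreover, the differentials in $\Hom{A}{I}$ are $\comp R$-linear for the same reason. Thus the complex $\Hom{A}{I}$ is a complex of $\comp R$-modules and $\Ext{i}{A}{L}=\HH^i(\Hom{A}{I})$ inherits an $\comp R$-module structure as an $\comp R$-submodule quotient of $\Hom{A}{I^i}\cong(\md A)^{\mu^i_R(L)}$.

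It then remains to observe that $(\md A)^{\mu^i_R(L)}$ is noetherian over $\comp R$. By Lemma~\ref{lem100206c1}\eqref{lem100206c1b}, $\md A = \Hom[\comp R]{A}{E}$; by Lemma~\ref{lem100423c}, $A$ is artinian over the complete ring $\comp R$, so Matlis duality over $\comp R$ (or Lemma~\ref{lem100423f}\eqref{lem100423f2} applied with $R$ replaced by $\comp R$ and $\fa=\m\comp R$) implies $\md A$ is noetherian over $\comp R$. Finiteness of $\mu^i_R(L)$ makes $(\md A)^{\mu^i_R(L)}$ a finite direct sum of noetherian $\comp R$-modules, hence noetherian, and subquotients of noetherian modules are noetherian, giving the conclusion.

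The only real subtlety I anticipate is the bookkeeping around module structures: one must be careful that the $\comp R$-structure on $\Hom{A}{I^i}$ coming from the $\comp R$-structure on $A$ is compatible with the $R$-module differentials (so that kernels and images are actually $\comp R$-submodules), and that the identification $\Hom{A}{I^i}\cong(\md A)^{\mu^i_R(L)}$ is $\comp R$-linear. Both points follow immediately from the functoriality of $\Hom{A}{-}$ in the second argument once the $\comp R$-action on $A$ is fixed, so after dispatching these routine checks the argument concludes.
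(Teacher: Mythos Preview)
Your proposal is correct and follows essentially the same approach as the paper: compute $\Ext{i}{A}{L}$ via a minimal injective resolution, use the $\m$-torsion of $A$ to kill contributions from injective summands not of the form $E$, and conclude that $\Ext{i}{A}{L}$ is an $\comp R$-subquotient of $(\md A)^{\mu^i_R(L)}$, which is noetherian over $\comp R$. The only cosmetic difference is that the paper phrases the reduction via the isomorphism $\Hom{A}{I^i}\cong\Hom{A}{\Gamma_{\m}(I^i)}$ (Lemma~\ref{lem100206c1}\eqref{lem100206c1a}) and invokes Lemma~\ref{lem100206c2} for the noetherianity, whereas you split $I^i$ explicitly and argue the noetherianity of $\md A$ directly; these are the same argument.
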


\begin{proof}
Let $J$ be
a minimal $R$-injective resolution of $L$.
Remark~\ref{disc100414a}\eqref{disc100414a2} implies that $\Gamma_{\m}(J)^{i}\cong E^{\mu^i_R(L)}$.
Lemma~\ref{lem100206c1}\eqref{lem100206c1a}
explains the first isomorphism below:
$$\Hom{A}{J}^{i}\cong\Hom{A}{\Gamma_{\m}(J)^{i}}
\cong\Hom{A}{E}^{\mu^i_R(L)}.$$
Lemma~\ref{lem100206c2}
implies that
these are noetherian $\comp{R}$-modules.
The differentials in the complex $\Hom A{\tors{m}J}$ are $\comp R$-linear
because $A$ is an $\comp R$-module.
Thus, the subquotient $\Ext{i}{A}{L}$ is a noetherian $\comp{R}$-module.
\end{proof}

\begin{cor}\label{cor100501a}
Let $A$ and $M$ be $R$-modules such that $A$ is artinian 
and $M$ is mini-max. For each index
$i\geq 0$, the module
$\Ext{i}{A}{M}$
is a noetherian $\comp{R}$-module.
\end{cor}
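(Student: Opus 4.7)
The plan is to reduce this immediately to Theorem~\ref{thm100308a} by checking its hypothesis on $M$. Theorem~\ref{thm100308a} says that whenever $A$ is artinian and $\mu^i_R(L)<\infty$, the module $\Ext{i}{A}{L}$ is noetherian over $\comp R$. Therefore it suffices to verify $\mu^i_R(M)<\infty$ for every $i\geq 0$, and this is exactly the content of Lemma~\ref{lem100206a} for the mini-max module $M$. Then the conclusion is obtained by a direct appeal to Theorem~\ref{thm100308a} with $L=M$.

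If a more self-contained route is preferred, the alternative plan is to pick a noetherian submodule $N\subseteq M$ with $M/N$ artinian, and apply $\Ext{i}{A}{-}$ to $0\to N\to M\to M/N\to 0$ to obtain the usual long exact sequence. For the outer terms, $\mu^i_R(N)<\infty$ is standard (noetherian case) and $\mu^i_R(M/N)<\infty$ follows from Lemma~\ref{lem100423c} together with an induction as in the proof of Lemma~\ref{lem100206a}. In each case Theorem~\ref{thm100308a} gives that $\Ext{i}{A}{N}$ and $\Ext{i}{A}{M/N}$ are noetherian $\comp R$-modules. Since the class of noetherian $\comp R$-modules is closed under submodules, quotients, and extensions (Lemma~\ref{lem100617a} applied over $\comp R$), Lemma~\ref{lem100614a}\eqref{lem100614a1} with $S=\comp R$ then yields that $\Ext{i}{A}{M}$ is noetherian over $\comp R$.

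There is no real obstacle: the $\comp R$-linearity of all maps involved is automatic because $A$ is $\m$-torsion and hence carries a compatible $\comp R$-structure (Fact~\ref{para0z}\eqref{para0z3}), which is precisely the point exploited in the proof of Theorem~\ref{thm100308a}. The only thing to be careful about in the second approach is invoking Lemma~\ref{lem100614a} with the base ring $S=\comp R$ rather than $R$, so that the middle term in the long exact sequence is automatically a $\comp R$-module.
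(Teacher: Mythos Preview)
Your primary approach is correct and matches the paper's proof exactly: the paper simply invokes Theorem~\ref{thm100308a} together with Lemma~\ref{lem100206a}. Your alternative route via the long exact sequence is also fine but unnecessary here.
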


\begin{proof}
Apply Theorem~\ref{thm100308a} and Lemma~\ref{lem100206a}.
\end{proof}

The next result contains part of Theorem~\ref{intthm100930a} from the introduction.

\begin{cor} \label{prop100316a}
Let $A$ and $L$ be  $R$-modules
such that $R/(\ann_R(A)+\ann_R(L))$ is  complete and $A$ is artinian. 
For each 
index $i\geq 0$ such that $\mu^i_R(L)<\infty$, the module $\Ext{i}{A}{L}$ is noetherian
and Matlis reflexive over $R$ and $\comp R$.
\end{cor}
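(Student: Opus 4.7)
My plan is to let $E_i := \Ext^i_R(A,L)$ and $\fa := \ann_R(A) + \ann_R(L)$, and to establish the conclusion in three stages: (i) $E_i$ is $\fa$-torsion, (ii) $E_i$ is noetherian over both $R$ and $\comp R$, and (iii) $R/\ann_R(E_i)$ is complete, so that Fact~\ref{para1} yields Matlis reflexivity. The starting input is Theorem~\ref{thm100308a}, which already gives $E_i$ noetherian over $\comp R$, so the main task is to transfer this to $R$ and then invoke the characterization of Matlis reflexive modules.

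First I would observe that since both $\ann_R(A)$ and $\ann_R(L)$ annihilate $E_i$ (the former acts via $A$, the latter via $L$, and both act trivially on $\Ext$), we have $\fa \subseteq \ann_R(E_i)$. In particular $E_i$ is $\fa$-torsion, and $R/\ann_R(E_i)$ is a quotient of $R/\fa$, hence complete. This takes care of the annihilator condition needed for Fact~\ref{para1} and for Lemma~\ref{lem100423d}.

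Next, to deduce that $E_i$ is noetherian over $R$, I would combine Fact~\ref{para0z}\eqref{para0z4} with Lemma~\ref{lem100423a}\eqref{lem100423a3}. The hypothesis that $R/\fa$ is complete gives $\Comp R\fa \cong \comp R$, and since $E_i$ is $\fa$-torsion, Lemma~\ref{lem100423a}\eqref{lem100423a3} implies that $E_i$ is noetherian over $R$ if and only if it is noetherian over $\Comp R\fa \cong \comp R$. Theorem~\ref{thm100308a} provides the latter, so $E_i$ is noetherian over $R$ as well.

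Finally, since noetherian modules are mini-max, and $R/\ann_R(E_i)$ is complete by the first step, Fact~\ref{para1} gives that $E_i$ is Matlis reflexive over $R$; then Lemma~\ref{lem100423d} upgrades this to Matlis reflexivity over $\comp R$. I do not anticipate a serious obstacle here: the only delicate point is recognizing that the hypothesis on $R/\fa$ is precisely what lets Fact~\ref{para0z}\eqref{para0z4} identify the $\fa$-adic and $\m$-adic completions, which is the mechanism that turns $\comp R$-noetherianness into $R$-noetherianness.
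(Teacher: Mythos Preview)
Your proposal is correct and follows essentially the same approach as the paper's proof: both start from Theorem~\ref{thm100308a} to get noetherianness over $\comp R$, note that $\ann_R(A)+\ann_R(L)\subseteq\ann_R(\Ext{i}{A}{L})$, and then invoke Lemmas~\ref{lem100423a}\eqref{lem100423a3} and~\ref{lem100423d} to transfer the conclusions to $R$. The only differences are cosmetic: you make the role of Fact~\ref{para0z}\eqref{para0z4} explicit and deduce Matlis reflexivity over $\comp R$ last (via Lemma~\ref{lem100423d}), whereas the paper observes Matlis reflexivity over $\comp R$ first (since $\comp R$ is complete) and then descends to $R$.
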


\begin{proof}
\hyphenation{re-flex-ive}
Theorem~\ref{thm100308a}   shows 
that $\Ext{i}{A}{L}$ is noetherian over $\comp R$; so, it is Matlis reflexive over $\comp R$.
As $\ann_R(A)+\ann_R(L)\subseteq\ann_R(\Ext{i}{A}{L})$, 
Lemmas~\ref{lem100423a}\eqref{lem100423a3} and~\ref{lem100423d} imply
that $\Ext{i}{A}{L}$ is noetherian and Matlis reflexive over $R$.
\end{proof}

\begin{cor}\label{prop100315d}
Let $A$ and $L$ be  $R$-modules such that 
$R/(\ann_R(A)+\ann_R(L))$ is artinian
and $A$ is artinian. 
Given an index $i\geq 0$ such that $\mu^i_R(L)<\infty$, one has 
$\len_R(\Ext iA{L})<\infty$.
\end{cor}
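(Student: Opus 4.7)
The plan is to reduce the finite length conclusion to the standard fact that a noetherian module over an artinian ring has finite length. The main ingredients are already in place: Corollary~\ref{prop100316a} gives noetherianness of $\Ext{i}{A}{L}$ under a completeness hypothesis, so the first task is to match the hypothesis of the present statement against that one.

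First I would note that if $R/(\ann_R(A)+\ann_R(L))$ is artinian, then it is complete. Indeed, it is a local artinian ring (a quotient of the local ring $R$), and its maximal ideal is nilpotent, so it is trivially complete with respect to the image of $\m$. Hence the hypothesis of Corollary~\ref{prop100316a} is satisfied, and that corollary yields that $\Ext{i}{A}{L}$ is noetherian over $R$.

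Next I would observe the standard containment
\[
\ann_R(A) + \ann_R(L) \subseteq \ann_R\bigl(\Ext{i}{A}{L}\bigr),
\]
which shows that $\Ext{i}{A}{L}$ carries a natural module structure over the ring $\ol R := R/(\ann_R(A)+\ann_R(L))$. Since $\ol R$ is artinian, and since the $\ol R$-submodules of $\Ext{i}{A}{L}$ coincide with its $R$-submodules, the module $\Ext{i}{A}{L}$ is noetherian over the artinian ring $\ol R$. A noetherian module over an artinian ring has finite length, so $\len_{\ol R}\!\bigl(\Ext{i}{A}{L}\bigr)<\infty$; this length is equal to the length over $R$, completing the argument.

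There is no real obstacle here beyond assembling the two inputs correctly: the completeness of a local artinian ring (so that Corollary~\ref{prop100316a} applies) and the standard annihilator inclusion for $\Ext$. Both are routine.
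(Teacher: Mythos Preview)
Your proof is correct and follows essentially the same approach as the paper, which simply says ``Apply Theorem~\ref{thm100308a} and Lemma~\ref{lem100213b}.'' You route through Corollary~\ref{prop100316a} (using that a local artinian ring is complete) rather than directly through Theorem~\ref{thm100308a}, and you unpack the finite-length conclusion by hand rather than citing Lemma~\ref{lem100213b}, but the underlying argument is the same.
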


\begin{proof}
Apply Theorem~\ref{thm100308a} and
Lemma~\ref{lem100213b}.
\end{proof}

\subsection*{Matlis Reflexivity of $\mathbf{Ext^{i}_R(M,M')}$}

\begin{thm} \label{prop100420a}
Let $A$ and $M$ be $R$-modules such that $A$ is artinian and $M$ is mini-max.
For each $i\geq 0$,
the module $\Ext{i}M{A}$ is  Matlis reflexive over $\comp R$.
\end{thm}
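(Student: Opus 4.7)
The plan is to exploit the mini-max structure of $M$ by choosing a noetherian submodule $N \subseteq M$ with $M/N$ artinian, and to reduce the claim for $\Ext{i}{M}{A}$ to the two endpoint cases of this short exact sequence. Since $A$ is $\m$-torsion, it carries a natural $\comp R$-module structure, so every $\Ext{i}{-}{A}$ is a $\comp R$-module. The class of Matlis reflexive $\comp R$-modules is closed under submodules, quotients, and extensions by Lemma~\ref{lem100617a} (applied over the complete ring $\comp R$), so I would apply Lemma~\ref{lem100614a}\eqref{lem100614a2} with $S=\comp R$ to the exact sequence $0\to N\to M\to M/N\to 0$. This reduces the theorem to proving Matlis reflexivity over $\comp R$ for $\Ext{i}{N}{A}$ and for $\Ext{i}{M/N}{A}$.

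For the noetherian piece, I would choose a resolution $F\to N$ by finitely generated free $R$-modules. Then each $\Hom{F_i}{A}\cong A^{b_i}$ is an artinian $R$-module, hence so is the subquotient $\Ext{i}{N}{A}$. By Lemma~\ref{lem100423c} this module is artinian over $\comp R$, in particular mini-max over $\comp R$, and since $\comp R$ is complete Fact~\ref{para1} yields Matlis reflexivity over $\comp R$. For the artinian piece, note that $M$ (and therefore $A$, which is even artinian) is mini-max, so Lemma~\ref{lem100206a} gives $\mu^i_R(A)<\infty$. Theorem~\ref{thm100308a} then shows $\Ext{i}{M/N}{A}$ is noetherian over $\comp R$, hence mini-max, hence Matlis reflexive over $\comp R$ by Fact~\ref{para1} once again.

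The only real subtlety is to make sure the reduction step is legitimate, that is, that the connecting maps in the long exact sequence obtained from $0\to N\to M\to M/N\to 0$ are $\comp R$-linear so that the Matlis reflexivity assertion is meaningful and can be transferred; this is exactly the content built into Lemma~\ref{lem100614a}\eqref{lem100614a2}, since $A$ is already an $\comp R$-module. Everything else is a direct application of results already established in Section~\ref{sec1} and of Theorem~\ref{thm100308a}.
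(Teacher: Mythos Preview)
Your proof is correct and follows essentially the same approach as the paper's: split $M$ via a noetherian submodule $N$ with artinian quotient $M/N$, observe that $\Ext{i}{N}{A}$ is artinian and $\Ext{i}{M/N}{A}$ is noetherian over $\comp R$ (the paper cites Corollary~\ref{cor100501a} here, which is just your Theorem~\ref{thm100308a} plus Lemma~\ref{lem100206a} combined), and then apply Lemma~\ref{lem100614a}\eqref{lem100614a2} over $\comp R$. The only cosmetic difference is that the paper takes $\catc$ to be the mini-max $\comp R$-modules and then invokes Fact~\ref{para1} at the end, whereas you take $\catc$ to be the Matlis reflexive $\comp R$-modules directly; both are justified by Lemma~\ref{lem100617a}.
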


\begin{proof}
Fix a noetherian submodule $N\subseteq M$ such that $M/N$ is artinian.
Since $A$ is artinian, it is an $\comp R$-module.
Corollary~\ref{cor100501a} implies that $\Ext{i}{M/N}{A}$ is a noetherian $\comp R$-module.
As $\Ext{i}N{A}$ is artinian,
Lemma~\ref{lem100614a}\eqref{lem100614a2} says that
$\Ext{i}M{A}$ is a mini-max $\comp R$-module
and hence is Matlis reflexive over $\comp R$ by Fact~\ref{para1}.
\end{proof}

\begin{thm} \label{prop100420b}
Let $M$ and $N'$ be   $R$-modules such that $M$ is mini-max and $N'$ is noetherian. 
Fix an index $i\geq 0$.
If $R/(\ann_R(M)+\ann_R(N'))$ is complete, then 
$\Ext{i}M{N'}$ is  noetherian and Matlis reflexive over $R$ and over $\comp R$.
\end{thm}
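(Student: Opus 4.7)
The plan is to reduce to the artinian case using the mini-max structure of $M$. Since $M$ is mini-max, I would fix a noetherian submodule $N \subseteq M$ with $A := M/N$ artinian, and apply $\Ext{i}{-}{N'}$ to the short exact sequence $0 \to N \to M \to A \to 0$ to obtain
\[
\Ext{i}{A}{N'} \to \Ext{i}{M}{N'} \to \Ext{i}{N}{N'}.
\]
The right-hand term is noetherian over $R$ since both $N$ and $N'$ are. For the left-hand term, the inclusion $\ann_R(M) \subseteq \ann_R(A)$ forces $R/(\ann_R(A) + \ann_R(N'))$ to be a quotient of the complete ring $R/(\ann_R(M) + \ann_R(N'))$, and so it is itself complete; since $\mu^i_R(N') < \infty$ (as $N'$ is noetherian), Corollary~\ref{prop100316a} gives that $\Ext{i}{A}{N'}$ is noetherian and Matlis reflexive over both $R$ and $\comp R$. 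Then Lemma~\ref{lem100614a}\eqref{lem100614a2}, applied with $S = R$ and $\catc$ the class of noetherian $R$-modules (which is closed under submodules, quotients, and extensions by Lemma~\ref{lem100617a}), yields that $\Ext{i}{M}{N'}$ is noetherian over $R$.

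The remaining conclusions follow by promoting along the ring extension $R \to \comp R$. Set $\fa := \ann_R(M) + \ann_R(N')$; since $\fa$ annihilates $\Ext{i}{M}{N'}$, this module is $\fa$-torsion, and $R/\ann_R(\Ext{i}{M}{N'})$ is complete as a quotient of $R/\fa$. By Fact~\ref{para0z}\eqref{para0z4}, $\Comp Ra \cong \comp R$, so Lemma~\ref{lem100423a}\eqref{lem100423a3} upgrades noetherianness over $R$ to noetherianness over $\comp R$. Since noetherian modules are mini-max, Lemma~\ref{lem100423d} (equivalently, Fact~\ref{para1}) then yields that $\Ext{i}{M}{N'}$ is Matlis reflexive over both $R$ and $\comp R$.

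I do not foresee a serious obstacle; the argument is essentially bookkeeping. The ingredients are the artinian subcase handled by Corollary~\ref{prop100316a}, the elementary noetherian-over-noetherian subcase, the standard Ext long exact sequence, and the transport of properties along $R \to \comp R$ via the $\fa$-torsion structure. The one point worth flagging explicitly is the containment $\ann_R(M) \subseteq \ann_R(N) \cap \ann_R(A)$, which guarantees that the completeness hypothesis descends from $M$ to its subquotients $N$ and $A$, and thus allows Corollary~\ref{prop100316a} to be applied.
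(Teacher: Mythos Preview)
Your proposal is correct and follows essentially the same route as the paper: decompose $M$ via a noetherian submodule with artinian quotient, handle the artinian piece with Corollary~\ref{prop100316a} (using that $R/(\ann_R(M/N)+\ann_R(N'))$ is complete as a quotient of $R/(\ann_R(M)+\ann_R(N'))$), combine via Lemma~\ref{lem100614a}\eqref{lem100614a2}, and then promote to $\comp R$ using Lemmas~\ref{lem100423a}\eqref{lem100423a3} and~\ref{lem100423d} together with Fact~\ref{para1}. The only difference is cosmetic: you spell out the $\fa$-torsion argument and Fact~\ref{para0z}\eqref{para0z4} explicitly, while the paper simply invokes the same lemmas directly.
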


\begin{proof}
Fix a noetherian submodule $N\subseteq M$ such that $M/N$ is artinian.
If the ring $R/(\ann_R(M)+\ann_R(N'))$ is complete, then so is 
$R/(\ann_R(M/N)+\ann_R(N'))$.
Corollary~\ref{prop100316a} implies that $\Ext{i}{M/N}{N'}$ is noetherian over $R$.
Since $\Ext{i}N{N'}$ is noetherian over $R$, Lemma~\ref{lem100614a}\eqref{lem100614a2} implies that
$\Ext{i}M{N'}$ is  noetherian over $R$.
As $R/(\ann_R(\Ext{i}M{N'}))$ is complete, 
Fact~\ref{para1} implies that $\Ext{i}M{N'}$ is  
also Matlis reflexive over $R$.
Thus
$\Ext{i}M{N'}$ is  noetherian and Matlis reflexive over $\comp R$
by Lemmas~\ref{lem100423a}\eqref{lem100423a3} and~\ref{lem100423d}.
\end{proof}

\begin{thm} \label{prop100421b}
Let $M$ and $M'$ be mini-max $R$-modules, and fix an index $i\geq 0$.
\begin{enumerate}[\rm(a)]
\item \label{prop100421b2}
If $R/(\ann_R(M)+\ann_R(M'))$ is complete, then 
$\Ext{i}M{M'}$ is  Matlis reflexive over $R$ and $\comp R$.
\item \label{prop100421b3}
If $R/(\ann_R(M)+\ann_R(M'))$ is artinian, then 
$\Ext{i}M{M'}$ has finite length.
\end{enumerate}
\end{thm}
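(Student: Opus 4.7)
The plan is to reduce both statements to the previous theorems in the section by decomposing $M'$ via a short exact sequence $0\to N'\to M'\to M'/N'\to 0$ in which $N'\subseteq M'$ is a noetherian submodule with artinian quotient (such $N'$ exists because $M'$ is mini-max). First I would verify a simple observation about annihilators: since $N'\subseteq M'$ we have $\ann_R(M')\subseteq\ann_R(N')$, so $R/(\ann_R(M)+\ann_R(N'))$ is a quotient of $R/(\ann_R(M)+\ann_R(M'))$. Quotients of complete local rings are complete, and quotients of artinian local rings are artinian, so in either case the annihilator hypothesis passes to $N'$ (and to $M'/N'$ by the same reasoning).

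For part~\eqref{prop100421b2}, I would apply $\Ext{i}{M}{-}$ to the short exact sequence above. Theorem~\ref{prop100420a}, applied to the artinian module $M'/N'$, yields that $\Ext{i}{M}{M'/N'}$ is Matlis reflexive over $\comp R$. Theorem~\ref{prop100420b}, applied to the noetherian module $N'$ and using that $R/(\ann_R(M)+\ann_R(N'))$ is complete, gives that $\Ext{i}{M}{N'}$ is Matlis reflexive over $\comp R$ as well. Since the class of Matlis reflexive $\comp R$-modules is closed under submodules, quotients, and extensions by Lemma~\ref{lem100617a}, Lemma~\ref{lem100614a}\eqref{lem100614a1} applied with $S=\comp R$ then forces $\Ext{i}{M}{M'}$ to be Matlis reflexive over $\comp R$. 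Finally, since $\ann_R(M)+\ann_R(M')\subseteq\ann_R(\Ext{i}{M}{M'})$, the ring $R/\ann_R(\Ext{i}{M}{M'})$ is a quotient of the complete ring $R/(\ann_R(M)+\ann_R(M'))$ and hence is complete, so Lemma~\ref{lem100423d} transfers the reflexivity to~$R$.

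For part~\eqref{prop100421b3}, I would first note that an artinian local ring is complete, so part~\eqref{prop100421b2} applies and $\Ext{i}{M}{M'}$ is Matlis reflexive, hence mini-max. On the other hand, the artinian hypothesis means that the maximal ideal of $R/(\ann_R(M)+\ann_R(M'))$ is nilpotent, so there exists $t\geq 1$ with $\m^t\subseteq\ann_R(M)+\ann_R(M')\subseteq\ann_R(\Ext{i}{M}{M'})$. Lemma~\ref{lem100213b} then upgrades the mini-max module $\Ext{i}{M}{M'}$ to a module of finite length.

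The only real subtlety is the verification that the annihilator hypothesis survives when one replaces $M'$ by its noetherian subobject or by its artinian quotient; the rest is an assembly of the previous two theorems with Lemma~\ref{lem100614a}. I do not expect any serious obstacle beyond tracking the $R$ versus $\comp R$ structures, which is handled uniformly by Lemma~\ref{lem100423d}.
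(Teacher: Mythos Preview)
Your strategy is the same as the paper's: split $M'$ via a noetherian submodule $N'$ with artinian quotient, apply Theorems~\ref{prop100420b} and~\ref{prop100420a} to the two pieces, glue with Lemma~\ref{lem100614a}\eqref{lem100614a1}, and deduce part~\eqref{prop100421b3} from part~\eqref{prop100421b2} together with Lemma~\ref{lem100213b}.

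There is one technical slip in part~\eqref{prop100421b2}. You invoke Lemma~\ref{lem100614a}\eqref{lem100614a1} with $S=\comp R$, but that lemma requires the module $L$ in the first Ext-slot---here $M$---to be an $S$-module. Nothing in the hypotheses makes $M$ an $\comp R$-module: we only know $R/(\ann_R(M)+\ann_R(M'))$ is complete, not $R/\ann_R(M)$ (e.g.\ take $R$ non-complete, $M=R$, and $M'$ artinian). The paper avoids this by reversing your order of operations: it first uses Lemma~\ref{lem100423d} to pull the $\comp R$-reflexivity of $\Ext{i}{M}{M'/N'}$ down to $R$-reflexivity (legitimate since $\ann_R(M)+\ann_R(M')$ annihilates this Ext), then applies Lemma~\ref{lem100614a}\eqref{lem100614a1} with $S=R$ and the class of Matlis reflexive $R$-modules, and only at the end transfers back up to $\comp R$ via Lemma~\ref{lem100423d}. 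Your argument is easily repaired by this reordering; alternatively one could check directly that the connecting maps in the long exact sequence are $\comp R$-linear via Lemma~\ref{lem100206c1}\eqref{lem100206c1b}, since all three Ext modules are $(\ann_R(M)+\ann_R(M'))$-torsion, but that is extra work not supplied.
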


\begin{proof}
Fix a noetherian submodule $N'\subseteq M'$ such that $M'/N'$ is artinian.

\eqref{prop100421b2}
Assume that $R/(\ann_R(M)+\ann_R(M'))$ is complete. 
Theorem~\ref{prop100420b} implies that
the module  $\Ext{i}M{N'}$ is Matlis reflexive 
over $R$.
Theorem~\ref{prop100420a} shows that
$\Ext{i}M{M'/N'}$ is Matlis reflexive 
over $\comp R$; hence, it is  Matlis reflexive 
over $R$ by Lemma~\ref{lem100423d}.
Thus, Lemmas~\ref{lem100614a}\eqref{lem100614a1} and~\ref{lem100423d} imply that
$\Ext{i}M{M'}$ is  Matlis reflexive  over $R$ and $\comp R$.

\eqref{prop100421b3}
This follows from part~\eqref{prop100421b2},
because of Fact~\ref{para1} and Lemma~\ref{lem100213b}.
\end{proof}

A special case of the next result  can be found in \cite[Theorem 3]{belshoff:scrtmrm}.

\begin{cor} \label{cor100618}
Let $M$ and $M'$ be $R$-modules such that $M$ is mini-max and $M'$ is Matlis reflexive. For each index $i\geq 0$,
the modules
$\Ext{i}M{M'}$ and $\Ext{i}{M'}M$ are Matlis reflexive over $R$ and $\comp R$.
\end{cor}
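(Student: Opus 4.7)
The plan is to deduce this directly from Theorem~\ref{prop100421b}(a), which requires both arguments of $\Ext$ to be mini-max and the quotient ring $R/(\ann_R(M)+\ann_R(M'))$ to be complete. The hypotheses here give us both conditions almost for free, so the proof should be essentially a bookkeeping exercise about annihilator ideals.

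First, I would invoke Fact~\ref{para1} on $M'$: since $M'$ is Matlis reflexive, it is mini-max and $R/\ann_R(M')$ is complete. So both $M$ and $M'$ are mini-max, satisfying the first hypothesis of Theorem~\ref{prop100421b}(a).

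Next, I would verify that $R/(\ann_R(M)+\ann_R(M'))$ is complete. Since $\ann_R(M') \subseteq \ann_R(M)+\ann_R(M')$, there is a natural surjection
\[
R/\ann_R(M') \onto R/(\ann_R(M)+\ann_R(M')).
\]
The source is a complete local ring, and a quotient of a complete local ring (in its natural topology) is again complete, so the target is complete. By the same argument applied to $\ext^i_R(M',M)$, the ring $R/(\ann_R(M')+\ann_R(M))$ is complete — this is the same ring, of course.

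With both hypotheses verified, Theorem~\ref{prop100421b}(a) immediately gives that $\Ext{i}{M}{M'}$ and $\Ext{i}{M'}{M}$ are Matlis reflexive over $R$, and then Lemma~\ref{lem100423d} promotes this to Matlis reflexivity over $\comp R$ as well. I do not foresee any genuine obstacle here; the only subtlety is the standard fact that quotients of complete local rings are complete, which is why the completeness of $R/\ann_R(M')$ alone suffices even though we know nothing a priori about $R/\ann_R(M)$.
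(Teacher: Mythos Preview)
Your proof is correct and follows essentially the same route as the paper, which simply says ``Apply Theorem~\ref{prop100421b}\eqref{prop100421b2} and Fact~\ref{para1}''; you have merely spelled out the implicit step that a quotient of the complete local ring $R/\ann_R(M')$ is again complete. One small redundancy: Theorem~\ref{prop100421b}\eqref{prop100421b2} already delivers Matlis reflexivity over both $R$ and $\comp R$, so your final appeal to Lemma~\ref{lem100423d} is unnecessary.
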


\begin{proof}
Apply Theorem~\ref{prop100421b}\eqref{prop100421b2}
and Fact~\ref{para1}.
\end{proof}

\subsection*{Length Bounds for $\mathbf{Hom_R(A,L)}$}

%
%

\begin{lem}\label{lem100312b}
Let $A$ and $L$ be $R$-modules
such that $A$ is artinian and  $\fm^n\Gamma_{\m}(L)=0$ for some $n\geq 1$.
Fix an index $t\geq 0$ such that $\m^tA=\m^{t+1}A$, and let $s$ be an integer such that 
$s\geq\min(n,t)$.
Then  
\begin{align*}
\Hom{A}{L}
&
\cong\Hom{A/\m^sA}{L}
\cong\Hom{A/\m^sA}{(0:_L\m^s)}.
\end{align*}
\end{lem}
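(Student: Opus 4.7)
The plan is to show that every homomorphism $\phi\colon A\to L$ annihilates $\m^sA$; restriction along the canonical projection $A\onto A/\m^sA$ then yields the first isomorphism, after which the second isomorphism becomes routine because $\m^s$ kills the source $A/\m^sA$.

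First I would observe that for any $\phi\in\Hom{A}{L}$, the image $\phi(A)$ is an artinian quotient of $A$ and hence $\m$-torsion by Fact~\ref{para0z}\eqref{para0z1}, so $\phi(A)\subseteq\Gamma_{\m}(L)$. The hypothesis $\m^n\Gamma_{\m}(L)=0$ then gives $\phi(\m^nA)=\m^n\phi(A)=0$, so $\phi$ factors through $A/\m^nA$. Next I would exploit the stabilization $\m^tA=\m^{t+1}A$, which propagates by an easy induction to $\m^jA=\m^tA$ for every $j\geq t$. Combining these two facts, I would check that $\m^sA\subseteq\ker\phi$ whenever $s\geq\min(n,t)$: if $s\geq n$, this is immediate from $\m^sA\subseteq\m^nA\subseteq\ker\phi$; and if $s\geq t$ but $s<n$, then $\min(n,t)=t\leq n$, so the stabilization forces $\m^sA=\m^tA=\m^nA\subseteq\ker\phi$. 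Either way, $\phi$ factors through $A/\m^sA$, giving the first claimed isomorphism.

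For the second isomorphism, I would note that any $\psi\colon A/\m^sA\to L$ has its image killed by $\m^s$, so $\im\psi\subseteq(0:_L\m^s)$; thus the inclusion $(0:_L\m^s)\hookrightarrow L$ induces the desired isomorphism $\Hom{A/\m^sA}{(0:_L\m^s)}\cong\Hom{A/\m^sA}{L}$. No step presents a serious technical obstacle; the only point that needs genuine care is the case analysis for $s\geq\min(n,t)$, which is what makes the statement slightly less obvious than its individual special cases $s\geq n$ or $s\geq t$.
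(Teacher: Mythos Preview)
Your proposal is correct and follows essentially the same path as the paper's proof: both reduce to $\Gamma_{\m}(L)$ (you do this elementwise via $\phi(A)\subseteq\Gamma_{\m}(L)$, the paper via Lemma~\ref{lem100206c1}\eqref{lem100206c1a}), then use $\m^n\Gamma_{\m}(L)=0$ together with the same case split on $s\geq n$ versus $s<n$ and the stabilization $\m^tA=\m^jA$ for $j\geq t$. The second isomorphism is handled identically in both arguments.
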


\begin{proof}
Given any map $\psi\in\Hom{A/\m^sA}{L}$, the image of $\psi$
is annihilated by $\m^s$. That is,
$\im(\psi)\subseteq(0:_L\m^s)$; hence 
$\Hom{A/\m^sA}{L}
\cong\Hom{A/\m^sA}{(0:_L\m^s)}$.
In the next sequence, the first and third isomorphisms are from Lemma~\ref{lem100206c1}\eqref{lem100206c1a}:
\begin{align*}
\Hom{A}{L}
&\cong\Hom{A}{\tors mL}
\cong\Hom{A/\m^sA}{\tors mL}
\cong\Hom{A/\m^sA}{L}.
\end{align*}
For the second isomorphism, we argue by cases.
If $s\geq n$, then we have $\m^s\Gamma_{\m}(L)=0$ because
$\m^n\Gamma_{\m}(L)=0$, and the isomorphism is evident.
If $s<n$, then we have $n>s\geq t$, so $\m^tA=\m^sA=\m^{n}A$ since
$\m^tA=\m^{t+1}A$; it follows that
$\Hom{A}{\tors mL}
\cong\Hom{A/\m^nA}{\tors mL}
\cong\Hom{A/\m^sA}{\tors mL}$.
\end{proof}

For the next result,
the example $\Hom EE\cong\comp R$ shows that the condition 
$\fm^n\Gamma_{\m}(L)=0$ is necessary.

\begin{thm}\label{thm100312b}
Let $A$ and $L$ be $R$-modules 
such that $A$ is artinian and $\fm^n\Gamma_{\m}(L)=0$ for some $n\geq 1$.
Fix an index $t\geq 0$ such that $\m^tA=\m^{t+1}A$, and let $s$ be an integer such that 
$s\geq\min(n,t)$.
Then there is an inequality
$$\len_R(\Hom{A}{L})\leq\beta^R_0(A)\len_R(0:_L\m^s).$$
Here, we use the convention $0\cdot\infty=0$.
\end{thm}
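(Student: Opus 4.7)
The plan is to reduce the problem to Lemma~\ref{lem100312b} and then exploit the fact that the auxiliary module $A/\m^sA$ is annihilated by $\m^s$ and has a small number of generators.

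First, I would apply Lemma~\ref{lem100312b} to obtain the isomorphism
$$\Hom{A}{L}\cong \Hom{A/\m^s A}{(0:_L\m^s)},$$
so that the goal becomes bounding the length of the right-hand Hom. The key observation is that $A/\m^sA$ is a cyclically killed quotient of $A$, whose minimal number of generators equals
$\beta_0^R(A/\m^sA)=\len_R((A/\m^sA)/\m(A/\m^sA))=\len_R(A/\m A)=\beta_0^R(A).$
Setting $b=\beta_0^R(A)$, a choice of minimal generators yields a surjection $R^b\onto A/\m^sA$ that factors through a surjection $(R/\m^s)^b\onto A/\m^sA$, because $\m^s$ annihilates the target.

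Next, I would apply the left-exact functor $\Hom{-}{(0:_L\m^s)}$ to this surjection. Because $(0:_L\m^s)$ is annihilated by $\m^s$, one has
$\Hom{R/\m^s}{(0:_L\m^s)}\cong (0:_L\m^s),$
and hence the induced map
$$\Hom{A/\m^sA}{(0:_L\m^s)}\Into \Hom{(R/\m^s)^b}{(0:_L\m^s)}\cong(0:_L\m^s)^b$$
is injective. Taking lengths (using that length is additive on direct sums and monotone under submodules), this combined with the isomorphism from the first paragraph gives
$\len_R(\Hom{A}{L})\leq b\cdot\len_R(0:_L\m^s)=\beta_0^R(A)\cdot\len_R(0:_L\m^s),$
which is the desired inequality whenever $\beta_0^R(A)$ and $\len_R(0:_L\m^s)$ are not both problematic.

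The only remaining point is the boundary case for the convention $0\cdot\infty=0$; this is not an obstacle but deserves a brief comment. If $\beta_0^R(A)=0$, then $A/\m A=0$, so Nakayama-style reasoning for artinian modules (every nonzero artinian module over a local ring surjects onto $k$, and hence has nonzero reduction modulo~$\m$) forces $A=0$. In that case $\Hom{A}{L}=0$ and both sides vanish, so the stated inequality holds under the convention. In all other cases the displayed inequality already makes sense as stated (with $\infty$ allowed on either side). I expect no serious obstacle: the entire argument is a clean application of Lemma~\ref{lem100312b} followed by the standard trick of bounding $\Hom$ out of a finitely generated module by the number of generators, with only the $0\cdot\infty$ convention requiring a line of care.
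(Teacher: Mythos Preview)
Your main argument is correct and matches the paper's approach: both invoke Lemma~\ref{lem100312b} to reduce to $\Hom{A/\m^sA}{(0:_L\m^s)}$, observe that $\beta_0^R(A/\m^sA)=\beta_0^R(A)$, and then bound the Hom by the number of generators times $\len_R(0:_L\m^s)$. The paper phrases the last step as an induction on the two quantities; your explicit surjection $(R/\m^s)^b\onto A/\m^sA$ and the resulting injection into $(0:_L\m^s)^b$ is a cleaner way to say the same thing.

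There is, however, a genuine error in your treatment of the boundary case. The parenthetical claim that ``every nonzero artinian module over a local ring surjects onto $k$'' is false, and so $\beta_0^R(A)=0$ does \emph{not} force $A=0$. For instance, when $\depth(R)\geq 1$ one has $\m E=E$ (see Example~\ref{ex100215a} in the paper), so $\beta_0^R(E)=0$ while $E\neq 0$. You have confused the dual statement: nonzero artinian modules contain $k$ (nonzero socle), but need not surject onto it. The correct observation, which is what the paper uses, is that $\beta_0^R(A)=0$ means $\m A=A$, hence $\m^sA=A$ and $A/\m^sA=0$; then Lemma~\ref{lem100312b} already gives $\Hom{A}{L}\cong\Hom{0}{(0:_L\m^s)}=0$. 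In fact your own main argument handles this automatically: with $b=0$, the module $A/\m^sA$ (which has finite length since it is artinian and killed by $\m^s$) is generated by zero elements, so it vanishes and the injection into $(0:_L\m^s)^0=0$ finishes the job. Just delete the incorrect Nakayama claim and either note this directly or defer to the main argument.
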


\begin{proof}
We deal with the degenerate case first. 
If $\beta^R_0(A) =0$, then $A/\m A=0$,
so 
$$\Hom AL\cong\Hom{A/\m A}{L}=\Hom 0L=0$$
by Lemma~\ref{lem100312b}.
So, we assume for the rest of the proof that
$\beta^R_0(A)\neq 0$.
We also assume without loss of generality that
$\len_R(0:_L\m^s)<\infty$.

Lemma~\ref{lem100312b}
explains the first step in the following sequence:
\begin{align*}
\len_R(\Hom{A}{L})
&=\len_R(\Hom{A/\m^sA}{(0:_L\m^s)})\\
&\leq\beta^R_0(A/\m^s A)\len_R(0:_L\m^s)\\
&=\beta^R_0(A)\len_R(0:_L\m^s).
\end{align*}
The second step  can be proved by induction on 
$\beta^R_0(A/\m^s A)$ and $\len_R(0:_L\m^s)$.
\end{proof}

The next result gives part of Theorem~\ref{intthm100928a} from the introduction.
Example~\ref{ex14} shows that 
one should not expect to have $\len_R(\Ext{i}{A}{N})<\infty$ when $i\geq 1$.

\begin{cor}\label{cor100319d}
If $A$ and $N$ are  $R$-modules such that $A$ is artinian and $N$ is noetherian,
then $\len_R(\Hom{A}{N})
<\infty$.
\end{cor}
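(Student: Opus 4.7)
The plan is to reduce to Theorem~\ref{thm100312b} by checking that its hypotheses apply with $L=N$. First I would verify that $\m^n\Gamma_\m(N)=0$ for some $n\geq 1$: since $N$ is noetherian, the submodule $\Gamma_\m(N)$ is noetherian and $\m$-torsion, so it is finitely generated with each generator annihilated by some power of $\m$; taking $n$ to be the maximum such power gives $\m^n\Gamma_\m(N)=0$.

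Next, since $A$ is artinian, the descending chain $\m A\supseteq\m^2 A\supseteq\cdots$ stabilizes, yielding an integer $t\geq 0$ with $\m^t A=\m^{t+1}A$. Fix $s\geq\min(n,t)$.

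The quantity $\beta^R_0(A)=\len_R(A/\m A)$ is finite because $A/\m A$ is a quotient of the artinian module $A$ that is annihilated by $\m$, hence a finite-dimensional vector space over $k$. Similarly, $(0:_N\m^s)$ is a noetherian module (being a submodule of $N$) that is annihilated by $\m^s$, so Lemma~\ref{lem100213b} gives $\len_R(0:_N\m^s)<\infty$.

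Theorem~\ref{thm100312b} then yields
\[
\len_R(\Hom{A}{N})\leq \beta^R_0(A)\cdot\len_R(0:_N\m^s)<\infty,
\]
as desired. There is no essential obstacle here; the entire argument is a bookkeeping exercise checking the finiteness of the two factors in the bound produced by Theorem~\ref{thm100312b}.
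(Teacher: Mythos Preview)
Your proof is correct and follows essentially the same approach as the paper: apply Theorem~\ref{thm100312b} with $L=N$ after checking its hypotheses. The paper's proof is the one-line ``Apply Theorem~\ref{thm100312b} and Lemma~\ref{lem100206a}'', using Lemma~\ref{lem100206a} for the finiteness of $\beta^R_0(A)$; you instead justify $\beta^R_0(A)<\infty$ directly and supply the remaining verifications explicitly, but the route is the same.
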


\begin{proof}
Apply Theorem~\ref{thm100312b} and Lemma~\ref{lem100206a}.
\end{proof}

\section{Properties of $\Tor{i}{M}{-}$}\label{sec3}

This section focuses on properties of the functors $\Tor{i}{M}{-}$ where $M$ is a mini-max $R$-module.

\subsection*{Artinianness of $\mathbf{Tor_{i}^R(A,L)}$}

\

\vspace{1mm}

\noindent 
The next result contains part of Theorem~\ref{intthm100928b} from the introduction.
Recall that a module is artinian over $R$
if and only if it is artinian over $\comp R$; see Lemma~\ref{lem100423c}.

\begin{thm}\label{thm100320b}
Let $A$ and $L$ be $R$-modules such that $A$ is artinian. For each 
index $i\geq 0$ such that $\beta^R_i(L)<\infty$, the module
$\Tor{i}{A}{L}$
is artinian.
\end{thm}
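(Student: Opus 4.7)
The plan is to reduce to the already-established Ext case via Matlis duality, dualising the whole statement and converting Betti numbers into Bass numbers.

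First, I would invoke the isomorphism
\[
\md{\Tor{i}{A}{L}} \cong \Ext{i}{A}{\md{L}}
\]
from Remark~\ref{disc100602b}, which is just Hom--tensor adjointness applied to an injective resolution of $E$. Next, using Lemma~\ref{lem100213c}\eqref{lem100213c5}, the hypothesis translates to
\[
\mu^i_R(\md{L}) \;=\; \beta^R_i(L) \;<\; \infty.
\]
Hence Theorem~\ref{thm100308a}, applied to the pair $(A,\md{L})$ in place of $(A,L)$, shows that the module $\Ext{i}{A}{\md{L}}$ is noetherian over $\comp{R}$. Combining this with the displayed isomorphism, $\md{\Tor{i}{A}{L}}$ is a noetherian $\comp{R}$-module.

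To finish, I would apply Lemma~\ref{lem100423f}\eqref{lem100423f3}: if $\md{(\Tor{i}{A}{L})}$ is noetherian over $\comp{R}$, then $\Tor{i}{A}{L}$ is artinian over $R$, as desired. (Note that $\Tor{i}{A}{L}$ is automatically $\m$-torsion since $A$ is, so all relevant Matlis-duality/module-structure identifications are legitimate.)

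I do not expect any genuine obstacle: every intermediate fact -- the $\md{(-)}$ conversion of Tor into Ext, the Betti/Bass swap under Matlis duality, the noetherianness conclusion of Theorem~\ref{thm100308a}, and the Matlis-duality transfer from noetherian $\comp{R}$-modules to artinian $R$-modules -- has been prepared in Sections~1 and~2. The parenthetical reminder in the lead-up, that artinianness over $R$ and over $\comp{R}$ coincide, fits naturally at the end of the argument once one has shown $\md{\Tor{i}{A}{L}}$ is noetherian over $\comp{R}$.
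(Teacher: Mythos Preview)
Your proposal is correct and follows essentially the same route as the paper: convert Betti to Bass via Lemma~\ref{lem100213c}\eqref{lem100213c5}, use the adjointness isomorphism $\md{\Tor{i}{A}{L}}\cong\Ext{i}{A}{\md L}$ from Remark~\ref{disc100602b}, apply Theorem~\ref{thm100308a} to get noetherianness over $\comp R$, and finish with Lemma~\ref{lem100423f}\eqref{lem100423f3}. The only cosmetic difference is the order of the first two steps, and your parenthetical remark about $\m$-torsion is unnecessary (Lemma~\ref{lem100423f}\eqref{lem100423f3} has no torsion hypothesis) but harmless.
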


\begin{proof}
Lemma~\ref{lem100213c}\eqref{lem100213c5} implies
that $\mu^i_R(\md L)=\beta^R_i(L)<\infty$.
By Remark~\ref{disc100602b}, we have
$\Ext{i}{A}{\md L}\cong\md{\Tor{i}{A}{L}}$.
Thus, $\md{\Tor{i}{A}{L}}$ is a noetherian $\comp R$-module by 
Theorem~\ref{thm100308a}, and we conclude that $\Tor{i}{A}{L}$ is artinian
by Lemma~\ref{lem100423f}\eqref{lem100423f3}.
\end{proof}

For the next result, 
the example $\Otimes{E}{R}\cong E$ shows that
$\Tor{i}{A}{L}$ is not necessarily noetherian over $R$ or $\comp R$.

\begin{cor}\label{cor100502a}
Let $A$ and $M$ be  $R$-modules
such that $A$ is artinian and $M$   mini-max. For 
each index $i\geq 0$, the module
$\Tor{i}{A}{M}$
is artinian.
\end{cor}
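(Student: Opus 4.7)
The plan is to obtain this corollary by combining the preceding general results about $\Tor$ of an artinian module against a module with finite Betti numbers with the fact that mini-max modules have finite Betti numbers. Since mini-max modules need not be finitely generated and need not be artinian, we cannot appeal directly to any statement about $\Tor$ of two ``nice'' modules; instead we route through the Betti-number hypothesis, exactly as was done for $\Ext$ in Corollary~\ref{cor100501a}.

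First I would invoke Lemma~\ref{lem100206a}, which guarantees that $\beta^R_i(M) < \infty$ for every $i \geq 0$ whenever $M$ is mini-max. This is the only place where the mini-max hypothesis on $M$ enters; once we know the Betti numbers of $M$ are finite, $M$ itself plays no further role.

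Next I would apply Theorem~\ref{thm100320b} with $L = M$. Since $A$ is artinian and $\beta^R_i(M)$ is finite by the previous step, Theorem~\ref{thm100320b} yields that $\Tor{i}{A}{M}$ is artinian (and, by Lemma~\ref{lem100423c}, it is immaterial whether one interprets this over $R$ or over $\comp R$). That is the entire argument; there is no real obstacle, as all the work has been done in Theorem~\ref{thm100320b} (which in turn used Hom-tensor-style adjointness via Remark~\ref{disc100602b} together with Theorem~\ref{thm100308a} and Lemma~\ref{lem100423f}) and in Lemma~\ref{lem100206a}. The corollary is simply the packaged output of these two results in the mini-max setting, parallel to how Corollary~\ref{cor100501a} packaged Theorem~\ref{thm100308a} with Lemma~\ref{lem100206a} for $\Ext$.
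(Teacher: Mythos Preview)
Your proposal is correct and follows exactly the paper's own proof: apply Lemma~\ref{lem100206a} to get $\beta^R_i(M)<\infty$, then invoke Theorem~\ref{thm100320b}. The paper's proof is simply ``Apply Theorem~\ref{thm100320b} and Lemma~\ref{lem100206a},'' and your parallel with Corollary~\ref{cor100501a} is apt.
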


\begin{proof}
Apply Theorem~\ref{thm100320b} and Lemma~\ref{lem100206a}.
\end{proof}

The proofs of the next two results are similar to those of Corollaries~\ref{prop100316a}
and~\ref{prop100315d}.
The first result contains part of Theorem~\ref{intthm100930a} from the introduction.

\begin{cor}\label{thm100424a}
Let $A$ and $L$ be $R$-modules such that 
$R/(\ann_R(A)+\ann_R(L))$ is complete and $A$ is artinian. 
For each 
index $i\geq 0$ such that $\beta^R_i(L)<\infty$, the module
$\Tor{i}{A}{L}$
is artinian and Matlis reflexive over $R$ and  $\comp{R}$.
\end{cor}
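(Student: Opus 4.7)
The plan is to mirror the proof of Corollary~\ref{prop100316a}, substituting Theorem~\ref{thm100320b} for Theorem~\ref{thm100308a} as the engine that produces the relevant finiteness condition.

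First I would invoke Theorem~\ref{thm100320b}: since $A$ is artinian and $\beta^R_i(L)<\infty$, the module $\Tor{i}{A}{L}$ is artinian over $R$. By Lemma~\ref{lem100423c}, this is equivalent to being artinian over $\comp R$, so the ``artinian'' half of the conclusion is immediate over both rings. In particular $\Tor{i}{A}{L}$ is mini-max over $R$ and over $\comp R$.

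Next I would verify the annihilator containment $\ann_R(A)+\ann_R(L)\subseteq\ann_R(\Tor{i}{A}{L})$. This is routine: for $x\in\ann_R(A)$, multiplication by $x$ on $A$ is the zero map, and functoriality of $\Tor{i}{-}{L}$ forces multiplication by $x$ on $\Tor{i}{A}{L}$ to vanish, and similarly for $\ann_R(L)$. Hence $R/\ann_R(\Tor{i}{A}{L})$ is a quotient of the complete local ring $R/(\ann_R(A)+\ann_R(L))$ and is therefore complete.

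Finally I would assemble: since $\Tor{i}{A}{L}$ is mini-max with complete annihilator quotient, Fact~\ref{para1} yields that $\Tor{i}{A}{L}$ is Matlis reflexive over $R$, and Lemmas~\ref{lem100423a}\eqref{lem100423a3} and~\ref{lem100423d} transfer Matlis reflexivity to $\comp R$. The proof has no real obstacle; the only thing to check carefully is the annihilator inclusion, which is essentially automatic from the bifunctorial nature of $\Tor{i}{-}{-}$.
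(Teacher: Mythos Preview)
Your proposal is correct and matches the paper's intended argument exactly: the paper simply says the proof is ``similar to that of Corollary~\ref{prop100316a},'' and you have carried out precisely that translation, replacing Theorem~\ref{thm100308a} by Theorem~\ref{thm100320b}. The only minor point is that Lemma~\ref{lem100423a}\eqref{lem100423a3} (about noetherianness) is not actually needed here---Lemma~\ref{lem100423c} already handles the artinian transfer and Lemma~\ref{lem100423d} alone suffices for Matlis reflexivity over $\comp R$---but this is a harmless redundancy, not a gap.
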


\begin{cor}\label{prop100416a}
Let $A$ and $L$ be  $R$-modules such that 
$R/(\ann_R(A)+\ann_R(L))$ is artinian and $A$ is artinian.
Given an index $i\geq 0$ such that $\beta_i^R(L)<\infty$, one has 
$\len_R(\Tor iA{L})<\infty$.
\end{cor}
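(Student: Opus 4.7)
The plan mirrors the proof of Corollary~\ref{prop100315d}: I would combine an artinianness result for $\Tor{i}{A}{L}$ with a lemma that upgrades artinianness to finite length whenever the module is annihilated by a power of $\m$.

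First, I would invoke Theorem~\ref{thm100320b} directly with the given hypotheses. Since $A$ is artinian and $\beta_i^R(L)<\infty$, that theorem yields at once that $\Tor{i}{A}{L}$ is an artinian $R$-module.

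Next, since both $\ann_R(A)$ and $\ann_R(L)$ annihilate $\Tor{i}{A}{L}$ through the action on either factor, we have the inclusion $\ann_R(A)+\ann_R(L)\subseteq\ann_R(\Tor{i}{A}{L})$. The assumption that $R/(\ann_R(A)+\ann_R(L))$ is artinian (and local, being a quotient of $R$) forces its maximal ideal to be nilpotent, so $\m^t\subseteq\ann_R(A)+\ann_R(L)$ for some integer $t\geq 1$. Consequently $\m^t\cdot\Tor{i}{A}{L}=0$, and applying Lemma~\ref{lem100213b} to the artinian module $\Tor{i}{A}{L}$ delivers the desired conclusion $\len_R(\Tor{i}{A}{L})<\infty$.

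There is essentially no obstacle; the argument is a two-step combination of Theorem~\ref{thm100320b} and Lemma~\ref{lem100213b}, exactly parallel to the passage from Theorem~\ref{thm100308a} to Corollary~\ref{prop100315d}.
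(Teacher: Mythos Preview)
Your proposal is correct and follows precisely the approach indicated by the paper, which states that the proof is similar to that of Corollary~\ref{prop100315d} (i.e., apply Theorem~\ref{thm100320b} and Lemma~\ref{lem100213b}). Your explicit unpacking of why $\m^t$ annihilates $\Tor{i}{A}{L}$ is exactly the intermediate reasoning needed to invoke Lemma~\ref{lem100213b}.
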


\subsection*{$\mathbf{Tor^R_{i}(M,M')}$ is Mini-max}

\begin{thm} \label{prop100424b}
Let $M$ and $M'$ be mini-max $R$-modules, and fix an index $i\geq 0$.
\begin{enumerate}[\rm(a)]
\item \label{prop100424b1}
The $R$-module
$\Tor{i}M{M'}$ is mini-max over $R$.
\item \label{prop100424b2}
If $R/(\ann_R(M)+\ann_R(M'))$ is complete, then 
$\Tor{i}M{M'}$ is  Matlis reflexive over $R$ and $\comp R$.
\item \label{prop100424b3}
If $R/(\ann_R(M)+\ann_R(M'))$ is artinian, then 
$\Tor{i}M{M'}$ has finite length.
\end{enumerate}
\end{thm}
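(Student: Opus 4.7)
The plan is to mirror the structure of Theorem~\ref{prop100421b} by splitting $M'$ into a noetherian and an artinian piece, and then applying the Tor-side results of this section (especially Corollary~\ref{cor100502a}) in place of the Ext-side results used before. Lemma~\ref{lem100617a} tells us the class of mini-max modules is closed under submodules, quotients, and extensions, so it is a legal choice for the class $\catc$ appearing in Lemmas~\ref{lem100430a} and~\ref{lem100614a}.

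For part~\eqref{prop100424b1}, I would fix a noetherian submodule $N'\subseteq M'$ with $M'/N'$ artinian and feed the short exact sequence $0\to N'\to M'\to M'/N'\to 0$ into Lemma~\ref{lem100614a}\eqref{lem100614a3} (with $L=M$ and $\catc$ the class of mini-max $R$-modules). This reduces the problem to showing that $\Tor{i}{M}{N'}$ and $\Tor{i}{M}{M'/N'}$ are each mini-max. The second is artinian, hence mini-max, by Corollary~\ref{cor100502a} applied after the symmetry $\Tor{i}{M}{M'/N'}\cong\Tor{i}{M'/N'}{M}$. The first is mini-max by Lemma~\ref{lem100430a}\eqref{lem100430a4}, again after using Tor-symmetry so the noetherian module $N'$ sits in the first slot.

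For part~\eqref{prop100424b2}, assume $R/(\ann_R(M)+\ann_R(M'))$ is complete. Since $\ann_R(M)+\ann_R(M')\subseteq\ann_R(\Tor{i}{M}{M'})$, the ring $R/\ann_R(\Tor{i}{M}{M'})$ is also complete. Combined with the mini-max conclusion of~\eqref{prop100424b1}, Fact~\ref{para1} gives that $\Tor{i}{M}{M'}$ is Matlis reflexive over $R$, and Lemma~\ref{lem100423d} then upgrades this to Matlis reflexivity over $\comp R$. For part~\eqref{prop100424b3}, if $R/(\ann_R(M)+\ann_R(M'))$ is artinian, then $\m^t\subseteq\ann_R(M)+\ann_R(M')$ for some $t\geq 1$, whence $\m^t\Tor{i}{M}{M'}=0$; invoking~\eqref{prop100424b1} together with Lemma~\ref{lem100213b} then forces $\Tor{i}{M}{M'}$ to have finite length.

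The only real obstacle is the bookkeeping around Tor-symmetry, since Corollary~\ref{cor100502a} and Lemma~\ref{lem100430a}\eqref{lem100430a4} are stated with the artinian or noetherian module in a fixed slot; everything else is a direct transcription of the Ext argument from Theorem~\ref{prop100421b} with the analogous Tor statements in place of the Ext ones.
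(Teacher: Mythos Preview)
Your proof is correct and essentially the same as the paper's: the only cosmetic difference is that the paper splits $M$ (taking a noetherian $N\subseteq M$ with $M/N$ artinian) rather than $M'$, but by Tor-symmetry the two choices are interchangeable and the invocations of Lemma~\ref{lem100430a}\eqref{lem100430a4}, Corollary~\ref{cor100502a}, and Lemma~\ref{lem100614a}\eqref{lem100614a3} line up exactly. Your arguments for parts~\eqref{prop100424b2} and~\eqref{prop100424b3} via Fact~\ref{para1}, Lemma~\ref{lem100423d}, and Lemma~\ref{lem100213b} are precisely what the paper does as well.
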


\begin{proof}
\eqref{prop100424b1}
Choose a noetherian submodule $N\subseteq M$ such that $M/N$ is artinian.
Lemmas~\ref{lem100617a} and~\ref{lem100430a}\eqref{lem100430a4} say
that $\Tor{i}N{M'}$ is mini-max.
Corollary~\ref{cor100502a} implies that $\Tor{i}{M/N}{M'}$ mini-max, so
$\Tor{i}M{M'}$ is mini-max by Lemma~\ref{lem100614a}\eqref{lem100614a3}.

Parts~\eqref{prop100424b2}
and~\eqref{prop100424b3} now follow from Lemmas~\ref{lem100423d}
and~\ref{lem100213b}.
\end{proof}

A special case of the next result is contained in~\cite[Theorem 3]{belshoff:scrtmrm}.

\begin{cor} \label{cor100618b}
Let $M$ and $M'$ be  $R$-modules such that $M$ is mini-max and $M'$ is Matlis reflexive. For each  index $i\geq 0$,
the module
$\Tor{i}M{M'}$ is  Matlis reflexive over $R$ and $\comp R$.
\end{cor}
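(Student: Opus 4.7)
The plan is to reduce directly to Theorem~\ref{prop100424b}\eqref{prop100424b2}, which already delivers Matlis reflexivity of $\Tor{i}{M}{M'}$ over $R$ and $\comp R$ whenever both arguments are mini-max and the combined annihilator quotient $R/(\ann_R(M)+\ann_R(M'))$ is complete. The task is therefore just to verify these two hypotheses from the stronger assumption that $M'$ is Matlis reflexive.

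First I would note that by Fact~\ref{para1}, the Matlis reflexivity of $M'$ is equivalent to $M'$ being mini-max together with $R/\ann_R(M')$ being complete. Thus both $M$ and $M'$ are mini-max, which handles the first required hypothesis.

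Next I would observe that $\ann_R(M')\subseteq\ann_R(M)+\ann_R(M')$, so $R/(\ann_R(M)+\ann_R(M'))$ is a homomorphic image of $R/\ann_R(M')$. Since a quotient of a complete local ring is complete, the ring $R/(\ann_R(M)+\ann_R(M'))$ is complete. This verifies the second hypothesis of Theorem~\ref{prop100424b}\eqref{prop100424b2}, and applying that theorem gives the conclusion that $\Tor{i}{M}{M'}$ is Matlis reflexive over $R$ and $\comp R$.

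There is no real obstacle to overcome here; the statement is essentially a repackaging of Theorem~\ref{prop100424b}\eqref{prop100424b2} under the specialization that $M'$ is Matlis reflexive rather than merely mini-max. The only point that requires a moment's thought is the passage from ``$R/\ann_R(M')$ complete'' to ``$R/(\ann_R(M)+\ann_R(M'))$ complete,'' which follows immediately from the quotient remark above.
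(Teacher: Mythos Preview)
Your proof is correct and follows essentially the same approach as the paper, which simply cites Theorem~\ref{prop100424b}\eqref{prop100424b2} together with Fact~\ref{para1}. Your additional remark that $R/(\ann_R(M)+\ann_R(M'))$ is complete as a quotient of $R/\ann_R(M')$ just makes explicit what the paper leaves implicit.
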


\begin{proof}
Apply Theorem~\ref{prop100424b}\eqref{prop100424b2}
and Fact~\ref{para1}.
\end{proof}

\subsection*{Length Bounds for $\mathbf{A\otimes_RL}$}

%
%

\begin{lem} \label{lemma1}
Let $A$ be an artinian module, and let $\fa$ be a proper ideal of $R$.
Fix an integer $t\geq 0$ such that $\mathfrak{a}^tA=\mathfrak{a}^{t+1}A$. 
Given an $\fa$-torsion $R$-module $L$,
one has 
$$A\otimes_RL\cong (A/\mathfrak{a}^tA)\otimes_R L
\cong\Otimes{(A/\mathfrak{a}^tA)}{(L/\mathfrak{a}^tL)}.$$
\end{lem}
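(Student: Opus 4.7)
The plan is to establish the two isomorphisms in turn, each via right-exactness of the tensor product combined with the stabilization hypothesis $\fa^tA=\fa^{t+1}A$. The fundamental observation, which I will use throughout, is that iterating this stabilization yields $\fa^tA=\fa^nA$ for every $n\geq t$; this is what couples the hypothesis on $A$ to the $\fa$-torsion hypothesis on $L$.

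For the first isomorphism $A\otimes_RL\cong(A/\fa^tA)\otimes_RL$, I would tensor the short exact sequence $0\to\fa^tA\to A\to A/\fa^tA\to 0$ with $L$ to get the right exact sequence
\[
\fa^tA\otimes_RL\to A\otimes_RL\to(A/\fa^tA)\otimes_RL\to 0,
\]
and reduce to showing that the leftmost map vanishes on pure tensors. Given $a\otimes x$ with $a\in\fa^tA$ and $x\in L$, the $\fa$-torsion hypothesis supplies some $n\geq t$ with $\fa^nx=0$, and the observation above gives $a\in\fa^tA=\fa^nA$. Writing $a=\sum_j b_jc_j$ with $b_j\in\fa^n$ and $c_j\in A$, the routine manipulation $a\otimes x=\sum_j c_j\otimes b_jx=0$ completes this step.

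For the second isomorphism $(A/\fa^tA)\otimes_RL\cong(A/\fa^tA)\otimes_R(L/\fa^tL)$, I would invoke the general fact that tensoring with a module annihilated by an ideal $I$ factors through killing $I$ on the other factor. Since $\fa^t$ annihilates $A/\fa^tA$, the canonical surjection $(A/\fa^tA)\otimes_RL\onto(A/\fa^tA)\otimes_R(L/\fa^tL)$ has kernel generated by tensors of the form $\bar a\otimes(dy)$ with $\bar a\in A/\fa^tA$, $d\in\fa^t$, $y\in L$, and each such tensor vanishes via $\bar a\otimes(dy)=(d\bar a)\otimes y=0$.

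There is no substantial obstacle here; the whole argument is routine once the stabilization $\fa^tA=\fa^nA$ for $n\geq t$ is in hand. The only subtle point is choosing $n$ large enough in the first isomorphism so that both $\fa^nx=0$ and $a\in\fa^nA$ hold simultaneously, and this is automatic from the $\fa$-torsion of $L$ together with the stabilization of $\fa^nA$. This mirrors the structure of Lemma~\ref{lem100312b}, the $\Hom$-analogue, where the same interplay between a stabilization on one side and a torsion hypothesis on the other is exploited.
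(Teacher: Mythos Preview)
Your proof is correct and follows essentially the same route as the paper. The only cosmetic differences are that the paper shows $(\fa^tA)\otimes_RL=0$ outright (rather than just that the induced map into $A\otimes_RL$ vanishes), and for the second isomorphism it inserts the factor $R/\fa^t$ and uses associativity of tensor product in place of your element-level check; both are standard reformulations of the same idea.
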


\begin{proof}
The isomorphism $(A/\mathfrak{a}^tA)\otimes_R L
\cong\Otimes{(A/\mathfrak{a}^tA)}{(L/\mathfrak{a}^tL)}$
is from the following:
\begin{align*}
(A/\mathfrak{a}^tA)\otimes_R L
&\cong\Otimes{[\Otimes{(A/\mathfrak{a}^tA)}{(R/\mathfrak{a}^t)}]}{L}\\
&\cong\Otimes{(A/\mathfrak{a}^tA)}{[\Otimes{(R/\mathfrak{a}^t)}{L}]}\\
&\cong\Otimes{(A/\mathfrak{a}^tA)}{(L/\mathfrak{a}^tL)}.
\end{align*}
For the isomorphism $A\otimes_RL\cong (A/\mathfrak{a}^tA)\otimes_R L$,
consider the exact sequence:
$$0\to\fa^tA\to A\to A/\fa^tA\to 0.$$
The exact sequence induced by $-\otimes_RL$ has the form
\begin{equation}\label{lemma1a}
(\fa^tA)\otimes_RL\to A\otimes_RL\to (A/\fa^tA)\otimes_RL\to 0.
\end{equation}
The fact that $L$ is $\fa$-torsion and  $\fa^tA=\fa^{t+i}A$ for all $i\geq 1$ implies that
$\Otimes{(\fa^t A)}{L}=0$, so the sequence~\eqref{lemma1a} yields the desired isomorphism.
\end{proof}

The example $\Otimes{E}{R}\cong R$ shows that the $\m$-torsion assumption 
on $L$ is necessary in the next result.

\begin{thm} \label{cor28}
Let $A$  be an artinian $R$-module, and let $L$ be an $\m$-torsion $R$-module.
Fix an integer $t\geq 0$ such that $\m^tA=\m^{t+1}A$.
Then there are inequalities
\begin{align}
\label{cor28a}
\len_R(A\otimes_RL)
&\leq \len_R\left(A/\m^tA\right)\beta^R_0(L)\\
\label{cor28b}
\len_R(A\otimes_RL) 
&\leq \beta^R_0(A)\len_R\left(L/\m^tL\right).
\end{align}
Here we use the convention $0\cdot\infty=0$.
\end{thm}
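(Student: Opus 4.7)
The plan is to use Lemma~\ref{lemma1} to reduce the problem to a length computation over $R/\m^t$. The isomorphism $A\otimes_R L\cong (A/\m^tA)\otimes_R(L/\m^tL)$ presents the tensor product as one in which both factors are annihilated by $\m^t$. Furthermore $A/\m^tA$ is artinian (a quotient of $A$) and annihilated by $\m^t$, so it has finite length by Lemma~\ref{lem100213b} and thus admits a composition series over $R/\m^t$ whose subquotients are isomorphic to $k$.

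First I would dispatch the degenerate cases. If $\beta^R_0(A)=0$, then $A/\m A=0$, i.e., $A=\m A$, and iterating yields $A=\m^tA$; hence $A\otimes_R L=0$ and both inequalities hold trivially under the $0\cdot\infty=0$ convention. The same reasoning handles $\beta^R_0(L)=0$ for the first inequality: then $L=\m L$ forces $L/\m^tL=0$.

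In the non-degenerate case, set $r=\len_R(A/\m^tA)$ and induct on $r$ along a composition series of $A/\m^tA$. The right-exactness of $-\otimes_R(L/\m^tL)$ together with the subadditivity of length along short exact sequences yields
$$\len_R\bigl((A/\m^tA)\otimes_R(L/\m^tL)\bigr)\leq r\cdot\len_R\bigl(k\otimes_R(L/\m^tL)\bigr)=r\cdot\dim_k(L/\m L)=r\cdot\beta^R_0(L),$$
which is the first inequality. For the second, if $\len_R(L/\m^tL)=\infty$ and $\beta^R_0(A)>0$ the bound is automatic; otherwise set $m=\len_R(L/\m^tL)$ and run the analogous induction on $m$ along a composition series of $L/\m^tL$, using that $(A/\m^tA)\otimes_R k=A/\m A$ has dimension $\beta^R_0(A)$.

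The only delicate point is the bookkeeping in the degenerate cases under the $0\cdot\infty=0$ convention; the inductive length estimate itself is routine once the reduction via Lemma~\ref{lemma1} is in place.
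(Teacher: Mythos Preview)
Your argument is correct and follows the same route as the paper: reduce via Lemma~\ref{lemma1} to $(A/\m^tA)\otimes_R(L/\m^tL)$, dispose of the degenerate cases, and then bound the length by induction along a filtration of the finite-length factor. The paper phrases the inductive step as ``induction on $\len_R(A/\m^tA)$ and $\beta_0^R(L)$'' without spelling it out, while you make the composition-series argument explicit, but the content is the same.
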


\begin{proof}
From Lemma~\ref{lemma1} we have
\begin{equation}\label{cor28c}
A\otimes_RL
\cong\Otimes{(A/\mathfrak{m}^tA)}{(L/\mathfrak{m}^tL)}.
\end{equation}
Lemmas~\ref{lem100206a} and~\ref{lem100213b}
imply that $\len_R(A/\m^tA)<\infty$
and $\beta^R_0(A)<\infty$.

For the degenerate cases, first note that $\len_R(A/\m^tA)=0$ if and only if $\beta^R_0(A)=0$.
When $\len_R(A/\m^tA)=0$, the  isomorphism~\eqref{cor28c}
implies that $A\otimes_RL=0$; hence the desired inequalities.
Thus, we assume without loss of generality that 
$1\leq\beta^R_0(A)\leq\len_R(A/\m^tA)$.
Further, we assume that $\beta^R_0(L)<\infty$.

The isomorphism~\eqref{cor28c} provides the first step in the next sequence:
$$\len_R(A\otimes_RL)=\len_R(\Otimes{(A/\mathfrak{m}^tA)}{(L/\mathfrak{m}^tL)})
\leq\len_R(A/\mathfrak{m}^tA)\beta^R_0(L).$$
The second step in this sequence can be verified by induction on 
$\len_R(A/\mathfrak{m}^tA)$ and $\beta^R_0(L)$.
This explains the inequality~\eqref{cor28a}, and~\eqref{cor28b} is verified similarly.
\end{proof}

The next result contains part of Theorem \ref{intthm100928a} from the introduction.
Example~\ref{ex100420a} shows that 
one should not expect to have $\len_R(\Tor{i}{A}{A'})<\infty$ when $i\geq 1$.

\begin{cor} \label{cor100416b}
If $A$ and $A'$ are  artinian $R$-modules,
then 
$\len_R(A\otimes_RA')<\infty$.
\end{cor}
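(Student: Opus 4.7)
The plan is to apply Theorem~\ref{cor28} to $A$ together with $L = A'$ and verify that both factors on the right-hand side of the bound are finite. Since $A'$ is artinian, Fact~\ref{para0z}\eqref{para0z1} gives that $A'$ is $\m$-torsion, so the hypothesis of Theorem~\ref{cor28} is satisfied.

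First I would fix an integer $t \geq 0$ with $\m^{t}A = \m^{t+1}A$; such a $t$ exists because the descending chain $\m A \supseteq \m^{2}A \supseteq \cdots$ of submodules of the artinian module $A$ must stabilize. Theorem~\ref{cor28} then yields the inequality
$$\len_R(A \otimes_R A') \leq \beta_0^R(A)\,\len_R(A'/\m^{t}A'),$$
so the task reduces to showing that each factor on the right is finite.

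For the Betti number, I would invoke Lemma~\ref{lem100206a}: any artinian module is mini-max, so $\beta_0^R(A) < \infty$. For the length of $A'/\m^{t}A'$, observe that this module is a quotient of the artinian module $A'$ and is annihilated by $\m^{t}$, so Lemma~\ref{lem100213b} (the equivalence of artinian and finite length for modules killed by a power of $\m$) shows that $\len_R(A'/\m^{t}A') < \infty$. Combining these two facts gives $\len_R(A \otimes_R A') < \infty$, as desired.

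I do not anticipate any real obstacle here: once Theorem~\ref{cor28} is in hand, the argument is simply a matter of checking that both quantities on the right-hand side of the displayed bound are finite, which is immediate from results already established in Sections~\ref{sec1} and~\ref{sec3}.
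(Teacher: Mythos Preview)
Your proof is correct and follows essentially the same approach as the paper: apply Theorem~\ref{cor28} (with $L=A'$, which is $\m$-torsion since it is artinian), then use Lemmas~\ref{lem100206a} and~\ref{lem100213b} to bound the two factors in the resulting inequality. You spelled out the details (existence of $t$, which inequality from Theorem~\ref{cor28} you use, why each factor is finite), but the underlying argument is identical to the paper's one-line proof.
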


\begin{proof}
Apply Theorem~\ref{cor28} and Lemmas~\ref{lem100206a} and~\ref{lem100213b}.
\end{proof}

\section{The Matlis dual of $\Ext{i}{L}{L'}$} \label{sec100601a}

This section contains the proof of Theorem~\ref{intthm100930b} from the introduction;
see Corollary~\ref{cor100618c}.
Most of the section is devoted to  technical results for use in the proof.

\begin{lem} \label{lem100317a}
Let $L$ be an $R$-module.
If $I$ is an $R$-injective resolution of $L$,
and $J$ is an $\comp R$-injective resolution of $\Otimes{\comp R}{L}$,
then there is a homotopy equivalence $\Gamma_{\m}(I)\xra\sim\Gamma_{\m}(J)=\Gamma_{\m\comp R}(J)$.
\end{lem}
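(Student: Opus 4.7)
The plan is to exhibit both $\Gamma_\m(I)$ and $\Gamma_\m(J)$ as bounded-below complexes of $\comp R$-injective modules that compute $\RG{m}{L}$, and then invoke the fact that such complexes are unique up to homotopy equivalence. The equality $\Gamma_\m(J)=\Gamma_{\m\comp R}(J)$ follows directly: $J$ is an $\comp R$-complex, and since $\m^n\comp R=(\m\comp R)^n$ for every $n$, an element of $J^i$ is killed by a power of $\m$ precisely when it is killed by the corresponding power of $\m\comp R$.

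To construct a comparison map, I would use that flatness of $\comp R$ makes $\comp R\otimes_R I$ an exact complex of $\comp R$-modules augmented to $\comp R\otimes_R L$. Since $J$ is an $\comp R$-injective resolution of $\comp R\otimes_R L$, the identity lifts to an $\comp R$-linear chain map $\comp R\otimes_R I\to J$, and precomposing with the canonical $R$-linear map $I\to\comp R\otimes_R I$ yields an $R$-linear chain map $I\to J$. Applying $\Gamma_\m(-)$ then gives a chain map $\phi\colon\Gamma_\m(I)\to\Gamma_\m(J)$.

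To see that $\phi$ is a quasi-isomorphism, I would compute $\HH^i(\Gamma_\m(I))=\HH^i_\m(L)$ and $\HH^i(\Gamma_\m(J))=\HH^i_{\m\comp R}(\comp R\otimes_R L)$. Flat base change for local cohomology gives $\HH^i_{\m\comp R}(\comp R\otimes_R L)\cong\comp R\otimes_R\HH^i_\m(L)$, and Lemma~\ref{para0'''}\eqref{para0e} identifies this with $\HH^i_\m(L)$ because local cohomology with respect to $\m$ is $\m$-torsion. Tracing through the construction shows that $\phi$ induces exactly this composite isomorphism on cohomology.

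Finally, I would upgrade the quasi-isomorphism to a homotopy equivalence. Each $\Gamma_\m(I^j)$ is a direct sum of copies of $E_R(k)=E$, because the indecomposable injective $E_R(R/\p)$ has trivial $\m$-torsion whenever $\p\ne\m$; similarly each $\Gamma_{\m\comp R}(J^j)$ is a direct sum of copies of $E_{\comp R}(k)=E$. Since $E$ is injective over $\comp R$, both $\Gamma_\m(I)$ and $\Gamma_\m(J)$ are bounded-below complexes of injective $\comp R$-modules, and a quasi-isomorphism between such complexes is automatically a homotopy equivalence in the category of $\comp R$-complexes. The main obstacle is verifying that $\phi$ genuinely induces the flat base change isomorphism on cohomology rather than only some abstract isomorphism, which requires careful bookkeeping of the natural transformations involved.
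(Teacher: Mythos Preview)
Your proof is correct and follows essentially the same strategy as the paper: construct a chain map $I\to J$ over the natural map $L\to\comp R\otimes_R L$, apply $\Gamma_\m$, identify the induced map on cohomology with the local cohomology comparison $\HH^i_\m(L)\to\HH^i_\m(\comp R\otimes_R L)$ (which is an isomorphism), and then upgrade the quasi-isomorphism to a homotopy equivalence using that both complexes consist of injectives.

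There are only two cosmetic differences. First, the paper builds the comparison map by observing directly that every injective $\comp R$-module is injective over $R$ (via the adjunction $\Hom{-}{J'}\cong\Hom[\comp R]{\comp R\otimes_R-}{J'}$), so one may lift $I\to J$ in one step; you instead factor through $\comp R\otimes_R I$, which is an equally valid route to the same map. Second, for the upgrade step the paper views $\Gamma_\m(I)$ and $\Gamma_\m(J)$ as complexes of injective $R$-modules, whereas you view them as complexes of injective $\comp R$-modules. Your version is fine once you note (via Lemma~\ref{lem100206c1}\eqref{lem100206c1b}) that any $R$-linear map between $\m$-torsion modules is automatically $\comp R$-linear, so $\phi$ really is a map of $\comp R$-complexes; you might make that explicit. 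Your closing worry about whether $\phi$ induces the ``right'' isomorphism on cohomology is exactly the compatibility the paper also asserts, and it follows from naturality of the identifications $\HH^i(\Gamma_\m(-))\cong\HH^i_\m(-)$.
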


\begin{proof}
Each injective $\comp R$-module $J'$ is injective over $R$;
this follows from the  isomorphism
$
\Hom{-}{J'}\cong\Hom{-}{\Hom[\comp R]{\comp R}{J'}}\cong\Hom[\comp R]{\Otimes{\comp R}{-}}{J'}
$
since $\comp R$ is flat over $R$.
Hence, there is a lift $f\colon I\to J$ of the natural map $\xi\colon L\to \Otimes{\comp R}L$.
This lift is a chain map of $R$-complexes. 

We show that the induced map 
$\Gamma_{\m}(f)\colon \Gamma_{\m}(I)\to \Gamma_{\m}(J)=\Gamma_{\m\comp R}(J)$
is a homotopy equivalence.
As  $\Gamma_{\m}(I)$ and $\Gamma_{\m}(J)$
are bounded above complexes of injective $R$-modules, it suffices to show that
$\Gamma_{\m}(f)$ induces an isomorphism
on  homology in each degree.
The induced map on homology is compatible with the following sequence:
$$\HH^i(\Gamma_{\m}(I))\cong\HH^{i}_{\m}(L)\xra[\cong]{\HH^{i}_{\m}(\xi)}
\HH^{i}_{\m}(\Otimes{\comp R}L)\cong\HH^i(\Gamma_{\m}(J)).
$$
The map 
$\HH^{i}_{\m}(\xi)\colon\HH^{i}_{\m}(L)\to \HH^{i}_{\m}(\Otimes{\comp R}L)$
is an isomorphism
(see the proof of~\cite[Proposition~3.5.4(d)]{bruns:cmr}) so we have the desired 
homotopy equivalence.
\end{proof}

\begin{lem}\label{lem100317b}
Let $L$ and $L'$ be $R$-modules
such  that $L$ is $\m$-torsion.
Then for each index $i\geq 0$, there are $\comp R$-module isomorphisms
$$\Ext{i}{L}{L'}
\cong\Ext{i}{L}{\Otimes{\comp R}{L'}}
\cong\Ext[\comp{R}]{i}{L}{\Otimes{\comp R}{L'}}.$$
\end{lem}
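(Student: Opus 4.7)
My plan is to reduce all three complexes to the same homotopy type by passing to the $\m$-torsion part. Let $I$ be an $R$-injective resolution of $L'$ and let $J$ be an $\comp{R}$-injective resolution of $\Otimes{\comp R}{L'}$. As noted in the proof of Lemma~\ref{lem100317a}, each term of $J$ is injective over $R$ as well (since $\comp R$ is $R$-flat), so $J$ is simultaneously an $R$-injective resolution of $\Otimes{\comp R}{L'}$. This means all three Ext modules in question can be written as cohomology of Hom complexes built from $I$ and $J$.

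Next, I would use Lemma~\ref{lem100206c1}, which applies because $L$ is $\m$-torsion. Part~\eqref{lem100206c1a} lets me replace $I$ and $J$ by their $\m$-torsion subcomplexes:
\begin{align*}
\Hom{L}{I} &\cong \Hom{L}{\Gamma_{\m}(I)}, &
\Hom{L}{J} &\cong \Hom{L}{\Gamma_{\m}(J)}, \\
\Hom[\comp R]{L}{J} &\cong \Hom[\comp R]{L}{\Gamma_{\m\comp R}(J)} = \Hom[\comp R]{L}{\Gamma_{\m}(J)},
\end{align*}
and part~\eqref{lem100206c1b} identifies $R$-Hom with $\comp R$-Hom when both arguments are $\m$-torsion, giving $\Hom{L}{\Gamma_{\m}(I)} = \Hom[\comp R]{L}{\Gamma_{\m}(I)}$ and $\Hom{L}{\Gamma_{\m}(J)} = \Hom[\comp R]{L}{\Gamma_{\m}(J)}$.

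The key input is then Lemma~\ref{lem100317a} applied to $L'$: it provides a homotopy equivalence $\Gamma_{\m}(I)\xra{\simeq}\Gamma_{\m}(J)$ of complexes. Applying the functor $\Hom[\comp R]{L}{-}$ preserves homotopy equivalences, so we obtain a homotopy equivalence
$$\Hom[\comp R]{L}{\Gamma_{\m}(I)}\xra{\simeq}\Hom[\comp R]{L}{\Gamma_{\m}(J)}.$$
Combining this with the identifications above and passing to cohomology in degree $i$ yields the chain of $\comp R$-module isomorphisms claimed in the statement.

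The only subtle point to verify is that the zig-zag of $R$-module isomorphisms is actually $\comp R$-linear; this is automatic from Lemma~\ref{lem100206c1}\eqref{lem100206c1b}, which identifies the relevant $R$-Hom groups with $\comp R$-Hom groups (not just abstractly, but via the identity map on the underlying set). Once this is noted, there is no real obstacle and the proof reduces to assembling the diagram above.
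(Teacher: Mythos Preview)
Your proof is correct and follows essentially the same approach as the paper: both arguments set up the same resolutions $I$ and $J$, use Lemma~\ref{lem100206c1} to pass to the $\m$-torsion subcomplexes, invoke the homotopy equivalence $\Gamma_{\m}(I)\simeq\Gamma_{\m}(J)$ from Lemma~\ref{lem100317a}, and then observe $\comp R$-linearity via Lemma~\ref{lem100206c1}\eqref{lem100206c1b}. Your version is slightly more explicit about why $J$ also serves as an $R$-injective resolution of $\Otimes{\comp R}{L'}$, but otherwise the two proofs are the same.
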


\begin{proof}
Let $I$ be
an $R$-injective resolution of $L'$, and
let $J$ be
an $\comp R$-injective resolution of $\Otimes{\comp R}{L'}$.
Because $L$ is $\m$-torsion,
Lemma~\ref{lem100206c1}\eqref{lem100206c1a}
explains the first, third and sixth steps in the next display:
\begin{align*}
\Hom{L}{I}
&\cong\Hom{L}{\Gamma_{\m}(I)}
\sim\Hom{L}{\Gamma_{\m}(J)}
\cong\Hom{L}{J}\\
 \Hom{L}{\Gamma_{\m}(J)}
&=\Hom{L}{\Gamma_{\m\comp R}(J)}
=\Hom[\comp R]{L}{\Gamma_{\m\comp R}(J)}
\cong\Hom[\comp R]{L}{J}.
\end{align*}
The homotopy equivalence in the second step is from Lemma~\ref{lem100317a}.
The fifth step is from Lemma~\ref{lem100206c1}\eqref{lem100206c1b}.
Since $L$ is $\m$-torsion, it is an $\comp R$-module, so the isomorphisms
and the homotopy equivalence in this sequence are $\comp R$-linear.
In particular, the complexes $\Hom{L}{I}$ and $\Hom{L}{J}$ and $\Hom[\comp R]{L}{J}$
have isomorphic cohomology over $\comp R$, so one has the desired isomorphisms.
\end{proof}

The next result contains Theorem~\ref{intthm100928c} from the introduction.
It shows, for instance, that given artinian $R$-modules $A$ and $A'$,
there are noetherian $\comp R$-modules $N$ and $N'$ such that
$\Ext{i}{A}{A'}\cong\Ext[\comp R]{i}{N}{N'}$;
thus, it provides an alternate proof of Corollary~\ref{cor100501a}.

\begin{thm}\label{prop100317a}
Let $A$ and $M$ be $R$-modules
such  that $A$ is artinian and $M$ is mini-max. Then for each index $i\geq 0$, we have
$\Ext{i}{A}{M}\cong\Ext[\comp{R}]{i}{\md M}{\md A}.$
\end{thm}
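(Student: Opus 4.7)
The plan is to reduce both sides to computations over $\comp R$ and then identify them with the Matlis dual of a common $\Tor$ module. Since $A$ is artinian (hence $\m$-torsion), Lemma~\ref{lem100317b} yields
$$\Ext{i}{A}{M} \cong \Ext[\comp R]{i}{A}{\Otimes{\comp R}{M}}.$$
Set $M' := \Otimes{\comp R}{M}$. Because $M$ is mini-max over $R$, Fact~\ref{para1} shows that $M'$ is Matlis reflexive over $\comp R$, so $M' \cong \Hom[\comp R]{\Hom[\comp R]{M'}{E}}{E}$. Hom-tensor adjointness identifies $\Hom[\comp R]{M'}{E} = \Hom[\comp R]{\Otimes{\comp R}{M}}{E} \cong \Hom{M}{E} = \md{M}$, and Lemma~\ref{lem100206c1}\eqref{lem100206c1b} (applied with $\fa = \m$) gives $\md{A} = \Hom[\comp R]{A}{E}$ since $A$ is $\m$-torsion.

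I would then apply the tensor–Hom adjoint isomorphism of Remark~\ref{disc100602b}, read over $\comp R$, twice. Using $M' \cong \Hom[\comp R]{\md{M}}{E}$ with $L = A$ and $L' = \md{M}$ gives
$$\Ext[\comp R]{i}{A}{M'} \cong \Hom[\comp R]{\Tor[\comp R]{i}{A}{\md{M}}}{E}.$$
Taking instead $L = \md{M}$ and $L' = A$, and using $\md{A} = \Hom[\comp R]{A}{E}$, gives
$$\Ext[\comp R]{i}{\md{M}}{\md{A}} \cong \Hom[\comp R]{\Tor[\comp R]{i}{\md{M}}{A}}{E}.$$
Commutativity of $\Tor$ identifies the two right-hand sides, and combining this with the reduction in the first display produces the desired isomorphism $\Ext{i}{A}{M} \cong \Ext[\comp R]{i}{\md{M}}{\md{A}}$.

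The main obstacle is ensuring that a statement phrased in terms of $R$-Matlis duals can be proved by working entirely over $\comp R$; the crucial input is Matlis reflexivity of $M' = \Otimes{\comp R}{M}$ over $\comp R$, which is exactly where the mini-max hypothesis on $M$ enters (via Fact~\ref{para1}) and which licenses writing $M'$ as the $\comp R$-Matlis dual of $\md{M}$. The remaining work is routine bookkeeping: matching $R$-Matlis duals with $\comp R$-Matlis duals through Lemma~\ref{lem100206c1}\eqref{lem100206c1b} and Hom-tensor adjointness, plus one appeal to the symmetry of $\Tor$.
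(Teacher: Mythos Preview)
Your argument is correct and follows essentially the same route as the paper: reduce to $\comp R$ via Lemma~\ref{lem100317b}, use that $\Otimes{\comp R}{M}$ is Matlis reflexive over $\comp R$, and identify the $\comp R$-Matlis duals with $\md M$ and $\md A$. The only cosmetic difference is that where the paper handles the complete case by taking a free resolution $F$ of $A$ and manipulating $\Hom{F}{M}\cong\Hom{F}{\mdd M}\cong\Hom{\md M}{\md F}$ directly, you invoke the packaged isomorphism $\Ext{i}{L}{\md{L'}}\cong\md{\Tor{i}{L}{L'}}$ from Remark~\ref{disc100602b} twice and use symmetry of $\Tor$; this is the same Hom-tensor adjointness computation carried out at the level of modules rather than complexes.
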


\begin{proof}
Case 1: $R$ is complete. Let $F$ be a free resolution of $A$. 
It follows that each $F_i$ is flat, so the complex
$\md F$ is an injective resolution of $\md A$; see~\cite[Theorem~3.2.9]{enochs:rha}.
We obtain the isomorphism $\Ext{i}{A}{M}
\cong\Ext{i}{\md M}{\md A}$ by taking cohomology
in the next sequence:
\begin{align*}
\Hom{F}{M}
&\cong\Hom{F}{\mdd{M}}\cong\Hom{\md{M}}{\md{F}}.
\end{align*}
The first step follows from the fact that
$M$ is Matlis reflexive; see Fact~\ref{para1}. 
The second step is from
Hom-tensor adjointness

Case 2: the general case.
The first step below is  from Lemma~\ref{lem100317b}:
$$\Ext{i}{A}{M}\cong\Ext[\comp{R}]{i}{A}{\Otimes{\comp R}{M}}
\cong\Ext[\comp{R}]{i}{(\Otimes{\comp R}{M})^{v}}{A^{v}}
\cong\Ext[\comp{R}]{i}{\md M}{\md A}.$$
Here $(-)^{v}=\Hom[\comp R]{-}{E}$.  
Since $M$ is mini-max,
it follows that $\Otimes{\comp R}{M}$ is mini-max over $\comp R$. 
Thus, the second step is from Case 1.
For the third step use Hom-tensor adjointness 
and Lemma~\ref{lem100206c1}\eqref{lem100206c1b} to see that $(\Otimes{\comp R}{M})^{v}\cong M^{\vee}$
and $A^{v}\cong A^{\vee}$.
\end{proof}

\begin{fact}\label{fact100604a}
Let $L$ and $L'$ be $R$-modules, and fix an index $i\geq 0$. Then the following diagram commutes,
where the unlabeled isomorphism is from Remark~\ref{disc100602b}:
$$\xymatrix@C=1.5cm{
\Ext{i}{L'}{L}
\ar[r]^-{\bidual{\Ext{i}{L'}{L}}}
\ar[d]_{\Ext{i}{L'}{\bidual L}}
& \mdd{\Ext{i}{L'}{L}}
\ar[d]^{\md{(\Theta_{L'L}^{i})}}
\\
\Ext{i}{L'}{\mdd L}
\ar[r]^-{\cong}
&\md{\Tor{i}{L'}{\md L}}.
}$$
\end{fact}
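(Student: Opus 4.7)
The plan is to verify the commutativity by computing both composites at the chain level using a projective resolution $F$ of $L'$. Via $F$ we can compute all four objects in the diagram, and the unlabeled bottom isomorphism is induced in cohomology by the chain-level Hom-tensor adjointness
\[
\Hom{F}{\mdd L} = \Hom{F}{\Hom{\md L}{E}} \xra\cong \md{(F \otimes \md L)}, \quad g \mapsto g^\flat,
\]
where $g^\flat(x \otimes \psi) = g(x)(\psi)$.

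Tracing the bottom path: a class $[\alpha] \in \Ext{i}{L'}{L}$ is represented by a cocycle $\alpha\colon F_i \to L$; the map $\Ext{i}{L'}{\bidual L}$ carries it to $[\bidual L \circ \alpha]$, and the adjointness isomorphism sends this to the cocycle $x \otimes \psi \mapsto \bidual L(\alpha(x))(\psi) = \psi(\alpha(x))$. Hence the bottom path sends $[\alpha]$ to the functional on $\Tor{i}{L'}{\md L}$ that evaluates a cycle $\sum_k x_k \otimes \psi_k$ to $\sum_k \psi_k(\alpha(x_k))$.

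For the top path, the key lemma is that $\Theta^i_{L'L}$ also admits a description via $F$: on cohomology it agrees with the map induced by the Hom-evaluation
\[
\theta_{F,L,E}\colon F \otimes \md L \to \md{\Hom{F}{L}}, \quad (x \otimes \psi)(\phi) = \psi(\phi(x)).
\]
Granting this description, for $\beta = [\sum_k x_k \otimes \psi_k]$ we have $\Theta^i_{L'L}(\beta)([\alpha]) = \sum_k \psi_k(\alpha(x_k))$, so the top path $\md{(\Theta^i_{L'L})} \circ \bidual{\Ext{i}{L'}{L}}$ sends $[\alpha]$ to the functional $\beta \mapsto \Theta^i_{L'L}(\beta)([\alpha])$, which coincides with the bottom path.

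The main obstacle is establishing the alternate description of $\Theta^i_{L'L}$ via $F$, since Definition~\ref{defn100602b} uses an injective resolution $J$ of $L$. The cleanest route is via the bicomplex $F \otimes \md J$ and its total complex: the two quasi-isomorphisms from $\text{Tot}(F \otimes \md J)$ to $L' \otimes \md J$ and to $F \otimes \md L$ intertwine the relevant chain-level $\theta$ maps into $\md{\Hom{L'}{J}}$, $\md{\Hom{F}{J}}$, and $\md{\Hom{F}{L}}$, each of which computes $\md{\Ext{i}{L'}{L}}$ in cohomology. A diagram chase then identifies the two descriptions of $\Theta^i_{L'L}$, and the rest of the proof is the chain-level computation already carried out.
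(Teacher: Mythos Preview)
The paper records this as a Fact without proof, so there is no argument in the paper to compare against; your write-up supplies what the paper leaves to the reader. Your approach is correct: computing both composites via a projective resolution $F$ of $L'$ and reducing to the formula
\[
[\alpha]\;\longmapsto\;\Bigl(\textstyle\bigl[\sum_k x_k\otimes\psi_k\bigr]\mapsto\sum_k\psi_k(\alpha(x_k))\Bigr)
\]
is exactly the right check. The one substantive step is your ``key lemma'' that the paper's $\Theta^i_{L'L}$, defined via an injective resolution $J$ of $L$, agrees on homology with the map induced by $\theta_{FLE}$; the bicomplex sketch you give (naturality of $\theta$ applied to $\operatorname{Tot}(F\otimes\md J)$ and $\md{\operatorname{Tot}(\Hom{F}{J})}$) is the standard balancing argument and goes through.

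There is a shorter route that avoids the bicomplex entirely: do everything with $J$. Since $\md J$ is a flat resolution of $\md L$, the complex $\mdd J$ is an injective resolution of $\mdd L$, and the biduality map $\bidual{J}\colon J\to\mdd J$ lifts $\bidual L$. All four corners of the square are then computed by the complexes $\Hom{L'}{J}$, its double dual, $\Hom{L'}{\mdd J}\cong\md{(L'\otimes\md J)}$, and $\md{(L'\otimes\md J)}$, and the commutativity reduces to the single identity
\[
\bidual{J^i}(\alpha(x))(\psi)=\psi(\alpha(x))=\theta_{L'JE}(x\otimes\psi)(\alpha),
\]
with no comparison between $F$- and $J$-models needed. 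Your approach has the virtue of making the unlabeled adjointness isomorphism from Remark~\ref{disc100602b} transparent (it is most naturally written with a projective resolution of $L'$), at the cost of the balancing step; the $J$-only approach trades that step away but asks the reader to accept that the adjointness isomorphism is canonical.
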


\begin{lem}\label{lem100604a}
Let $L$ be an $R$-module, and fix an index $i\geq 0$.
If $\mu^i_R(L)<\infty$, then the map
$\Ext{i}{k}{\bidual L}\colon \Ext{i}{k}{L}\to\Ext{i}{k}{\mdd L}$
is an isomorphism.
\end{lem}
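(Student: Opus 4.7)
The strategy is to chase the commutative diagram from Fact~\ref{fact100604a} with $L'=k$:
\[
\xymatrix@C=1.5cm{
\Ext{i}{k}{L}
\ar[r]^-{\bidual{\Ext{i}{k}{L}}}
\ar[d]_{\Ext{i}{k}{\bidual L}}
& \mdd{\Ext{i}{k}{L}}
\ar[d]^{\md{(\Theta_{kL}^{i})}}
\\
\Ext{i}{k}{\mdd L}
\ar[r]^-{\cong}
&\md{\Tor{i}{k}{\md L}}.
}
\]
To conclude that the left vertical map is an isomorphism, it suffices to verify that the top horizontal arrow and the right vertical arrow are isomorphisms (the bottom arrow being already known to be an isomorphism from Remark~\ref{disc100602b}).

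First I would argue that $\Theta^{i}_{kL}$ itself is an isomorphism: since $k$ is noetherian, this is exactly the content of Remark~\ref{disc100602b}. Applying the exact functor $\md{(-)}$ then shows that $\md{(\Theta^{i}_{kL})}$ is an isomorphism, giving the right vertical arrow.

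Next I would check that the biduality map $\bidual{\Ext{i}{k}{L}}$ is an isomorphism. The module $\Ext{i}{k}{L}$ is annihilated by $\m$, so it is a $k$-vector space, and the assumption $\mu^i_R(L)=\len_R(\Ext{i}{k}{L})<\infty$ says precisely that this vector space is finite-dimensional, hence has finite length over $R$. By Lemma~\ref{lem100213b} such a module is mini-max, and since it is annihilated by $\m$ the ring $R/\ann_R(\Ext{i}{k}{L})$ is artinian, in particular complete; Fact~\ref{para1} then guarantees that $\Ext{i}{k}{L}$ is Matlis reflexive, i.e., $\bidual{\Ext{i}{k}{L}}$ is an isomorphism.

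Finally, commutativity of the diagram forces $\Ext{i}{k}{\bidual L}$ to be an isomorphism, completing the proof. There is no real obstacle here beyond invoking the correct diagram and recognizing that the hypothesis $\mu^i_R(L)<\infty$ is exactly what makes $\Ext{i}{k}{L}$ a finite-length (hence Matlis reflexive) module.
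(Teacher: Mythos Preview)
Your proof is correct and follows essentially the same route as the paper: invoke the commutative square of Fact~\ref{fact100604a} with $L'=k$, observe that $\Theta^{i}_{kL}$ is an isomorphism because $k$ is noetherian (Remark~\ref{disc100602b}), and note that $\Ext{i}{k}{L}$ is Matlis reflexive because it is a finite-dimensional $k$-vector space. The paper states the Matlis reflexivity of a finite-dimensional $k$-vector space directly rather than routing through Lemma~\ref{lem100213b} and Fact~\ref{para1}, but the argument is otherwise identical.
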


\begin{proof}
The assumption $\mu^i_R(L)<\infty$ says that
$\Ext{i}{k}{L}$ is a finite dimensional $k$-vector space, 
so it is Matlis reflexive over $R$; that is, the map
$$\bidual{\Ext{i}{k}{L}}\colon \Ext{i}{k}{L}\to \mdd{\Ext{i}{k}{L}}$$
is an isomorphism.
Since $k$ is finitely generated, Remark~\ref{disc100602b} implies that
$$\Theta^{i}_{kL} \colon\Tor{i}{k}{\md L}\to\md{\Ext{i}{k}{L}}$$
is an isomorphism. Hence $\md{(\Theta_{kL}^i)}$ is also an isomorphism.
Using  Fact~\ref{fact100604a} with $L'=k$,
we conclude that $\Ext{i}{k}{\bidual L}$ is an isomorphism, as desired.
\end{proof}

\begin{lem}\label{lem100604b}
Let $A$ and $L$ be $R$-modules such that $A$ is artinian.  
Fix an index $i\geq 0$ such that 
$\mu_R^{i-1}(L)$, $\mu_R^i(L)$ and $\mu_R^{i+1}(L)$ are finite.
Then the map
$$\Ext{i}{A}{\bidual L}\colon \Ext{i}{A}{L}\to\Ext{i}{A}{\mdd L}$$
is an isomorphism.
\end{lem}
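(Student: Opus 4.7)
The plan is to realize $\Ext{i}{A}{\bidual{L}}$ as the middle arrow of the long exact sequence produced by a short exact sequence of the form $0\to L\to\mdd{L}\to Q\to 0$, and then to force the flanking $\operatorname{Ext}$ groups of $Q$ to vanish. The setup rests on the fact that $\bidual{L}$ is injective for every $R$-module $L$ over the local ring $R$: given $0\neq x\in L$, one has $\ann_R(x)\subseteq\m$, so $(0:_E\ann_R(x))\supseteq\soc(E)\neq 0$; picking a nonzero element there yields a map $Rx\cong R/\ann_R(x)\to E$ not killing $x$, which by injectivity of $E$ extends to some $\psi\in\md{L}$ with $\psi(x)\neq 0$. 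Hence $Q:=\coker(\bidual{L})$ fits into a short exact sequence $0\to L\xra{\bidual{L}}\mdd{L}\to Q\to 0$.

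Next I would pin down the Bass numbers of $Q$. Lemma~\ref{lem100604a} combined with the finiteness hypothesis gives that $\Ext{j}{k}{\bidual{L}}$ is an isomorphism for $j\in\{i-1,i,i+1\}$. Applying $\Hom{k}{-}$ to the short exact sequence, a diagram chase in the resulting long exact sequence (the three consecutive isomorphisms make both the maps into $\Ext{i-1}{k}{Q}$ and $\Ext{i}{k}{Q}$ and the maps out of them zero) yields $\mu^{i-1}_R(Q)=\mu^i_R(Q)=0$.

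To upgrade this to $\Ext{j}{A}{Q}=0$ for $j\in\{i-1,i\}$, I would pick a minimal injective resolution $J$ of $Q$. By Remark~\ref{disc100414a}(a), the vanishing $\mu^j_R(Q)=0$ says $J^j$ has no summand isomorphic to $E$, so $\Gamma_{\m}(J^j)=0$. Since $A$ is $\m$-torsion by Fact~\ref{para0z}(a), Lemma~\ref{lem100206c1}(a) gives $\Hom{A}{J^j}\cong\Hom{A}{\Gamma_{\m}(J^j)}=0$ for $j=i-1,i$, so $\Ext{i-1}{A}{Q}=\Ext{i}{A}{Q}=0$ as cohomology of a complex that vanishes in those degrees. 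Finally, the long exact sequence arising from applying $\Hom{A}{-}$ to the short exact sequence contains
$$\Ext{i-1}{A}{Q}\to\Ext{i}{A}{L}\xra{\Ext{i}{A}{\bidual{L}}}\Ext{i}{A}{\mdd{L}}\to\Ext{i}{A}{Q},$$
with both ends zero, forcing $\Ext{i}{A}{\bidual{L}}$ to be an isomorphism. The genuinely delicate point is the diagram chase in the second paragraph: the need to simultaneously annihilate two consecutive Bass numbers of $Q$ is precisely what dictates the three-consecutive-Bass-number hypothesis on $L$.
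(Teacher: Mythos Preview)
Your proof is correct and follows essentially the same route as the paper's: form the cokernel $Q=\coker(\bidual L)$, use Lemma~\ref{lem100604a} and the long exact sequence in $\Ext{}{k}{-}$ to force $\mu^{i-1}_R(Q)=\mu^i_R(Q)=0$, then use the minimal injective resolution of $Q$ together with Lemma~\ref{lem100206c1}\eqref{lem100206c1a} to conclude $\Ext{i-1}{A}{Q}=\Ext{i}{A}{Q}=0$, and finish with the long exact sequence in $\Ext{}{A}{-}$. Your explicit justification of the injectivity of $\bidual L$ and the invocation of Fact~\ref{para0z}\eqref{para0z1} for $A$ being $\m$-torsion are extra detail, but the argument is otherwise identical to the paper's.
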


\begin{proof}
Lemma~\ref{lem100604a} implies that for $t=i-1,i,i+1$ the maps
$$\Ext{t}{k}{\bidual L}\colon \Ext{t}{k}{L}\to\Ext{t}{k}{\mdd L}$$
are isomorphisms.
As the biduality map $\bidual L$ is injective,  we have
an exact sequence
\begin{equation} 
\label{lem100604b1}
0\to L\xra{\bidual L}\mdd{L}\to\coker{\bidual L}\to 0.
\end{equation}
Using the long exact sequence associated to $\Ext{}{k}{-}$,
we conclude that for $t=i-1,i$ we have
$\Ext{t}{k}{\coker{\bidual L}}=0$.
In other words, we have $\mu^{t}_R(\coker{\bidual L})=0$.

Let $J$ be a minimal injective resolution of $\coker{\bidual L}$. The previous paragraph
shows that for $t=i-1,i$ the module $J^{t}$ does not have $E$ as a summand by Remark~\ref{disc100414a}\eqref{disc100414a2}. That is,
we have $\Gamma_{\m}(J^{t})=0$, so
Lemma~\ref{lem100206c1}\eqref{lem100206c1a} implies that
$$\Hom{A}{J^t}\cong\Hom{A}{\Gamma_{\m}(J^t)}=0.$$
It follows that
$\Ext{t}{A}{\coker(\bidual L)}=0$ for $t=i-1,i$. 
From the long exact sequence associated to $\Ext{}{A}{-}$
with respect to~\eqref{lem100604b1}, it follows that $\Ext{i}{A}{\bidual L}$
is an isomorphism, as desired.
\end{proof}

We are now ready to tackle the main results of this section.

\begin{thm}\label{prop100601b}
Let $A$ and $L$ be $R$-modules such that $A$ is artinian.  
Fix an index $i\geq 0$ such that 
$\mu_R^{i-1}(L)$, $\mu_R^i(L)$ and $\mu_R^{i+1}(L)$ are finite.
\begin{enumerate}[\rm(a)]
\item\label{prop100601b2}
There is an $R$-module isomorphism 
$\Ext{i}{A}{L}^v\cong\Tor{i}{A}{\md L}$
where $(-)^v=\Hom[\comp R]{-}{E}$.
\item\label{prop100601b1}
If $R/(\ann_R(A)+\ann_R(L))$ is complete, then $\Theta^{i}_{AL}$
provides an isomorphism $\Tor{i}{A}{\md{L}}\cong\md{\Ext{i}{A}{L}}$.
\end{enumerate}
\end{thm}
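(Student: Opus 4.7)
The key tool for both parts is the commutative diagram of Fact~\ref{fact100604a} with $L'=A$, whose left vertical arrow $\Ext{i}{A}{\bidual L}$ is an isomorphism by Lemma~\ref{lem100604b}, courtesy of the three Bass-number finiteness hypotheses on $L$.

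I would prove~(b) first. Since $\ann_R(A)+\ann_R(L)\subseteq\ann_R(\Ext{i}{A}{L})$, the completeness hypothesis propagates to $R/\ann_R(\Ext{i}{A}{L})$, and Corollary~\ref{prop100316a} then asserts that $\Ext{i}{A}{L}$ is Matlis reflexive over $R$. Hence the top horizontal arrow $\bidual{\Ext{i}{A}{L}}$ is an isomorphism, and commutativity of the diagram forces $\md{(\Theta^{i}_{AL})}$ to be an isomorphism. To upgrade this to $\Theta^{i}_{AL}$ itself being an isomorphism, I verify Matlis reflexivity of its source and target: the target $\md{\Ext{i}{A}{L}}$ is the Matlis dual of a Matlis reflexive module, hence reflexive. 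For the source $\Tor{i}{A}{\md L}$, Corollary~\ref{thm100424a} applies because $\beta^R_i(\md L)=\mu^i_R(L)<\infty$ by Lemma~\ref{lem100213c}\eqref{lem100213c5}, and because $\ann_R(\md L)=\ann_R(L)$ by the injectivity of $\bidual L$ (combined with the trivial inclusion $\ann_R(L)\subseteq\ann_R(\md L)$). A standard biduality argument---applying $\md{(-)}$ again and using naturality of $\bidual{(-)}$ on the Matlis reflexive source and target---then upgrades $\md{(\Theta^{i}_{AL})}$ being an isomorphism to $\Theta^{i}_{AL}$ being an isomorphism.

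For~(a), I reduce to $\comp R$ and invoke~(b) there. Set $L''=\Otimes{\comp R}{L}$. Lemma~\ref{lem100317b} gives $\Ext{i}{A}{L}\cong\Ext[\comp R]{i}{A}{L''}$ as $\comp R$-modules, and the same lemma with $k$ in place of $A$ yields $\mu^{t}_{\comp R}(L'')=\mu^{t}_R(L)$ for $t=i-1,i,i+1$. As $\comp R/(\ann_{\comp R}(A)+\ann_{\comp R}(L''))$ is a quotient of the complete ring $\comp R$, part~(b) over $\comp R$ applies and produces $\Ext[\comp R]{i}{A}{L''}^{v}\cong\Tor[\comp R]{i}{A}{(L'')^{v}}$. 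Hom-tensor adjointness identifies $(L'')^{v}=\Hom[\comp R]{\Otimes{\comp R}{L}}{E}$ with $\md L$, and a direct comparison of flat resolutions of $A$ over $R$ and over $\comp R$ (using that $\comp R$ is flat over $R$ and $\Otimes{\comp R}{A}\cong A$) gives $\Tor[\comp R]{i}{A}{\md L}\cong\Tor{i}{A}{\md L}$. Concatenating these isomorphisms yields~(a).

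The main obstacle is bookkeeping around $R$- versus $\comp R$-module structures: although $\Ext{i}{A}{L}$ is a noetherian $\comp R$-module by Theorem~\ref{thm100308a}, it need not be $\m$-torsion, so the functors $\md{(-)}$ and $(-)^v$ do not coincide on it. This is precisely why~(a) and~(b) must be proved separately, and why the reduction in~(a) must be routed through Lemma~\ref{lem100317b} and Hom-tensor adjointness rather than through any naive identification of duals.
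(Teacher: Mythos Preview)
Your proof is correct and follows essentially the same route as the paper's: part~(b) via the commutative square of Fact~\ref{fact100604a} together with Lemma~\ref{lem100604b} and Corollary~\ref{prop100316a}, then part~(a) by base-changing to $\comp R$ via Lemma~\ref{lem100317b} and invoking~(b) there. One simplification in part~(b): once $\md{(\Theta^{i}_{AL})}$ is an isomorphism you are already done, since $E$ is an injective cogenerator and hence $\md{(-)}$ reflects isomorphisms (dualize the kernel and cokernel); your extra work establishing Matlis reflexivity of the source and target of $\Theta^{i}_{AL}$ is correct but unnecessary.
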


\begin{proof}
\eqref{prop100601b1}  
Corollary~\ref{prop100316a} and Lemma~\ref{lem100604b} show that the  maps
\begin{align*}
\bidual{\Ext{i}{A}{L}}
&\colon\Ext{i}{A}{L}\to\mdd{\Ext{i}{A}{L}}
\\
\Ext{i}{A}{\bidual L}
&\colon \Ext{i}{A}{L}\to\Ext{i}{A}{\mdd L}
\end{align*}
are isomorphisms.
Fact~\ref{fact100604a} implies that
$\md{(\Theta^{i}_{AL})}$ is an isomorphism,
so we conclude that $\Theta^{i}_{AL}$ is also an isomorphism.

\eqref{prop100601b2}  
Lemma~\ref{lem100317b} explains the first step in the next sequence:
\begin{align*}
\Ext{i}{A}{L}^v
&\cong
\Ext[\comp R]{i}{A}{\Otimes{\comp R}{L}}^v\\
&\cong
\Tor[\comp R]{i}{A}{(\Otimes{\comp R}{L})^v}\\
&\cong
\Tor{i}{A}{(\Otimes{\comp R}{L})^v}\\
&\cong
\Tor{i}{A}{\md L}.
\end{align*}
The second step is  from part~\eqref{prop100601b1}, 
as $\comp R$ is complete and $\mu_{\comp R}^t(\Otimes{\comp R}{L})=\mu_R^t(L)<\infty$
for $t=i-1,i,i+1$. 
The fourth step is from
Hom-tensor adjointness.
For the third step,
let $P$ be a projective resolution of $A$ over $R$.
Since $\comp R$ is flat over $R$, the complex $\Otimes{\comp R}{P}$ is a 
projective resolution of $\Otimes{\comp R}{A}\cong A$ over $\comp R$;
see Lemma~\ref{para0'''}\eqref{para0e}. Thus, the third step follows from the isomorphism
$\Otimes[\comp R]{(\Otimes{\comp R}{P})}{(\Otimes{\comp R}{L})^v}\cong\Otimes{P}{(\Otimes{\comp R}{L})^v}$.
\end{proof}

\begin{question}
Do the conclusions of Lemma~\ref{lem100604b} and Theorem~\ref{prop100601b} 
hold when one only assumes
that $\mu^i_R(L)$ is finite?
\end{question}

\begin{cor}\label{cor100602a}
Let $A$ and $M$ be $R$-modules such that $A$ is artinian and $M$ is mini-max.  
For each index $i\geq 0$, one has
$\Ext{i}{A}{M}^v\cong\Tor{i}{A}{\md M}$,
where $(-)^v=\Hom[\comp R]{-}{E}$.
\end{cor}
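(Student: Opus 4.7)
The plan is to reduce directly to Theorem~\ref{prop100601b}\eqref{prop100601b2}, whose hypothesis requires finiteness of three consecutive Bass numbers of the second argument. Since $M$ is assumed to be mini-max, the finiteness of all Bass numbers $\mu_R^j(M)$ is automatic, and the desired isomorphism then follows with essentially no additional work.

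More precisely, I would proceed as follows. First, I would invoke Lemma~\ref{lem100206a} to obtain $\mu_R^j(M)<\infty$ for every $j\geq 0$; in particular, $\mu_R^{i-1}(M)$, $\mu_R^i(M)$, and $\mu_R^{i+1}(M)$ are all finite (for $i=0$ one only needs $\mu_R^0(M)$ and $\mu_R^1(M)$, and the argument of Theorem~\ref{prop100601b}\eqref{prop100601b2} still applies since $\Ext{-1}{A}{L}=0$). Next, with $L=M$, I would apply Theorem~\ref{prop100601b}\eqref{prop100601b2} directly to conclude that $\Ext{i}{A}{M}^v\cong\Tor{i}{A}{\md M}$, where $(-)^v=\Hom[\comp R]{-}{E}$.

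There is essentially no obstacle here: the heavy lifting has been done in Lemmas~\ref{lem100604a} and~\ref{lem100604b} and in Theorem~\ref{prop100601b}, which handle the interaction between Hom-evaluation, biduality, and the artinian hypothesis on $A$, together with the passage from $R$ to $\comp R$ via Lemma~\ref{lem100317b} and Hom-tensor adjointness. The only ingredient specific to the mini-max hypothesis is the finiteness of Bass numbers supplied by Lemma~\ref{lem100206a}. Thus the corollary is a clean specialization of Theorem~\ref{prop100601b}\eqref{prop100601b2} obtained by replacing the three-Bass-number hypothesis with the stronger (but here automatic) mini-max hypothesis on $M$.
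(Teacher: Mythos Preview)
Your proposal is correct and follows exactly the paper's own proof, which reads simply ``Apply Theorem~\ref{prop100601b}\eqref{prop100601b2} and Lemma~\ref{lem100206a}.'' You have merely unpacked that one-line argument in more detail.
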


\begin{proof}
Apply Theorem~\ref{prop100601b}\eqref{prop100601b2} and Lemma~\ref{lem100206a}.
\end{proof}

\begin{thm}\label{prop100601a}
Let $M$ and $M'$ be mini-max $R$-modules, and fix an index $i\geq 0$.  
If $R/(\ann_R{(M)}+\ann_R{(M')})$ is complete,
then 
$\Theta^{i}_{MM'}$
is an isomorphism, so 
$$\Ext{i}{M}{M'}^v=\md{\Ext{i}{M}{M'}}\cong\Tor{i}{M}{M'^\vee}$$
where $(-)^v=\Hom[\comp R]{-}{E}$.
\end{thm}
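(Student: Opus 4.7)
The plan is to reduce the mini-max case in the first variable to the artinian and noetherian cases via a five-lemma argument, patterned on the strategy used in Theorems~\ref{prop100421b} and~\ref{prop100424b}. First, fix a noetherian submodule $N \subseteq M$ with $A := M/N$ artinian, giving the short exact sequence $0 \to N \to M \to A \to 0$. Since $\ann_R(M) \subseteq \ann_R(N) \cap \ann_R(A)$, the hypothesis that $R/(\ann_R(M)+\ann_R(M'))$ is complete passes to $R/(\ann_R(N)+\ann_R(M'))$ and $R/(\ann_R(A)+\ann_R(M'))$ (both are quotients of a complete ring).

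Next, apply $\Ext{i}{-}{M'}$ to this sequence and dualize the resulting long exact sequence via $\md{(-)}$ (which is exact since $E$ is injective); in parallel, apply $\Tor{i}{-}{\md{M'}}$ to obtain a second long exact sequence. Naturality of $\Theta^{i}_{-,M'}$ in the first variable produces a commutative ladder linking the two sequences. At the noetherian positions $\Theta^{j}_{N,M'}$ is an isomorphism by Remark~\ref{disc100602b}, for every $j$. At the artinian positions Theorem~\ref{prop100601b}\eqref{prop100601b1} supplies the isomorphism $\Theta^{j}_{A,M'}$ for the three relevant values of $j$, since $M'$ is mini-max (so all its Bass numbers are finite by Lemma~\ref{lem100206a}) and the completeness of $R/(\ann_R(A)+\ann_R(M'))$ was established above. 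The five lemma then forces $\Theta^{i}_{M,M'} \colon \Tor{i}{M}{\md{M'}} \to \md{\Ext{i}{M}{M'}}$ to be an isomorphism.

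To obtain the stated form of the conclusion, I still need the equality $\Ext{i}{M}{M'}^v = \md{\Ext{i}{M}{M'}}$. Since the ring $R/\ann_R(\Ext{i}{M}{M'})$ is a quotient of the complete ring $R/(\ann_R(M)+\ann_R(M'))$, it is itself complete, so $\Ext{i}{M}{M'}$ carries a canonical $\comp R$-module structure. Because the image of any $x$ under an $R$-linear map to $E$ is annihilated by $\ann_R(\Ext{i}{M}{M'})$, one verifies (using commutativity of $\comp R$ and arguing as in Lemma~\ref{lem100206c1}\eqref{lem100206c1b}) that any such map is automatically $\comp R$-linear, giving the desired identification of the two Matlis duals.

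The main obstacle I anticipate is the annihilator bookkeeping needed to guarantee that Theorem~\ref{prop100601b}\eqref{prop100601b1} really does apply to the artinian piece $A$; once the completeness of $R/(\ann_R(A)+\ann_R(M'))$ is secured, both the five-lemma ladder and the identification $(-)^v = \md{(-)}$ on $\Ext{i}{M}{M'}$ are routine.
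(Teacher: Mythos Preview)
Your proposal is correct and follows essentially the same five-lemma strategy as the paper: split $M$ via $0\to N\to M\to A\to 0$, compare the long exact sequences in $\Tor{}{-}{\md{M'}}$ and $\md{\Ext{}{-}{M'}}$ via $\Theta$, and conclude. The annihilator bookkeeping you flag is exactly right and is handled the same way implicitly in the paper.

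There is one genuine difference worth noting. For the noetherian positions $\Theta^{j}_{N,M'}$, you appeal directly to Remark~\ref{disc100602b}, which already records that $\Theta^{i}_{NL'}$ is an isomorphism whenever the first argument is noetherian (this is the standard Hom-evaluation isomorphism for finitely presented modules). The paper instead proves a separate ``Case~1'' by passing to $\comp R$, invoking a result of Belshoff for Matlis reflexive modules over a complete ring, and then checking compatibility with $\Theta$. Your route is shorter and avoids the external citation; the paper's route has the virtue of making the isomorphism over $\comp R$ explicit, but for the purpose of the five-lemma argument your direct appeal to Remark~\ref{disc100602b} is entirely sufficient. The artinian positions and the identification $\Ext{i}{M}{M'}^v=\md{\Ext{i}{M}{M'}}$ are handled the same way in both arguments (the latter via Lemma~\ref{lem100206c1}\eqref{lem100206c1b} together with Fact~\ref{para0z}\eqref{para0z4}, since $R/\ann_R(\Ext{i}{M}{M'})$ is complete). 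One small inaccuracy: only two values of $j$ (namely $i$ and $i+1$) are needed at the artinian positions, not three.
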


\begin{proof}
Theorem~\ref{prop100421b}\eqref{prop100421b2} implies that
$\Ext{i}{M}{M'}$ is Matlis reflexive over $R$, so Lemma~\ref{lem100206c1}\eqref{lem100206c1b}
and Fact~\ref{para1} 
imply that $\Ext{i}{M}{M'}^v=\md{\Ext{i}{M}{M'}}$. Thus, it remains to show that
$\Theta^{i}_{MM'}$ is an isomorphism.

Case 1: $M$ is noetherian.  In the next sequence, the first and last steps are from Hom-tensor adjointness.
The second step is standard since $M$ is noetherian:
\begin{align*}
\Ext{i}{M}{M'}^\vee
&\cong
(\Otimes{\comp R}{\Ext{i}{M}{M'}})^v\\
&\cong
\Ext[\comp R]{i}{\Otimes{\comp R}{M}}{\Otimes{\comp R}{M'}}^v\\
&\cong
\Tor[\comp R]{i}{\Otimes{\comp R}{M}}{(\Otimes{\comp R}{M'})^v}\\
&\cong
\Tor{i}{M}{(\Otimes{\comp R}{M'})^v}\\
&\cong
\Tor{i}{M}{M'^\vee}.
\end{align*}
Since $M$ and $M'$ are mini-max over $R$, 
the modules $\Otimes{\comp R}{M}$ and $\Otimes {\comp R}{M'}$ 
are Matlis reflexive over $\comp R$;
see Fact~\ref{para1}.
Thus~\cite[Theorem 4(c)]{belshoff:mrm} explains the 
third step.  The fourth step is from the fact that $\comp R$ is flat over $R$.
Since these isomorphisms are compatible with $\Theta^{i}_{MM'}$, it follows that
$\Theta^{i}_{MM'}$ is an isomorphism.

Case 2: the general case. Since $M$ is mini-max over $R$,
there is an exact sequence of $R$-modules homomorphisms $0\to N\to M\to A\to 0$ 
such that $N$ is noetherian and $A$ is artinian.  The long 
exact sequences associated to $\Tor{}{-}{\md{M'}}$ and $\Ext{}{-}{M'}$ 
fit into the following commutative diagram:
$$\xymatrix{
\cdots\ar[r]
&\Tor{i}{N}{M'^\vee}\ar[r]\ar[d]^{\Theta^{i}_{NM'}}
&\Tor{i}{M}{M'^\vee}\ar[r]\ar[d]^{\Theta^{i}_{MM'}}
&\Tor{i}{A}{M'^\vee}\ar[d]^{\Theta^{i}_{AM'}}\ar[r]
&\cdots\\
\cdots\ar[r]
&\Ext{i}{N}{M'}^\vee\ar[r]
&\Ext{i}{M}{M'}^\vee\ar[r]
&\Ext{i}{A}{M'}^\vee\ar[r]
&\cdots.
}$$
Case 1 shows that $\Theta^{i}_{NM'}$ and $\Theta^{i-1}_{NM'}$ are  isomorphisms.
Theorem~\ref{prop100601b}\eqref{prop100601b1} implies that 
$\Theta^{i}_{AM'}$ and $\Theta^{i+1}_{AM'}$ are isomorphisms.  
Hence, the Five Lemma shows that $\Theta^{i}_{MM'}$ is an isomorphism.
\end{proof}

The next result contains Theorem~\ref{intthm100930b} from the introduction.
A special case of it  can be found in \cite[Theorem 3]{belshoff:scrtmrm}.

\begin{cor}\label{cor100618c}
Let $M$ and $M'$ be mini-max $R$-modules, and fix an index $i\geq 0$.  
If either $M$ or $M'$ is Matlis reflexive,
then 
$\Theta^{i}_{MM'}$
is an isomorphism, so one has
$\Ext{i}{M}{M'}^v=\md{\Ext{i}{M}{M'}}\cong\Tor{i}{M}{M'^\vee}$,
where $(-)^v=\Hom[\comp R]{-}{E}$.
\end{cor}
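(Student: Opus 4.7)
The plan is to reduce Corollary~\ref{cor100618c} directly to Theorem~\ref{prop100601a}. The hypothesis in Theorem~\ref{prop100601a} is that the ring $R/(\ann_R(M)+\ann_R(M'))$ be complete, so the entire task is to verify that this holds whenever one of the mini-max modules $M$, $M'$ is Matlis reflexive.

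First I would invoke Fact~\ref{para1}: if $M$ (resp.\ $M'$) is Matlis reflexive, then $R/\ann_R(M)$ (resp.\ $R/\ann_R(M')$) is complete, since the Matlis reflexive property is equivalent to being mini-max together with completeness of $R/\ann_R(-)$. By symmetry we may assume $M$ is Matlis reflexive, so $R/\ann_R(M)$ is complete.

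Next I would observe that $\ann_R(M)\subseteq\ann_R(M)+\ann_R(M')$, so there is a surjection $R/\ann_R(M)\onto R/(\ann_R(M)+\ann_R(M'))$. A quotient of a complete local ring is complete, so $R/(\ann_R(M)+\ann_R(M'))$ is complete, and the hypothesis of Theorem~\ref{prop100601a} is satisfied.

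Applying Theorem~\ref{prop100601a} then yields that $\Theta^{i}_{MM'}$ is an isomorphism and produces the displayed chain
\[
\Ext{i}{M}{M'}^v=\md{\Ext{i}{M}{M'}}\cong\Tor{i}{M}{M'^\vee},
\]
completing the proof. I do not anticipate any obstacles beyond the trivial verification that completeness descends to quotients; the real content has already been packaged inside Theorem~\ref{prop100601a} and Fact~\ref{para1}.
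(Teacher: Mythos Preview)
Your proposal is correct and matches the paper's own proof, which simply says ``Apply Theorem~\ref{prop100601a} and Fact~\ref{para1}.'' You have spelled out exactly the implicit step the paper omits, namely that completeness of $R/\ann_R(M)$ (or $R/\ann_R(M')$) passes to the quotient $R/(\ann_R(M)+\ann_R(M'))$.
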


\begin{proof}
Apply Theorem~\ref{prop100601a}
and Fact~\ref{para1}.
\end{proof}

The next example shows that the modules $\md{\Ext{i}{L}{L'}}$ 
and $\Tor{i}{L}{\md{L'}}$ are not isomorphic in general.

\begin{ex}\label{ex100601a}
Assume that $R$ is not complete.  We have $\ann_R(E)=0$, 
so the ring $R/\ann_R(E) \cong R$ is not complete, by assumption. 
Thus, Fact~\ref{para1} implies that $E$ is not Matlis reflexive,
that is, the biduality map
$\delta_E\colon E\into\mdd{E}$ is not an isomorphism.
Since $\mdd E$ is injective, we have $\mdd{E}\cong E\oplus J$
for some non-zero injective $R$-module $J$. The uniqueness of 
direct sum decompositions of injective $R$-modules implies that
$\mdd E\not\cong E$. This 
provides the second step below:
$$\md{\Hom{E}{E}}\cong\mdd{E}\not\cong E\cong\Otimes{E}{\comp R}\cong\Otimes{E}{\md{E}}.$$
The third step is from Lemma~\ref{para0'''}\eqref{para0e},
and the remaining steps are standard.
\end{ex}

\section{Vanishing of Ext and Tor} \label{sec100317b}

In this section we describe the sets of associated primes of $\Hom{A}{M}$ and attached primes of
$\Otimes AM$ over $\comp R$. The section concludes with 
some results on the related
topic of vanishing for $\Ext iAM$ and $\Tor iAM$.

\subsection*{Associated and Attached Primes}

\

\vspace{1mm}

\noindent
The following is dual to the notion of associated primes of  noetherian modules;
see, e.g., \cite{macdonald:srmcr} or \cite[Appendix to \S 6]{matsumura:crt} or~\cite{ooishi:mdwm}.

\begin{defn} \label{defn100215a}
Let $A$ be an artinian $R$-module. 
A prime ideal $\p\in\spec(R)$ is \emph{attached} to $A$ if there is a submodule
$A'\subseteq A$ such that $\p=\ann_R(A/A')$. We let $\att_R(A)$ denote the set
of prime ideals attached to $A$.
\end{defn}

\begin{lem} \label{lem100215a}
Let $A$ be an artinian $R$-module such that
$R/\ann_R(A)$ is   complete, and let $N$ be a noetherian $R$-module.
There are equalities
\begin{align*}
\supp_R(\md A)&=\cup_{\p\in\ass_R(\md A)}V(\p)=\cup_{\p\in\att_R(A)}V(\p)
\\
\att_R(\md N)&=\ass_R(N)
\\
\att_R(A)&=\ass_R(\md A). 
\end{align*}
\end{lem}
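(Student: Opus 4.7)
The plan is to establish the three equalities in reverse order, starting with line two, then deriving line three via Matlis reflexivity, and finally obtaining line one as a formal consequence. Before beginning, I record three preliminary facts from the hypotheses: since $A$ is $\m$-torsion (Fact~\ref{para0z}\eqref{para0z1}) and $R/\ann_R(A)$ is complete, Lemma~\ref{lem100423f}\eqref{lem100423f2} makes $\md A$ noetherian over $R$; Fact~\ref{para1} makes $A$ Matlis reflexive over $R$ so that $A\cong\mdd A$; and Lemma~\ref{lem100423f}\eqref{lem100423f1} makes $\md N$ artinian over $R$.

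For line two $\att_R(\md N)=\ass_R(N)$, I would prove the two inclusions separately. For $\ass_R(N)\subseteq\att_R(\md N)$: given an embedding $R/\p\hookrightarrow N$, dualization produces a surjection $\md N\twoheadrightarrow\md{R/\p}\cong(0:_E\p)$, and the latter module carries the structure of the injective hull of the residue field over the local ring $R/\p$, hence is a faithful $R/\p$-module with $\ann_R(\md{R/\p})=\p$, exhibiting $\p$ as attached to $\md N$. For the reverse inclusion $\att_R(\md N)\subseteq\ass_R(N)$, I would pass to the completion: combining Lemma~\ref{lem100206c1}\eqref{lem100206c1b} with Hom-tensor adjointness and the Matlis reflexivity of the noetherian $\comp R$-module $\Otimes{\comp R}{N}$ (which holds over the complete ring $\comp R$) yields an isomorphism $\mdd N\cong\Otimes{\comp R}{N}$ of $\comp R$-modules. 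The classical duality over the complete local ring $\comp R$ then identifies $\att_{\comp R}(\md N)=\ass_{\comp R}(\Otimes{\comp R}{N})$, and pairing this with the standard contraction formula $\att_R(\md N)=\{\q\cap R:\q\in\att_{\comp R}(\md N)\}$ and the faithful-flatness identity $\ass_R(\Otimes{\comp R}{N})=\ass_R(N)$ gives the desired inclusion.

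Line three $\att_R(A)=\ass_R(\md A)$ then follows by applying line two with $N$ replaced by the noetherian module $\md A$, producing $\att_R(\mdd A)=\ass_R(\md A)$; the Matlis reflexivity $A\cong\mdd A$ completes the identification. For line one, the first equality $\supp_R(\md A)=\cup_{\p\in\ass_R(\md A)}V(\p)$ is the standard support-associated-primes formula applied to the noetherian module $\md A$, and the remaining equality $\cup_{\p\in\ass_R(\md A)}V(\p)=\cup_{\p\in\att_R(A)}V(\p)$ is immediate from line three.

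The hard part will be the reverse inclusion in line two: the forward direction is a routine Matlis-duality argument, but the reverse genuinely requires descending attached and associated primes along the faithfully flat extension $R\to\comp R$, for which one must invoke both the complete-local-ring version of the duality between attached primes of an artinian module and associated primes of its Matlis dual, together with the standard contraction property for attached primes of $\m$-torsion modules.
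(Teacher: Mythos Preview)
Your overall architecture matches the paper's: establish $\att_R(\md N)=\ass_R(N)$ first, deduce $\att_R(A)=\ass_R(\md A)$ from it via Matlis reflexivity of $A$ (using that $\md A$ is noetherian), and then read off the first line from the standard support formula for the noetherian module $\md A$ together with the third line. The paper does exactly this.

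The only substantive difference lies in how the second line $\att_R(\md N)=\ass_R(N)$ is obtained. The paper simply cites Sharp's theorem \cite[(2.3) Theorem]{sharp:srvlcm} and is done in one line. You instead supply an argument: the inclusion $\ass_R(N)\subseteq\att_R(\md N)$ by dualizing an embedding $R/\p\hookrightarrow N$, and the reverse inclusion by passing to $\comp R$, identifying $\mdd N\cong\Otimes{\comp R}{N}$, invoking the classical duality $\att_{\comp R}(\md N)=\ass_{\comp R}(\Otimes{\comp R}{N})$ over the complete ring, and then descending via the contraction formulas for attached and associated primes along $R\to\comp R$. This is correct and more self-contained in spirit, but note that your ``hard part'' still rests on two facts external to the paper (the complete-local case of the same duality, and the contraction identity $\att_R(\md N)=\{\q\cap R:\q\in\att_{\comp R}(\md N)\}$), so in the end you are trading one citation (Sharp) for two others. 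The paper's route is shorter; yours makes the reduction-to-completion mechanism explicit, which is instructive but not needed here.
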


\begin{proof}
The $R$-module $\md A$ is noetherian by Lemma~\ref{lem100423f}\eqref{lem100423f2}, 
so the first equality is standard,
and the second equality follows from the fourth one.
The third equality is from~\cite[(2.3) Theorem]{sharp:srvlcm}.
This also explains the second step in the next sequence
$$\att_R(A)=\att_R(\mdd A)=\ass_R(\md A)$$
since
$\md A$ is noetherian.
The first step in this sequence follows from the fact that $A$ is Matlis reflexive;
see Fact~\ref{para1}.
\end{proof}

The next proposition can also be deduced from a result of
Melkersson and Schenzel~\cite[Proposition~5.2]{melkersson:cam}.

\begin{prop} \label{cor100319a}
Let $A$ and $L$ be $R$-modules such that $\mu^0_R(L)<\infty$ and $A$ is artinian. Then \label{thm100308b}
\begin{gather*}
\ass_{\comp R}(\Hom A{L})
=\ass_{\comp{R}}(\md A)\cap\supp_{\comp{R}}(\md{\Gamma_{\m}(L)})
=\att_{\comp{R}}(A)\cap\supp_{\comp{R}}(\md{\Gamma_{\m}(L)}).
\end{gather*}
\end{prop}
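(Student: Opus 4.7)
The plan is to rewrite $\Hom{A}{L}$ as a Hom between two noetherian $\comp R$-modules, so that the classical formula for associated primes applies over $\comp R$. The first observation is that $\Gamma_{\m}(L)$ is artinian: since the inclusion $\Hom{k}{\Gamma_{\m}(L)}\to\Hom{k}{L}$ is an isomorphism, we have $\mu^0_R(\Gamma_{\m}(L))=\mu^0_R(L)<\infty$, and as $\Gamma_{\m}(L)$ is $\m$-torsion by definition, Lemma~\ref{lem100423c} shows it is artinian over both $R$ and $\comp R$. Lemma~\ref{lem100206c1} then yields
\[
\Hom{A}{L}\cong\Hom[\comp R]{A}{\Gamma_{\m}(L)},
\]
so the left-hand side is naturally an $\comp R$-module whose $\ass_{\comp R}$ must be computed.

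Next I would invoke Lemma~\ref{lem100206c2}\eqref{lem100206c2a} with the roles of artinian and $\m$-torsion module swapped, taking the artinian module to be $\Gamma_{\m}(L)$ and the $\m$-torsion one to be $A$. This produces
\[
\Hom{A}{\Gamma_{\m}(L)}\cong\Hom[\comp R]{\md{\Gamma_{\m}(L)}}{\md A},
\]
and by Lemma~\ref{lem100423f}\eqref{lem100423f2}, both $\md{\Gamma_{\m}(L)}$ and $\md A$ are noetherian $\comp R$-modules. Now the standard identity
\[
\ass_{\comp R}\Hom[\comp R]{M'}{N'}=\supp_{\comp R}(M')\cap\ass_{\comp R}(N'),
\]
valid for finitely generated $M'$ and $N'$ over the noetherian ring $\comp R$, applied with $M'=\md{\Gamma_{\m}(L)}$ and $N'=\md A$, yields the first displayed equality.

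For the second equality, I would apply Lemma~\ref{lem100215a} over $\comp R$ to obtain $\ass_{\comp R}(\md A)=\att_{\comp R}(A)$, then intersect both sides with $\supp_{\comp R}(\md{\Gamma_{\m}(L)})$. The hypothesis of that lemma is satisfied because the artinian module $A$ is Matlis reflexive over the complete ring $\comp R$ by Fact~\ref{para1}, which forces $\comp R/\ann_{\comp R}(A)$ to be complete. The main bookkeeping, and the closest thing to an obstacle, is checking that the various $R$- and $\comp R$-structures interchange correctly on the $\m$-torsion modules at play; all of this is encapsulated in Section~\ref{sec1} (especially Lemmas~\ref{lem100206c1} and~\ref{para0'''}), so once that is in hand the argument runs smoothly.
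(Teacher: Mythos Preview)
Your proposal is correct and follows essentially the same route as the paper: reduce to $\Gamma_{\m}(L)$ via Lemma~\ref{lem100206c1}\eqref{lem100206c1a}, apply Lemma~\ref{lem100206c2}\eqref{lem100206c2a} to rewrite the Hom as $\Hom[\comp R]{\md{\Gamma_{\m}(L)}}{\md A}$ between noetherian $\comp R$-modules, invoke the Bourbaki formula for associated primes of such a Hom, and finish with Lemma~\ref{lem100215a}. One minor remark: the fact that $\md A$ and $\md{\Gamma_{\m}(L)}$ are noetherian over $\comp R$ is not quite what Lemma~\ref{lem100423f}\eqref{lem100423f2} says (that lemma concludes noetherianness over $R$), but it follows either by applying that lemma over $\comp R$ or directly from Matlis duality over the complete ring $\comp R$, as in the proof of Lemma~\ref{lem100206c2}\eqref{lem100206c2b}.
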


\begin{proof}
The assumption $\mu^0_R(L)<\infty$ implies that
$\Gamma_{\m}(L)$
is artinian. 
This implies that $\md{\Gamma_{\m}(L)}$ is a noetherian $\comp R$-module,
so a result of Bourbaki~\cite[IV 1.4 Proposition 10]{bourbaki:ac3-4}
provides the third equality in the next sequence; see also~\cite[Exercise~1.2.27]{bruns:cmr}:
\begin{align*}
\ass_{\comp R}(\Hom{A}L)
&=\ass_{\comp R}(\Hom A{\Gamma_{\m}(L)})\\
&=\ass_{\comp R}(\Hom[\comp R]{\md{\Gamma_{\m}(L)}}{\md A})\\
&=\ass_{\comp{R}}(\md A)\cap\supp_{\comp{R}}(\md{\Gamma_{\m}(L)})\\
&=\att_{\comp{R}}(A)\cap\supp_{\comp{R}}(\md{\Gamma_{\m}(L)}).
\end{align*}
The remaining equalities are from Lemmas~\ref{lem100206c1}\eqref{lem100206c1a},
\ref{lem100206c2}\eqref{lem100206c2a}, and~\ref{lem100215a}, respectively.
\end{proof}

\begin{cor} \label{cor100607b}
Let $M$ and $M'$ be mini-max $R$-modules
such that the quotient $R/(\ann_R(M)+\ann_R(M'))$ is complete.
\begin{enumerate}[\rm(a)]
\item \label{cor100607b1}
For each index $i\geq 0$, one has $\Ext{i}{M}{M'}
\cong\Ext{i}{\md{M'}}{\md{M}}$.
\item \label{cor100607b2}
If $M'$ is noetherian, then 
$$\ass_{\comp R}(\Hom{M}{M'})
=\att_{\comp R}(\md{M'})\cap\supp_{\comp R}(\md{\Gamma_{\m}(\md{M})}).$$
\end{enumerate}
\end{cor}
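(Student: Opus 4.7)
For part (a), my plan is to chain together Matlis reflexivity of $\Ext{i}{M}{M'}$ with the Ext--Tor duality from Section~\ref{sec100601a}. Since $R/(\ann_R(M)+\ann_R(M'))$ is complete, Theorem~\ref{prop100421b}(a) applies and gives that $\Ext{i}{M}{M'}$ is Matlis reflexive, so the biduality map is an isomorphism
\[
\Ext{i}{M}{M'}\xra{\cong}\mdd{\Ext{i}{M}{M'}}.
\]
The same hypothesis lets me invoke Theorem~\ref{prop100601a} to identify $\md{\Ext{i}{M}{M'}}\cong\Tor{i}{M}{\md{M'}}$, and applying $\md{(-)}$ once more yields $\mdd{\Ext{i}{M}{M'}}\cong\md{\Tor{i}{M}{\md{M'}}}$. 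Finally, the symmetry of Tor together with the adjointness isomorphism $\md{\Tor{i}{L}{L'}}\cong\Ext{i}{L}{\md{L'}}$ from Remark~\ref{disc100602b}, applied with $L=\md{M'}$ and $L'=M$, gives $\md{\Tor{i}{\md{M'}}{M}}\cong\Ext{i}{\md{M'}}{\md{M}}$. Concatenating these isomorphisms produces the desired identification.

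For part (b), I specialize part (a) at $i=0$ to obtain $\Hom{M}{M'}\cong\Hom{\md{M'}}{\md{M}}$, which transforms the problem into a computation of associated primes of a Hom module whose source is artinian. Then I apply Proposition~\ref{cor100319a} with $A=\md{M'}$ and $L=\md{M}$. Both hypotheses of that proposition must be checked: since $M'$ is noetherian, Lemma~\ref{lem100423f}(a) guarantees that $\md{M'}$ is artinian; and since $M$ is mini-max, Lemma~\ref{lem100206a} gives $\beta^R_0(M)<\infty$, whence Lemma~\ref{lem100213c}(b) yields $\mu^0_R(\md M)=\beta^R_0(M)<\infty$. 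Proposition~\ref{cor100319a} then delivers
\[
\ass_{\comp R}(\Hom{\md{M'}}{\md{M}})=\att_{\comp R}(\md{M'})\cap\supp_{\comp R}(\md{\Gamma_{\m}(\md M)}),
\]
and combining this with the isomorphism from part~(a) completes the argument.

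The main obstacle, such as it is, lies entirely in part~(a): one must be confident that every ingredient in the chain genuinely applies under the single hypothesis that $R/(\ann_R(M)+\ann_R(M'))$ is complete. Both Theorem~\ref{prop100421b}(a) and Theorem~\ref{prop100601a} are stated under exactly this assumption, so no extra work is required beyond tracking the isomorphisms. Part~(b) is then essentially a direct application of Proposition~\ref{cor100319a} once the hypotheses on $\md{M'}$ and $\md{M}$ have been verified.
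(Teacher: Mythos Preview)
Your proof is correct and follows essentially the same route as the paper's own argument: for part~(a) the paper chains Theorem~\ref{prop100421b}\eqref{prop100421b2}, Theorem~\ref{prop100601a}, and Remark~\ref{disc100602b} in exactly the way you describe (you simply make the Tor-symmetry step explicit), and for part~(b) the paper likewise specializes~(a) at $i=0$ and invokes Proposition~\ref{cor100319a}. Your additional verification of the hypotheses of Proposition~\ref{cor100319a} is helpful detail that the paper leaves implicit.
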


\begin{proof}
\eqref{cor100607b1}
The first step in the next sequence comes from
Theorem~\ref{prop100421b}\eqref{prop100421b2}:
\begin{align*}
\Ext{i}{M}{M'}
&\cong\mdd{\Ext{i}{M}{M'}}
\cong\md{(\Tor{i}{M}{\md{M'}})}
\cong\Ext{i}{\md{M'}}{\md{M}}.
\end{align*}
The remaining steps are from Theorem~\ref{prop100601a}
and Remark~\ref{disc100602b}, respectively. 

\eqref{cor100607b2}
This follows from the case $i=0$ in part~\eqref{cor100607b1}
because of Proposition~\ref{cor100319a}.
\end{proof}

\begin{prop}\label{prop100416d}
Let $A$ and $L$ be $R$-modules
such that $A$ is  artinian and $\beta^R_0(L)<\infty$.
Then
$$\att_{\comp R}(\Otimes{A}{L})
=\ass_{\comp R}(\md A)\cap\supp_{\comp R}(\md{\Gamma_{\m}(\md L)})
=\att_{\comp R}(A)\cap\supp_{\comp R}(\md{\Gamma_{\m}(\md L)}).$$
\end{prop}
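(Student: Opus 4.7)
The plan is to dualize the tensor product to a Hom, and then reduce to Proposition~\ref{cor100319a} applied with $L$ replaced by $\md L$.

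First, I will verify that $\Otimes{A}{L}$ is artinian. By Theorem~\ref{thm100320b} applied with $i=0$ (noting $\beta^R_0(L)<\infty$), the module $\Otimes{A}{L}\cong\Tor{0}{A}{L}$ is artinian over $R$, hence also over $\comp R$ by Lemma~\ref{lem100423c}. In particular it is $\m$-torsion, and since $\comp R$ is complete, Lemma~\ref{lem100215a} (applied over $\comp R$) yields
\begin{equation*}
\att_{\comp R}(\Otimes{A}{L})=\ass_{\comp R}(\md{(\Otimes{A}{L})}).
\end{equation*}

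Next, Hom-tensor adjointness gives an isomorphism
\begin{equation*}
\md{(\Otimes{A}{L})}=\Hom{\Otimes{A}{L}}{E}\cong\Hom{A}{\Hom{L}{E}}=\Hom{A}{\md{L}}.
\end{equation*}
Combining these, I obtain
\begin{equation*}
\att_{\comp R}(\Otimes{A}{L})\cong\ass_{\comp R}(\Hom{A}{\md{L}}).
\end{equation*}

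Now I want to apply Proposition~\ref{cor100319a} with $\md L$ in place of $L$. The hypothesis to check is that $\mu^0_R(\md L)<\infty$, and this is immediate from Lemma~\ref{lem100213c}\eqref{lem100213c5}: $\mu^0_R(\md L)=\beta^R_0(L)<\infty$ by assumption. Applying Proposition~\ref{cor100319a} to the artinian module $A$ and the module $\md L$ then gives
\begin{equation*}
\ass_{\comp R}(\Hom{A}{\md L})=\ass_{\comp R}(\md A)\cap\supp_{\comp R}(\md{\Gamma_\m(\md L)})=\att_{\comp R}(A)\cap\supp_{\comp R}(\md{\Gamma_\m(\md L)}),
\end{equation*}
which, combined with the previous identification, yields the desired pair of equalities.

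I do not expect any substantial obstacle: the hardest step is really just bookkeeping, namely to confirm that the duality statement $\att_{\comp R}(\Otimes{A}{L})=\ass_{\comp R}(\md{(\Otimes{A}{L})})$ from Lemma~\ref{lem100215a} applies over $\comp R$ without needing a completeness hypothesis on $R/\ann_R(\Otimes A L)$. This is legitimate because $\comp R$ is itself a complete local ring, so every quotient $\comp R/\ann_{\comp R}(-)$ is complete, which is all that the proof of Lemma~\ref{lem100215a} requires. Everything else is a direct application of adjointness and the previous proposition.
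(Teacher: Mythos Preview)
Your proof is correct and follows essentially the same route as the paper: show $\Otimes{A}{L}$ is artinian via Theorem~\ref{thm100320b}, convert attached primes to associated primes of the Matlis dual via Lemma~\ref{lem100215a}, identify that dual with $\Hom{A}{\md L}$ by Hom-tensor adjointness, and then invoke Proposition~\ref{cor100319a} with $\md L$ in place of $L$ after checking $\mu^0_R(\md L)=\beta^R_0(L)<\infty$ via Lemma~\ref{lem100213c}\eqref{lem100213c5}. The only cosmetic difference is that the paper passes to $\Hom[\comp R]{-}{E}$ explicitly via Lemma~\ref{lem100206c1}\eqref{lem100206c1b} before applying Lemma~\ref{lem100215a}, whereas you apply Lemma~\ref{lem100215a} directly over $\comp R$; these amount to the same thing.
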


\begin{proof}
Theorem~\ref{thm100320b} implies that $\Otimes{A}{L}$ is artinian.
Hence, we have
$$
\Hom[\comp R]{\Otimes{A}{L}}{E}\cong\Hom{\Otimes{A}{L}}{E}
\cong\Hom{A}{\md L}$$
by Lemma~\ref{lem100206c1}\eqref{lem100206c1b}, 
and this explains the second step in the next sequence:
\begin{align*}
\att_{\comp R}(\Otimes{A}{L})
&=\ass_{\comp R}(\Hom[\comp R]{\Otimes{A}{L}}{E})
=\ass_{\comp R}(\Hom{A}{\md L})
\end{align*}
The first step  is from Lemma~\ref{lem100215a}.
Since $\mu^0_R(\md L)<\infty$ by Lemma~\ref{lem100213c}\eqref{lem100213c5},
we obtain the desired equalities from Proposition~\ref{cor100319a}.
\end{proof}

Next, we give an alternate description of the module $\md{\Gamma_{\m}(L)}$
from the previous results.
See Lemma~\ref{lem100215a} for a description of its support.

\begin{disc} \label{disc100319c}
Let $L$ be an $R$-module.
There is an isomorphism
$\md{\Gamma_{\m}(L)}\cong\comp{\md{L}}$.
In particular, given a noetherian $R$-module $N$, 
one has
$\md{\Gamma_{\m}(\md{N})}\cong\Otimes{\comp R}{N}$.
When $R$ is Cohen-Macaulay
with a dualizing module $D$,
Grothendieck's local duality theorem 
implies that
$\md{\Gamma_{\m}(N)}\cong\Otimes{\comp{R}}{\Ext{\dim(R)}{N}{D}}$;
see, e.g., \cite[Theorem~3.5.8]{bruns:cmr}.
A similar description is available when $R$ is not Cohen-Macaulay,
provided that it has a dualizing complex; see~\cite[Chapter V, \S 6]{hartshorne:rad}.
\end{disc}

\subsection*{Vanishing of Hom and Tensor Product}

\

\vspace{1mm}

\noindent
For the next result 
note that if $L$ is noetherian, then the conditions on $\mu^0_R(L)$
and $R/(\ann_R(A)+\ann_R(\Gamma_{\m}(L)))$  are automatically satisfied.
Also, the example $\Hom{E}{E}\cong R$ when $R$ is complete shows the necessity of the
condition on $R/(\ann_R(A)+\ann_R(\Gamma_{\m}(L)))$.

\begin{prop} \label{prop100319b}
Let $A$ be an artinian $R$-module. Let $L$ be an $R$-module
such  that $R/(\ann_R(A)+\ann_R(\Gamma_{\m}(L)))$ is artinian and $\mu^0_R(L)<\infty$.
Then $\Hom AL=0$ if and only if $A=\m A$ or $\Gamma_{\m}(L)= 0$.
\end{prop}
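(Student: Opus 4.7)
The plan is to prove each implication separately. The ``if'' direction splits into two subcases according to which alternative holds; the ``only if'' direction is handled contrapositively by constructing a nonzero map $A \to L$ out of the two nonvanishing hypotheses.

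For the ``if'' direction, first suppose $\Gamma_{\m}(L) = 0$. As $A$ is artinian and hence $\m$-torsion by Fact~\ref{para0z}\eqref{para0z1}, Lemma~\ref{lem100206c1}\eqref{lem100206c1a} gives $\Hom{A}{L} \cong \Hom{A}{\Gamma_{\m}(L)} = 0$. Next, suppose $A = \m A$, and set $\fa = \ann_R(A)$ and $\fb = \ann_R(\Gamma_{\m}(L))$. Since $R/(\fa+\fb)$ is artinian, its maximal ideal is nilpotent, so $\m^N \subseteq \fa+\fb$ for some $N \geq 1$. Iterating $A = \m A$ and using $\fa A = 0$ gives $A = \m^N A \subseteq (\fa+\fb)A = \fb A \subseteq A$, so $A = \fb A$. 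For any $\psi \in \Hom{A}{\Gamma_{\m}(L)}$ it follows that $\psi(A) = \psi(\fb A) = \fb\, \psi(A) \subseteq \fb\, \Gamma_{\m}(L) = 0$. Thus $\Hom{A}{\Gamma_{\m}(L)} = 0$, and Lemma~\ref{lem100206c1}\eqref{lem100206c1a} once more identifies this with $\Hom{A}{L}$.

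For the converse, assume $A \neq \m A$ and $\Gamma_{\m}(L) \neq 0$, and produce a nonzero element of $\Hom{A}{L}$. Since $A/\m A$ is a nonzero $k$-vector space, it admits an $R$-linear surjection onto $k$, which composes with the natural projection to give $\pi \colon A \twoheadrightarrow k$. Since $\Gamma_{\m}(L)$ is a nonzero $\m$-torsion module, it contains a copy of $k$: if $0 \neq x \in \Gamma_{\m}(L)$ and $n \geq 1$ is minimal with $\m^n x = 0$, then any nonzero $y \in \m^{n-1}x$ is annihilated by $\m$, so $Ry \cong k$ embeds into $\Gamma_{\m}(L) \subseteq L$. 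The composition of $\pi$ with this embedding is a nonzero element of $\Hom{A}{L}$.

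The key step, and really the only one that is not completely formal, is the reduction $A = \fb A$ in the case $A = \m A$, which combines the nilpotence of the maximal ideal of $R/(\fa+\fb)$ with the invariance of $A$ under multiplication by $\m$. The hypothesis $\mu^0_R(L) < \infty$ does not appear to be invoked directly in this argument; its purpose is presumably to guarantee (via Lemma~\ref{lem100423c}) that $\Gamma_{\m}(L)$ is artinian, mirroring the setup of the companion tensor-product and Ext/Tor results in this section.
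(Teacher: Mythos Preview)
Your proof is correct and takes a genuinely different route from the paper. The paper's argument is structural: after disposing of the case $\Gamma_{\m}(L)=0$, it invokes Theorem~\ref{thm100308a} and Lemma~\ref{lem100213b} to see that $\Hom{A}{L}$ has finite length, and then uses the associated-primes description of Proposition~\ref{cor100319a} to reduce the question to whether $\m\comp R\in\ass_{\comp R}(\md A)$, which by Lemma~\ref{lem100213c}\eqref{lem100213c1} is equivalent to $A\neq\m A$. Your argument is instead completely elementary: for the ``if'' direction you extract from the artinian hypothesis on $R/(\fa+\fb)$ that $A=\fb A$, which kills every map into $\Gamma_{\m}(L)$, and for the converse you build a nonzero map by hand through a copy of $k$.

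What each approach buys: the paper's proof situates the result within the framework of Section~\ref{sec100317b} and illustrates how Proposition~\ref{cor100319a} controls vanishing; your proof is self-contained and, as you correctly observe, never uses the hypothesis $\mu^0_R(L)<\infty$. So your argument in fact establishes a slightly stronger statement. The paper needs that hypothesis to invoke Proposition~\ref{cor100319a} (and to know $\Gamma_{\m}(L)$ is artinian), but your direct construction bypasses this entirely.
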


\begin{proof}
If $\Gamma_{\m}(L)=0$, then we are done
by Lemma~\ref{lem100206c1}\eqref{lem100206c1a}, so assume that
$\Gamma_{\m}(L)\neq 0$. 
Theorem~\ref{thm100308a} and Lemma~\ref{lem100213b} show
that $\Hom AL$ has finite length.
Thus Proposition~\ref{cor100319a}
implies that $\Hom AL\neq 0$ if and only if $\m\comp R\in\ass_{\comp R}(\md A)$,
that is, if and only if $\depth_{\comp R}(\md A)=0$.
Lemma~\ref{lem100213c}\eqref{lem100213c1} shows that 
$\depth_{\comp R}(\md A)=0$ if and only if $\m\comp RA\neq A$,
that is, if and only if $\m A\neq A$.
\end{proof}

For the next result 
note that the conditions on $L$ are satisfied when $L$ is artinian.

\begin{prop}\label{prop100419a}
Let $A$ be an artinian $R$-module,
and let $L$ be an $\m$-torsion $R$-module.
The following conditions are equivalent:
\begin{enumerate}[\rm(i)]
\item \label{prop100419a1a} $A\otimes_RL=0$;
\item \label{prop100419a1b} 
either $A=\m A$ or $L=\m L$; and
\item \label{prop100419a1c} 
either $\depth_R(A^{\vee})>0$ or $\depth_R(L^{\vee})>0$.
\end{enumerate}
\end{prop}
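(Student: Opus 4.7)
The plan is to prove the equivalences by first disposing of (ii) $\iff$ (iii) as a direct consequence of Lemma~\ref{lem100213c}, and then establishing (i) $\iff$ (ii) using Lemma~\ref{lemma1} in one direction and a right-exactness argument in the other.

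For (ii) $\iff$ (iii): Apply Lemma~\ref{lem100213c}\eqref{lem100213c1} to $A$ and to $L$ with $\fa = \m$. This yields $A = \m A \iff \depth_R(\md{A}) > 0$ and $L = \m L \iff \depth_R(\md{L}) > 0$. Taking the disjunction on each side gives the equivalence.

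For (ii) $\Rightarrow$ (i): Since $A$ is artinian, choose $t \geq 0$ with $\m^t A = \m^{t+1} A$. Because $L$ is $\m$-torsion, Lemma~\ref{lemma1} applies and gives
\[
A \otimes_R L \cong (A/\m^t A) \otimes_R (L/\m^t L).
\]
If $A = \m A$, then iterating gives $A = \m^n A$ for all $n$, so one may take $t = 0$ and $A/\m^t A = 0$. If instead $L = \m L$, then similarly $L = \m^n L$ for all $n$, so $L/\m^t L = 0$. In either case the right-hand side vanishes.

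For (i) $\Rightarrow$ (ii): The surjection $A \onto A/\m A$ induces a surjection $A \otimes_R L \onto (A/\m A) \otimes_R L$, so from $A \otimes_R L = 0$ we get $(A/\m A) \otimes_R L = 0$. Since $A$ is artinian, $A/\m A$ is a finite-dimensional $k$-vector space of dimension $\beta_0^R(A)$, hence isomorphic as an $R$-module to $k^{\beta_0^R(A)}$. Therefore
\[
(A/\m A) \otimes_R L \cong (L/\m L)^{\beta_0^R(A)},
\]
and this vanishes if and only if $\beta_0^R(A) = 0$ (equivalently $A = \m A$, by Nakayama applied to $A/\m A = 0$) or $L/\m L = 0$ (equivalently $L = \m L$). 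This completes the equivalence. No step is a serious obstacle here; the only point requiring a bit of care is the reduction to $(A/\m A) \otimes_R L$ and its identification with a power of $L/\m L$, but this is routine once one observes that $A/\m A$ has finite $k$-dimension.
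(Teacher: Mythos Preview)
Your proof is correct and follows essentially the same strategy as the paper's. The equivalence (ii)$\iff$(iii) is handled identically via Lemma~\ref{lem100213c}\eqref{lem100213c1}. For (i)$\iff$(ii), the paper packages both directions through length bounds: it cites Theorem~\ref{cor28} for (ii)$\Rightarrow$(i) and the inequality $\len_R(A\otimes_RL)\geq\beta_0^R(A)\beta_0^R(L)$ for (i)$\Rightarrow$(ii). You instead appeal directly to Lemma~\ref{lemma1} for one direction and to the explicit surjection $A\otimes_RL\onto(A/\m A)\otimes_RL\cong(L/\m L)^{\beta_0^R(A)}$ for the other, which is the same content unpacked---in fact your surjection is exactly what justifies the paper's unstated inequality. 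One cosmetic point: the appeal to ``Nakayama'' is unnecessary, since $\beta_0^R(A)=0$ literally means $A/\m A=0$, i.e.\ $A=\m A$.
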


\begin{proof}
\eqref{prop100419a1a}$\iff$\eqref{prop100419a1b}
If $\Otimes{A}{L}=0$, then we have
$$0=\len_R(\Otimes{A}{L})\geq\beta^R_0(A)\beta^R_0(L)$$
so either $\beta^R_0(A)=0$ or $\beta^R_0(L)=0$, that is
$A/\m A=0$ or $L/\m L=0$.
Conversely, if $A/\m A=0$ or $L/\m L=0$,
then we have either $\beta^R_0(A)=0$ or $\beta^R_0(L)=0$, so
Theorem~\ref{cor28} implies that 
$\len_R(\Otimes{A}{L})=0$.

The implication 
\eqref{prop100419a1b}$\iff$\eqref{prop100419a1c}
is from Lemma~\ref{lem100213c}\eqref{lem100213c1}.
\end{proof}

The next result becomes simpler when $L$ is artinian, as $\Gamma_{\m}(L)=L$ in this case.

\begin{thm}\label{prop100320a}  
Let $A$ and $L$ be $R$-modules such that $A$ is artinian and 
$\mu^0_R(L)<\infty$.
The following conditions are 
\label{prop100308b}
equivalent:
\begin{enumerate}[\rm(i)]
\item \label{prop100320a3'} 
$\Hom{A}{L}=0$;
\item \label{prop100320a3}  \label{prop100308b3} 
$\Hom{A}{\Gamma_{\m}(L)}=0$;
\item \label{prop100320a7}  \label{prop100308b7} 
$\Hom[\comp R]{\md{\Gamma_{\m}(L)}}{\md{A}}=0$;
\item \label{prop100320a6x}  \label{prop100308b6x} 
there is an element $x\in\ann_{\comp R}(\Gamma_{\m}(L))$ such that $A=xA$; 
\item \label{prop100320a6}  \label{prop100308b6} 
$\ann_{\comp R}(\Gamma_{\m}(L)) A=A$; 
\item \label{prop100320a8}  \label{prop100308b8} 
$\ann_{\comp R}(\Gamma_{\m}(L))$ contains a non-zero-divisor for $\md A$; and
\item \label{prop100320a5}  \label{prop100308b5} 
$\att_{\comp R}(A)\cap\supp_{\comp R}(\md{\Gamma_{\m}(L)})=\emptyset$.
\end{enumerate}
\end{thm}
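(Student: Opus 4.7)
The plan is to route the seven conditions through two workhorse tools: the Hom-duality isomorphism of Lemma~\ref{lem100206c2}, and the classical criterion $\Hom{N}{M}=0$ iff $\ass(M)\cap\supp(N)=\emptyset$ for noetherian $N$. Before starting, I would observe that $\Gamma_{\m}(L)$ is itself artinian: it is $\m$-torsion by construction, and $\mu^0_R(\Gamma_{\m}(L))=\mu^0_R(L)<\infty$ because every map $k\to L$ factors through $\Gamma_{\m}(L)$, so Lemma~\ref{lem100423c} applies. Both $A$ and $\Gamma_{\m}(L)$ are then artinian $R$-modules and hence $\comp R$-modules that are Matlis reflexive over $\comp R$ by Fact~\ref{para1}, since every quotient of the complete ring $\comp R$ is again complete.

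With this setup, $(\mathrm{i})\iff(\mathrm{ii})$ is immediate from Lemma~\ref{lem100206c1}\eqref{lem100206c1a}, and $(\mathrm{ii})\iff(\mathrm{iii})$ follows by applying Lemma~\ref{lem100206c2}\eqref{lem100206c2a} with the roles reversed: $\Gamma_{\m}(L)$ plays the artinian part and $A$ plays the $\m$-torsion part, yielding $\Hom{A}{\Gamma_{\m}(L)}\cong\Hom[\comp R]{\md{\Gamma_{\m}(L)}}{\md A}$. For $(\mathrm{iii})\iff(\mathrm{vii})$, I would use that $\md{\Gamma_{\m}(L)}$ is a noetherian $\comp R$-module (Lemma~\ref{lem100423f}), so the standard criterion gives $\Hom[\comp R]{\md{\Gamma_{\m}(L)}}{\md A}=0$ iff $\ass_{\comp R}(\md A)\cap\supp_{\comp R}(\md{\Gamma_{\m}(L)})=\emptyset$, and Lemma~\ref{lem100215a} replaces $\ass_{\comp R}(\md A)$ by $\att_{\comp R}(A)$.

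To close the loop, set $\fa=\ann_{\comp R}(\Gamma_{\m}(L))$. Matlis reflexivity of $\Gamma_{\m}(L)$ over $\comp R$ forces $\ann_{\comp R}(\md{\Gamma_{\m}(L)})=\fa$, so $\supp_{\comp R}(\md{\Gamma_{\m}(L)})=V(\fa)$; then $(\mathrm{vi})\iff(\mathrm{vii})$ reduces to prime avoidance applied to the finite set $\att_{\comp R}(A)$. For $(\mathrm{v})\iff(\mathrm{vi})$, apply Lemma~\ref{lem100213c}\eqref{lem100213c1} and \eqref{lem100213c3} over $\comp R$: the equation $A=\fa A$ is equivalent to $\depth_{\comp R}(\fa;\md A)>0$, which is in turn equivalent to $\fa$ containing a non-zero-divisor on $\md A$. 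Finally, $(\mathrm{iv})\iff(\mathrm{v})$ is the same equivalence specialized to the principal ideal $(x)$: one direction is trivial, and for the other, (vi) produces an element $x\in\fa$ that is a non-zero-divisor on $\md A$, and then $xA=A$ by Lemma~\ref{lem100213c}\eqref{lem100213c1}.

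The main potential obstacle is bookkeeping rather than any single deep step: the statement moves freely between $R$- and $\comp R$-actions, between annihilators of artinian modules and of their Matlis duals, and between associated and attached primes. The cleanest organizing principle is to push everything over to $\comp R$ at the outset (legitimate since both $A$ and $\Gamma_{\m}(L)$ are $\m$-torsion) and then exploit Matlis reflexivity over $\comp R$ to freely interchange $\ann_{\comp R}(-)$ with $\ann_{\comp R}(\md{(-)})$ and $\att_{\comp R}(-)$ with $\ass_{\comp R}(\md{(-)})$ as needed.
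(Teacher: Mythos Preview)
Your argument is correct and follows essentially the same route as the paper: both establish (i)$\iff$(ii) via Lemma~\ref{lem100206c1}\eqref{lem100206c1a}, (ii)$\iff$(iii) via Lemma~\ref{lem100206c2}\eqref{lem100206c2a}, and link (iv), (v), (vi) through Lemma~\ref{lem100213c} and the duality between surjectivity of $x$ on $A$ and injectivity on $\md A$. The only cosmetic difference is that the paper reaches (vii) by invoking the packaged Proposition~\ref{cor100319a} and then ties (iii)$\iff$(vi) via the standard criterion \cite[Proposition~1.2.3]{bruns:cmr}, whereas you go (iii)$\iff$(vii) directly via the Bourbaki formula $\ass(\Hom{N}{M})=\supp(N)\cap\ass(M)$ and then (vi)$\iff$(vii) by prime avoidance---same content, slightly different wiring.
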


\begin{proof}
The equivalence \eqref{prop100320a3'}$\iff$\eqref{prop100320a3}
is from Lemma~\ref{lem100206c1}\eqref{lem100206c1a}.
The equivalence \eqref{prop100308b3}$\iff$\eqref{prop100308b5}
follows from Proposition~\ref{thm100308b}, and 
the 
equivalence 
\eqref{prop100308b3}$\iff$\eqref{prop100308b7} follows from 
Lemma~\ref{lem100206c2}\eqref{lem100206c2a}.
The equivalence \eqref{prop100308b6x}$\iff$\eqref{prop100308b8} 
follows from the fact that the map $A\xra{x}A$ is surjective
if and only if the map $\md A\xra{x}\md A$ is injective. The equivalence 
\eqref{prop100308b6}$\iff$\eqref{prop100308b8} follows
from Lemma~\ref{lem100213c}, parts~\eqref{lem100213c1} and~\eqref{lem100213c3}.

The module $\Gamma_{\m}(L)$ is artinian as 
$\mu^0_R(L)<\infty$.
Since $\md A$ and $\md{\Gamma_{\m}(L)}$ are noetherian over $\comp R$,
the equivalence \eqref{prop100308b7}$\iff$\eqref{prop100308b8}
is standard; see~\cite[Proposition~1.2.3]{bruns:cmr}.
\end{proof}

As with Theorem~\ref{prop100320a}, the next result simplifies when $L$ is noetherian.
Also, see Remark~\ref{disc100319c} for some perspective on the module $\md{\Gamma_{\m}(\md L)}$.

\begin{cor}\label{prop100526a}
Let $A$ be a non-zero artinian $R$-module, and let $L$ be an $R$-module
such that $\beta^R_0(L)<\infty$. The following conditions are equivalent:
\begin{enumerate}[\rm(i)]
\item \label{prop100526a1}
$\Otimes{A}{L}=0$;
\item \label{prop100526a2}
$\ann_{\comp R}(\tors{m}{\md{L}})A=A$; 
\item \label{prop100526a7}
there is an element 
$x\in\ann_{\comp R}(\tors{m}{\md{L}})$ such that $xA=A$; 
\item \label{prop100526a3}
$\ann_{\comp R}(\tors{m}{\md{L}})$ contains a non-zero-divisor for $\md A$; and
\item \label{prop100526a4}
$\att_{\comp R}(A)\cap\supp_{\comp R}(\md{\Gamma_{\m}(\md L)})=\emptyset$.
\end{enumerate}
\end{cor}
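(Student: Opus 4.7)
The plan is to recognize this corollary as essentially Theorem~\ref{prop100320a} applied to the pair $(A,\md L)$, after a Matlis-duality reduction that converts the vanishing of the tensor product into the vanishing of a Hom.

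First, I would verify that Theorem~\ref{prop100320a} applies to $(A,\md L)$. The hypothesis $\beta_0^R(L)<\infty$ together with Lemma~\ref{lem100213c}\eqref{lem100213c5} gives
\[
\mu_R^0(\md L)=\beta_0^R(L)<\infty,
\]
so the theorem is available with $\md L$ in place of $L$. Reading off the resulting equivalences, the conditions
$\ann_{\comp R}(\Gamma_{\m}(\md L))A=A$, the existence of $x\in\ann_{\comp R}(\Gamma_{\m}(\md L))$ with $xA=A$, the non-zero-divisor condition on $\md A$, and the disjointness statement $\att_{\comp R}(A)\cap\supp_{\comp R}(\md{\Gamma_{\m}(\md L)})=\emptyset$ correspond verbatim to conditions (ii)--(v) of the corollary.

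Second, I would establish the equivalence between (i) and the additional condition $\Hom{A}{\md L}=0$. Hom-tensor adjointness gives
\[
\md{(\Otimes{A}{L})}=\Hom{\Otimes{A}{L}}{E}\cong\Hom{A}{\Hom{L}{E}}=\Hom{A}{\md L},
\]
and since $E$ is a faithful injective cogenerator over the local ring $R$, an $R$-module $X$ vanishes if and only if $\md X$ vanishes. Consequently,
\[
\Otimes{A}{L}=0\;\Longleftrightarrow\;\md{(\Otimes{A}{L})}=0\;\Longleftrightarrow\;\Hom{A}{\md L}=0,
\]
which is condition (i) of Theorem~\ref{prop100320a} applied to $(A,\md L)$.

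Putting the two steps together, conditions (i)--(v) of the corollary are in bijection with five of the conditions in Theorem~\ref{prop100320a} applied to $(A,\md L)$, so their equivalence is immediate. There is no real obstacle here; the only point requiring any care is the bookkeeping that the annihilator and support conditions translate correctly under the substitution $L\rightsquigarrow\md L$. Alternatively, the equivalence (i)$\iff$(v) could be obtained directly from Proposition~\ref{prop100416d} combined with the fact that an artinian module vanishes iff its set of attached primes is empty (Theorem~\ref{thm100320b} guarantees $\Otimes{A}{L}$ is artinian), but the uniform reduction to Theorem~\ref{prop100320a} is cleaner since it delivers all four remaining equivalences at once.
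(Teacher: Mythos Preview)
Your proposal is correct and matches the paper's approach: the paper also reduces to Theorem~\ref{prop100320a} via the adjunction isomorphism $\md{(\Otimes{A}{L})}\cong\Hom{A}{\md L}$ (citing Remark~\ref{disc100602b}) after noting $\mu^0_R(\md L)<\infty$. The only cosmetic difference is that the paper handles \eqref{prop100526a1}$\iff$\eqref{prop100526a4} separately via Proposition~\ref{prop100416d} and Lemma~\ref{lem100215a}, whereas you pull that equivalence from Theorem~\ref{prop100320a} as well---your alternative remark already anticipates this.
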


\begin{proof}
For an artinian $R$-module $A'$, one has $\att_{\comp R}(A')=\emptyset$ if and only if $A'=0$
by Lemma~\ref{lem100215a}.
 Thus, Proposition~\ref{prop100416d} explains the equivalence
\eqref{prop100526a1}$\iff$\eqref{prop100526a4};
see~\cite[Corollary 2.3]{ooishi:mdwm}.
Since one has $\Otimes{A}{L}=0$ if and only if
$\md{(\Otimes{A}{L})}=0$,
the isomorphism
$\md{(\Otimes{A}{L})}\cong\Hom{A}{\md{L}}$
from Remark~\ref{disc100602b}
in conjunction with Theorem~\ref{prop100320a}
shows that the conditions~\eqref{prop100526a1}--\eqref{prop100526a3}
are equivalent.
\end{proof}

\subsection*{Depth and Vanishing}

\begin{prop}\label{prop100308a}
Let $A$ and $L$ be $R$-modules such that $A$ is artinian.
Then $\ext^i_R(A,L)=0$ for all $i<\depth_R(L)$.
\end{prop}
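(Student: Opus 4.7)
The plan is to compute $\Ext{i}{A}{L}$ via a minimal injective resolution $I$ of $L$ and to show directly that $\Hom{A}{I^i}=0$ for all $i<\depth_R(L)$. Unraveling Definition~\ref{defn100616a}, the condition $i<\depth_R(L)$ means $\Ext{i}{k}{L}=0$, i.e., $\mu^i_R(L)=0$. In particular $\mu^i_R(L)<\infty$, so Remark~\ref{disc100414a}\eqref{disc100414a2} applies and supplies a decomposition $I^i\cong E^{\mu^i_R(L)}\oplus J^i$ with $\Gamma_\m(J^i)=0$; as the first summand disappears, one obtains $\Gamma_\m(I^i)=0$ for every such $i$.

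Next, since $A$ is artinian, Fact~\ref{para0z}\eqref{para0z1} says that $A$ is $\m$-torsion, so Lemma~\ref{lem100206c1}\eqref{lem100206c1a} gives
\[
\Hom{A}{I^i}\cong\Hom{A}{\Gamma_\m(I^i)}=0
\]
for each $i<\depth_R(L)$. Computing $\Ext{i}{A}{L}$ as the $i$-th cohomology of the complex $\Hom{A}{I}$, both the term in degree $i$ and the term in degree $i-1$ vanish in this range, so $\Ext{i}{A}{L}=0$, as desired.

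There is no serious obstacle here: the argument is a direct unwinding of the definition of depth combined with two structural tools already established above, namely the behavior of minimal injective resolutions at the maximal ideal $\m$ and the $\m$-torsion reformulation of Hom. Notably, no finiteness assumption on $L$ is needed, because the tautological inequality $\mu^i_R(L)=0<\infty$ in the relevant range suffices to invoke Remark~\ref{disc100414a}\eqref{disc100414a2}.
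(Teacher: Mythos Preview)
Your proof is correct and follows essentially the same line as the paper's own argument: both take a minimal injective resolution, use $\mu^i_R(L)=0$ for $i<\depth_R(L)$ to conclude that $\Gamma_{\m}$ of the $i$th injective vanishes, and then invoke the $\m$-torsion Hom identity to get $\Hom{A}{I^i}=0$. The paper simply abbreviates that last step by referring back to the proof of Theorem~\ref{thm100308a}, whereas you spell it out via Fact~\ref{para0z}\eqref{para0z1} and Lemma~\ref{lem100206c1}\eqref{lem100206c1a}.
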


\begin{proof}
Let $J$ be a minimal $R$-injective resolution of $L$,
and let $i<\depth_R(L)$.
It follows that $\Ext{i}{k}{L}=0$, that is $\mu^i_R(L)=0$, so
the module $E$ does not appear as a summand
of $J^{i}$. As in the proof of Theorem~\ref{thm100308a}, 
this implies that $\Hom AJ^{i}=0$, so $\ext^i_R(A,L)=0$.
\end{proof}

The next example shows that, in 
Proposition~\ref{prop100308a} one may have
$\ext^i_R(A,L)=0$ when $i=\depth_R(L)$.
See also equation~\eqref{prop100616a1}.

\begin{ex} \label{ex100215a}
Assume that $\depth(R)\geq 1$.
Then $\m E=E$ by Lemma~\ref{lem100213c}\eqref{lem100213c1}, so Lemma~\ref{lem100312b} implies that
$$\Ext 0Ek\cong\Hom Ek\cong\Hom{E/\m E}{k}=0$$
even though $\depth_R(k)=0$.
\end{ex}

\begin{prop}\label{prop100419b}
Let $A$ and $L$ be   $R$-modules such that $A$ is artinian.
Then for all $i<\depth_R(\md L)$ one has $\Tor{i}{A}{L}=0$.
\end{prop}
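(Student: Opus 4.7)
The plan is to reduce this statement to the Ext-vanishing already established in Proposition~\ref{prop100308a}, using Matlis duality to swap Tor for Ext. Specifically, the isomorphism
\[
\md{\Tor{i}{A}{L}}\cong\Ext{i}{A}{\md L}
\]
recorded in Remark~\ref{disc100602b} (a direct consequence of Hom-tensor adjointness) converts the vanishing of $\Tor{i}{A}{L}$ into the vanishing of $\Ext{i}{A}{\md L}$.

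First I would observe that the hypothesis $i<\depth_R(\md L)$ is precisely the hypothesis one needs in Proposition~\ref{prop100308a} applied to the artinian module $A$ and the module $\md L$ in place of $L$. That proposition then yields $\Ext{i}{A}{\md L}=0$ for every $i<\depth_R(\md L)$. Combined with the adjointness isomorphism above, this gives $\md{\Tor{i}{A}{L}}=0$.

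Finally, to conclude $\Tor{i}{A}{L}=0$ I would use that Matlis duality is faithful, i.e., $M^{\vee}=0$ implies $M=0$, because $E$ is an injective cogenerator of $R$-modules (any nonzero element of $M$ generates a cyclic submodule which admits a nonzero map to $E$). There is no real obstacle here; the only thing to be a little careful about is that the reference to Remark~\ref{disc100602b} gives the isomorphism in complete generality (it is not an assertion of naturality that requires noetherian hypotheses), so no extra assumptions on $L$ are needed beyond those stated.
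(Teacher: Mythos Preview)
Your proof is correct and follows exactly the same route as the paper: dualize via $\md{\Tor{i}{A}{L}}\cong\Ext{i}{A}{\md L}$ from Remark~\ref{disc100602b}, invoke Proposition~\ref{prop100308a} for the vanishing of $\Ext{i}{A}{\md L}$ when $i<\depth_R(\md L)$, and then use faithfulness of $(-)^{\vee}$ to conclude. The paper's proof is just a one-line version of what you wrote.
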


\begin{proof}
When $i<\depth_R(\md{L})$,
one has
$\md{\Tor{i}{A}{L}}\cong\Ext{i}{A}{\md{L}}=0$
by Remark~\ref{disc100602b} and Proposition~\ref{prop100308a},
so $\Tor{i}{A}{L}=0$.
\end{proof}

\begin{thm}\label{prop100616a}
Let $A$ and $A'$ be artinian $R$-modules,
and let $N$ and $N'$ be noetherian $R$-modules. Then one has
\begin{align}
\label{prop100616a1}
\depth_{\comp R}(\ann_{\comp R}(A');\md{A})
&=\inf\{i\geq 0\mid\Ext{i}{A}{A'}\neq 0\}
\\
\label{prop100616a2}
\depth_{R}(\ann_{R}(N');\md{A})
&=\inf\{i\geq 0\mid\Ext{i}{A}{\md{N'}}\neq 0\}\\
\label{prop100616a3}
\depth_{R}(\ann_{R}(N');N)
&=\inf\{i\geq 0\mid\Ext{i}{\md{N}}{\md{N'}}\neq 0\}.
\end{align}
\end{thm}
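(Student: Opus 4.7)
The plan is to reduce all three equalities to the classical depth--sensitivity formula for $\Ext$ over a noetherian ring, namely that for a noetherian ring $S$, a finitely generated $S$-module $M$, and any $S$-module $L$ one has $\inf\{i\geq 0\mid\Ext[S]{i}{M}{L}\neq 0\}=\depth_S(\ann_S(M);L)$. Theorem~\ref{prop100317a} will convert $\Ext$ involving artinian modules into $\Ext$ between noetherian $\comp R$-modules, and flat base change will move depth computations between $R$ and $\comp R$ as needed.

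For~\eqref{prop100616a1}, I would first apply Theorem~\ref{prop100317a} to get $\Ext{i}{A}{A'}\cong\Ext[\comp R]{i}{\md{A'}}{\md A}$. Both $\md{A'}$ and $\md A$ are noetherian over $\comp R$ by Lemma~\ref{lem100423f}\eqref{lem100423f1}, so depth--sensitivity over $\comp R$ identifies the right-hand infimum with $\depth_{\comp R}(\ann_{\comp R}(\md{A'});\md A)$. The argument closes with the routine identity $\ann_{\comp R}(\md{A'})=\ann_{\comp R}(A')$, which holds because $E$ is an injective cogenerator over~$\comp R$.

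For~\eqref{prop100616a2}, since $\md{N'}$ is artinian by Lemma~\ref{lem100423f}\eqref{lem100423f1}, Theorem~\ref{prop100317a} gives $\Ext{i}{A}{\md{N'}}\cong\Ext[\comp R]{i}{\mdd{N'}}{\md A}$. For noetherian $N'$, applying the exact functor $\mdd{(-)}$ to a finite presentation of $N'$ yields $\mdd{N'}\cong\Otimes{\comp R}{N'}$, which is noetherian over $\comp R$; moreover $\ann_{\comp R}(\Otimes{\comp R}{N'})=\ann_R(N')\comp R$ by flat base change applied to the natural exact sequence $0\to\ann_R(N')\to R\to N'^{n}$ sending $r$ to $(rx_1,\dots,rx_n)$ for generators $x_1,\dots,x_n$ of $N'$. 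Depth--sensitivity over $\comp R$ then identifies the infimum with $\depth_{\comp R}(\ann_R(N')\comp R;\md A)$, and one finishes with the base-change identity $\depth_R(\fa;M)=\depth_{\comp R}(\fa\comp R;M)$ for $\comp R$-modules $M$ and $R$-ideals $\fa$, which follows because a free resolution of $R/\fa$ over $R$ becomes one of $\comp R/\fa\comp R$ over $\comp R$ by flatness, and Hom--tensor adjointness gives $\Ext[\comp R]{i}{\comp R/\fa\comp R}{M}\cong\Ext{i}{R/\fa}{M}$.

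Equation~\eqref{prop100616a3} is handled by the same template: $\md N$ and $\md{N'}$ are both artinian, so Theorem~\ref{prop100317a} converts $\Ext{i}{\md N}{\md{N'}}$ into $\Ext[\comp R]{i}{\Otimes{\comp R}{N'}}{\Otimes{\comp R}{N}}$, depth--sensitivity over $\comp R$ yields $\depth_{\comp R}(\ann_R(N')\comp R;\Otimes{\comp R}{N})$, and flat base change (using faithful flatness of $\comp R$ over $R$ to detect vanishing of the finitely generated modules $\Ext{i}{R/\ann_R(N')}{N}$) translates this to $\depth_R(\ann_R(N');N)$. The main obstacle is purely clerical: keeping careful track of annihilators under Matlis duality and of the ring over which depth is being computed at each stage. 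The structural input is only Theorem~\ref{prop100317a} together with classical depth--sensitivity.
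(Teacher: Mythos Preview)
Your proposal is correct and follows essentially the same strategy as the paper: invoke Theorem~\ref{prop100317a} to pass to $\Ext$ between noetherian $\comp R$-modules, apply classical depth--sensitivity, and then chase annihilators through Matlis duality and flat base change. The only tactical difference is in~\eqref{prop100616a3}: the paper deduces it from~\eqref{prop100616a2} applied with $A=\md N$ and then uses the depth--width duality $\depth_R(\fa;\mdd N)=\width_R(\fa;\md N)=\depth_R(\fa;N)$ from Lemma~\ref{lem100213c}\eqref{lem100213c4}, whereas you apply Theorem~\ref{prop100317a} directly to get $\Ext[\comp R]{i}{\Otimes{\comp R}{N'}}{\Otimes{\comp R}{N}}$ and finish with the faithfully flat base-change identity $\depth_{\comp R}(\fa\comp R;\Otimes{\comp R}{N})=\depth_R(\fa;N)$; both routes are short and each avoids a small lemma the other uses.
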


\begin{proof}
We verify equation~\eqref{prop100616a1} first.
For each index $i$, Theorem~\ref{prop100317a} implies that
\begin{align*}
\Ext{i}{A}{A'}
&\cong\Ext[\comp R]{i}{\md{A'}}{\md{A}}.
\end{align*}
Since $\md{A}$ and $\md{A'}$ are noetherian over $\comp R$,
this explains the first equality below:
\begin{align*}
\inf\{i\geq 0\mid\Ext{i}{A}{A'}\neq 0\}
&=\depth_{\comp R}(\ann_{\comp R}(\md{A'});\md{A})
=\depth_{\comp R}(\ann_{\comp R}(A');\md{A}).
\end{align*}
The second equality is standard
since $\md{A'}=\Hom[\comp R]{A'}{E}$ by Lemma~\ref{lem100206c1}\eqref{lem100206c1b}.

Next, we verify equation~\eqref{prop100616a2}.
Since $\md{N'}$ is artinian, equation~\eqref{prop100616a1} shows that we need only
verify that 
\begin{equation}
\label{prop100616a4}
\depth_{\comp R}(\ann_{\comp R}(\md{N'});\md{A})=\depth_{R}(\ann_{R}(N');\md{A}).
\end{equation}
For this, we compute as follows:
\begin{align*}
\Otimes{\comp R}{N'}
&\stackrel{(1)}{\cong}\Hom[\comp R]{\Hom[\comp R]{\Otimes{\comp R}{N'}}{E}}{E}
\stackrel{(2)}{\cong}\Hom[\comp R]{\md{N'}}{E}.
\end{align*}
Step (1) follows from the fact that $\Otimes{\comp R}{N'}$ is noetherian
(hence, Matlis reflexive) over $\comp R$,
and step (2)  is from Hom-tensor adjointness.
This explains step (4) below:
\begin{align*}
\ann_{\comp R}(\md{N'})
&\stackrel{(3)}{=}\ann_{\comp R}(\Hom[\comp R]{\md{N'}}{E})
\stackrel{(4)}{=}\ann_{\comp R}(\Otimes{\comp R}{N'})
\stackrel{(5)}{=}\ann_{R}(N')\comp R.
\end{align*}
Steps (3) and (5) are standard. This explains  step (6) in the next sequence:
\begin{align*}
\depth_{\comp R}(\ann_{\comp R}(\md{N'});\md{A})
&\stackrel{(6)}{=}\depth_{\comp R}(\ann_{R}(N')\comp R;\md{A})
\stackrel{(7)}{=}\depth_{R}(\ann_{R}(N');\md{A}).
\end{align*}
Step (7) is explained by the following, where step (8) is standard, and step (9) is a consequence
of Hom-tensor adjointness:
\begin{align*}
\Ext[\comp R]{i}{\comp R/\ann_{R}(N')\comp R}{\md{A}}
&\stackrel{(8)}{\cong}\Ext[\comp R]{i}{\Otimes{\comp R}{(R/\ann_{R}(N'))}}{\md{A}}\\
&\stackrel{(9)}{\cong}\Ext{i}{R/\ann_{R}(N')}{\md{A}}.
\end{align*}
This establishes equation~\eqref{prop100616a4} and thus equation~\eqref{prop100616a2}.

Equation~\eqref{prop100616a3} 
follows from~\eqref{prop100616a2} 
because we have
\begin{equation*}
\depth_{R}(\ann_{R}(N');\mdd{N})
=
\width_{R}(\ann_{R}(N');\md{N})
=
\depth_{R}(\ann_{R}(N');N)
\end{equation*}
by Lemma~\ref{lem100213c}\eqref{lem100213c4}.
\end{proof}

\begin{cor}\label{prop100616b}
Let $A$ and $A'$ be artinian $R$-modules,
and let $N$ and $N'$ be noetherian $R$-modules. Then 
\begin{align}
\label{prop100616b1}
\depth_{\comp R}(\ann_{\comp R}(A');\md{A})
&=\inf\{i\geq 0\mid\Tor{i}{A}{\md{A'}}\neq 0\}
\\
\label{prop100616b2}
\depth_{R}(\ann_{R}(N');\md{A})
&=\inf\{i\geq 0\mid\Tor{i}{A}{N'}\neq 0\}\\
\label{prop100616b3}
\depth_{R}(\ann_{R}(N');N)
&=\inf\{i\geq 0\mid\Tor{i}{\md{N}}{N'}\neq 0\}.
\end{align}
\end{cor}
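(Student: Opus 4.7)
My approach is to derive Corollary~\ref{prop100616b} directly from Theorem~\ref{prop100616a} by means of Matlis duality, using the adjointness isomorphism
$\md{\Tor{i}{L}{L'}}\cong\Ext{i}{L}{\md{L'}}$
recorded in Remark~\ref{disc100602b}. Since the Matlis dual functor is faithful, this identification forces $\Tor{i}{L}{L'}=0$ if and only if $\Ext{i}{L}{\md{L'}}=0$, so the two infima of indices of nonvanishing agree.

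For equation~\eqref{prop100616b2} I would apply this principle with $(L,L')=(A,N')$, yielding $\inf\{i\geq 0\mid\Tor{i}{A}{N'}\neq 0\}=\inf\{i\geq 0\mid\Ext{i}{A}{\md{N'}}\neq 0\}$; the right-hand side equals $\depth_R(\ann_R(N');\md A)$ by equation~\eqref{prop100616a2}. Equation~\eqref{prop100616b3} is handled identically with $(L,L')=(\md N,N')$, appealing to equation~\eqref{prop100616a3}.

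Equation~\eqref{prop100616b1} is slightly more delicate because the naive translation via Remark~\ref{disc100602b} applied to $(A,\md{A'})$ produces $\md{\Tor{i}{A}{\md{A'}}}\cong\Ext{i}{A}{\mdd{A'}}$, and one must identify $\mdd{A'}$ with $A'$. The cleanest route is to invoke Theorem~\ref{prop100601b}\eqref{prop100601b2}: since $A'$ is artinian, Lemma~\ref{lem100206a} guarantees that $\mu^j_R(A')<\infty$ for every $j$, so Theorem~\ref{prop100601b}\eqref{prop100601b2} with $L=A'$ directly produces the isomorphism
$\Ext{i}{A}{A'}^v\cong\Tor{i}{A}{\md{A'}}$,
where $(-)^v=\Hom[\comp R]{-}{E}$. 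Because $(-)^v$ also detects vanishing faithfully, this converts the infimum on the Tor side into the infimum on the Ext side, and equation~\eqref{prop100616a1} finishes the argument.

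The only conceptual point to handle with care is the identification in case~\eqref{prop100616b1}, since $A'$ need not be Matlis reflexive over $R$. Passing to the $\comp R$-valued dual $(-)^v$ via Theorem~\ref{prop100601b}\eqref{prop100601b2} sidesteps this cleanly; all other steps are mechanical translations of Theorem~\ref{prop100616a} through the Matlis dual functor.
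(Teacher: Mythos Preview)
Your proof is correct and follows essentially the same approach as the paper. The paper also reduces to Theorem~\ref{prop100616a} by dualizing; for equation~\eqref{prop100616b1} it cites Corollary~\ref{cor100602a} (which is precisely Theorem~\ref{prop100601b}\eqref{prop100601b2} specialized to the mini-max case via Lemma~\ref{lem100206a}) to obtain $\Ext{i}{A}{A'}^v\cong\Tor{i}{A}{\md{A'}}$, and then says the remaining two equations are verified similarly---which amounts to your use of the adjointness isomorphism from Remark~\ref{disc100602b}.
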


\begin{proof}
We verify equation~\eqref{prop100616b1}; the others are verified similarly.

Since $\Ext{i}{A}{A'}\neq 0$ if and only if $\Hom[\comp R]{\Ext{i}{A}{A'}}{E}\neq 0$,
the isomorphism
$\Hom[\comp R]{\Ext{i}{A}{A'}}{E}\cong\Tor{i}{A}{\md{A'}}$
from
Corollary~\ref{cor100602a}
shows that
$$\inf\{i\geq 0\mid\Ext{i}{A}{A'}\neq 0\}=\inf\{i\geq 0\mid\Tor{i}{A}{\md{A'}}\neq 0\}.$$
Thus equation~\eqref{prop100616b1}
follows from~\eqref{prop100616a1}.
\end{proof}

\section{Examples}\label{sec100420a}

This section contains some explicit computations of Ext and Tor
for the classes of modules discussed in this paper.
Our first example
shows that $\Ext{i}{A}{A'}$ need not be mini-max over $R$.

\begin{ex} \label{ex100421a}
Let $k$ be a field, and set $R=k[X_1,\ldots,X_d]_{(X_1,\ldots,X_d)}$.
We show that $\Hom{E}{E}\cong \comp R$ is not mini-max over $R$.
Note that $R$ is countably generated over $k$, and 
$\comp R\cong k[\![X_1,\ldots,X_d]\!]$ is not countably generated over $k$.
So, $\comp R$ is not countably generated over $R$.
Also, every artinian $R$-module $A$ is 
a countable union of the finite length submodules $(0:_A\m^n)$,
so $A$ is
countably generated.
It follows that every mini-max $R$-module
is also countably generated.
Since $\comp R$ is not countably generated, it 
is not mini-max over $R$.
\end{ex}

Our next example describes $\Ext{i}{A}{A'}$  for some special cases.

\begin{ex}\label{ex100419a}
Assume that $\depth(R)\geq 1$,
and let $A$ be an artinian $R$-module.
Let $x\in\m$ be an $R$-regular element.
The map $E\xra{x}E$ is surjective since $E$ is divisible, and the kernel $(0:_Ex)$ is artinian,
being a submodule of $E$.
Using the injective resolution
$0\to  E\xra{x} E\to 0$ for $(0:_Ex)$,
one can check that
$$\Ext{i}{A}{(0:_Ex)}\cong\begin{cases}
(0:_{\md A} x) & \text{if $i=0$} \\
\md A/x\md A & \text{if $i=1$} \\
0 & \text{if $i\neq0,1$.} \end{cases}$$
For instance, in the case  $A=(0:_Ex)$,  the isomorphism
$\md{(0:_Ex)}\cong \comp R/x\comp R$ implies 
$$\Ext{i}{(0:_Ex)}{(0:_Ex)}\cong\begin{cases}
\comp R/x\comp R & \text{if $i=0,1$} \\
0 & \text{if $i\neq0,1$.} \end{cases}$$
On the other hand, if $x,y$ is an $R$-regular sequence, then
$\md{(0:_Ey)}\cong \comp R/y\comp R$; it follows that $x$ is $\md{(0:_Ey)}$-regular, so one has
$$\Ext{i}{(0:_Ey)}{(0:_Ex)}\cong\begin{cases}
\comp R/(x,y)\comp R & \text{if $i=1$} \\
0 & \text{if $i\neq 1$.} \end{cases}$$
\end{ex}

The next example shows that 
$\Ext{i}{A}{N}$ need not be  mini-max over $R$.

\begin{ex}\label{ex14}
Assume that $R$ is  Cohen-Macaulay with 
$d=\dim(R)$, and let $A$ be an artinian $R$-module.
Assume that $R$ admits a dualizing (i.e., canonical) module $D$.
(For instance, this is so when $R$ is Gorenstein, in which case $D=R$.)
A minimal injective resolution of $D$ has the form
$$J=\quad \textstyle 0\to\coprod_{\Ht(\p)=0}E_R(R/\p)\to\cdots\to\coprod_{\Ht(\p)=d-1}E_R(R/\p)\to E\to 0.$$
In particular, we have 
$\Gamma_{\m}(J)=(0\to 0\to 0\to\cdots\to 0\to E\to 0)$
where the copy of $E$ occurs in  degree $d$.
Since $\Hom{A}{J}\cong\Hom{A}{\Gamma_{\m}(J)}$, it follows that
$$\Ext{i}{A}{D}\cong\begin{cases}
\md A & \text{if $i=d$} \\
0 & \text{if $i\neq d$.} \end{cases}$$
Assume that $d\geq 1$, and let $x\in\m$ be an $R$-regular element.
It follows that the map $D\xra{x}D$ is injective, and the cokernel $D/xD$ is noetherian.
Consider the exact sequence 
$0\to D\xra{x} D\to D/xD\to 0$.
The long exact sequence associated to $\Ext{i}{A}{-}$ shows that
$$\Ext{i}{A}{D/xD}\cong\begin{cases}
(0:_{\md A}x) & \text{if $i=d-1$} \\
\md A/x\md A & \text{if $i=d$} \\
0 & \text{if $i\neq d-1,d$.} \end{cases}$$
As in Example~\ref{ex100419a}, we have
$\md{(0:_Ex)}\cong \comp R/x\comp R$ and
$$\Ext{i}{(0:_Ex)}{D/xD}\cong\begin{cases}
\comp R/x\comp R & \text{if $i=d-1,d$} \\
0 & \text{if $i\neq d-1,d$.} \end{cases}$$
Also, if $x,y$ is an $R$-regular sequence, then
$\md{(0:_Ey)}\cong \comp R/y\comp R$ and
$$
\Ext{i}{(0:_Ey)}{D/xD}\cong\begin{cases}
\comp R/(x,y)\comp R & \text{if $i=d$} \\
0 & \text{if $i\neq d$.} \end{cases}$$
\end{ex}

Next, we show that $\Tor{i}{A}{A'}$ need not be noetherian over $R$ or $\comp R$.

\begin{ex}\label{ex100420a}
Assume that $R$ is  Gorenstein and complete with 
$d=\dim(R)$.
(Hence $D=R$ is a dualizing $R$-module.)
Given two artinian $R$-modules $A$ and $A'$,
Theorem~\ref{thm100320b} implies that $\Tor{i}{A}{A'}$ is artinian, hence Matlis reflexive
for each index $i$, since $R$ is complete. 
This explains the first isomorphism below, and 
Remark~\ref{disc100602b} provides the second isomorphism:
\begin{align*}
\Tor{i}{A}{E}
&\cong\mdd{\Tor{i}{A}{E}}
\cong\md{\Ext{i}{A}{\md E}}
\cong\md{\Ext iAR}
\cong\begin{cases}
A & \text{if $i=d$} \\
0 & \text{if $i\neq d$.} \end{cases}
\end{align*}
Example~\ref{ex14} explains the fourth isomorphism.
Assume that $d\geq 1$, and let $x\in\m$ be an $R$-regular element.
Then $\md{(0:_Ex)}\cong R/xR$, so Example~\ref{ex14} implies that
\begin{gather*}\Tor{i}{A}{(0:_Ex)}
\cong\md{\Ext{i}{A}{\md{(0:_Ex)}}}\cong\begin{cases}
A/xA & \text{if $i=d-1$} \\
(0:_Ax) & \text{if $i=d$} \\
0 & \text{if $i\neq d-1,d$} \end{cases}
\\
\Tor{i}{(0:_Ex)}{(0:_Ex)}
\cong\begin{cases}
(0:_Ex) & \text{if $i=d-1,d$} \\
0 & \text{if $i\neq d-1,d$.} \end{cases}
\end{gather*}
On the other hand, if $x,y$ is an $R$-regular sequence, then
\begin{align*}\Tor{i}{(0:_Ey)}{(0:_Ex)}
&
\cong\begin{cases}
\md{(R/(x,y)R)}\cong E_{R/(x,y)R}(k) & \text{if $i=d$} \\
0 & \text{if $i\neq d$.} \end{cases}
\end{align*}
\end{ex}

Lastly, we provide an explicit computation of $\Otimes{E}{E}$.

\begin{ex}\label{ex100420b}
Let $k$ be a field and set $R=k[\![X,Y]\!]/(XY,Y^2)$.
This is the completion of the multi-graded ring
$R'=k[X,Y]/(XY,Y^2)$ with homogeneous maximal ideal $\m'=(X,Y)R'$.
The multi-graded structure on $R'$ is represented in the following diagram:
$$\xymatrix@=.5em{
&&&  &  &  &  &  \\
R'&&& \bullet &  &  &  &  \\
&&& \bullet \ar[uu]\ar[rrrrr] & \bullet & \bullet & \bullet & \bullet & \cdots}$$
where each bullet represents the corresponding monomial in $R'$.
It follows that 
$E\cong E_{R'}(k)\cong k[X^{-1}]\oplus k Y^{-1}$
with graded module structure given by the formulas
\begin{align*}
X\cdot 1&=0
&X\cdot X^{-n}&=X^{1-n}
&X\cdot Y^{-1}&=0
\\
Y\cdot 1&=0
&Y\cdot Y^{-1}&=1
&Y\cdot X^{-n}&=0
\end{align*}
for  $n\geq 1$.
Using this grading, one can show that
$\m E=\m'E\cong k[X^{-1}]$ and $\m^2E=\m E$. These modules are represented in the next diagrams:
$$\xymatrix@=.5em{
&\cdots & \bullet & \bullet & \bullet & \bullet & \bullet \ar[lllll] \ar[dd] &  \\
E&& & & & & \bullet &  \\
&&&&&&&&}
\qquad
\qquad
\xymatrix@=.5em{
&\cdots & \bullet & \bullet & \bullet & \bullet & \bullet \ar[lllll] \ar[dd] &  \\
\m E&& & & & &  &  \\
&& & & & & &}$$
It follows that $E/\m E\cong k$, so Lemma~\ref{lemma1} implies that
$$\Otimes{E}{E}\cong \Otimes{(E/\m E)}{(E/\m E)} \cong\Otimes{k}{k}\cong k.$$
A similar computation shows the following: Fix positive integers $a,b,c$ such that $c>b$, 
and consider the ring
$S=k[\![X,Y]\!]/(X^aY^b,Y^c)$ with maximal ideal $\n$ and $E_S=E_{S}(k)$.
Then $\n^{c-b}E_S=\n^{c-b+1}E_S$
and we get the following:
\begin{gather*}
E_S/\n^{c-b}E_S\cong S/(X^a,Y^{c-b})S\cong k[X,Y]/(X^a,Y^{c-b})\\
\Otimes[S]{E_S}{E_S}\cong \Otimes[S]{(E_S/\n^{c-b} E_S)}{(E_S/\n^{c-b} E_S)} \cong S/(X^a,Y^{c-b})S.
\end{gather*}
\end{ex}

\section*{Acknowledgments}
We are grateful to 
Luchezar Avramov,
C\u{a}t\u{a}lin Ciuperc\u{a},  
Edgar Enochs,
Srikanth Iyengar, and
Roger Wiegand
for useful feedback about this research.

\providecommand{\bysame}{\leavevmode\hbox to3em{\hrulefill}\thinspace}
\providecommand{\MR}{\relax\ifhmode\unskip\space\fi MR }
\providecommand{\MRhref}[2]{%
  \href{http://www.ams.org/mathscinet-getitem?mr=#1}{#2}
}
\providecommand{\href}[2]{#2}

\end{document}